\crefname{hypothesis}{Hypothesis}{Hypotheses}
\crefname{fact}{Fact}{Facts}
\title{ItsOPT: An inexact two-level smoothing framework for nonconvex optimization via high-order Moreau envelope\thanks{Submitted to the editors DATE.
\funding{The Research Foundation Flanders (FWO) research project G081222N and UA BOF DocPRO4 projects with ID 46929 and 48996 partially supported the paper's authors.}}}
\author{Alireza Kabgani\thanks{Department of Mathematics, University of Antwerp, Antwerp, Belgium.
  (\email{alireza.kabgani@uantwerp.be}, \email{masoud.ahookhosh@uantwerp.be}).}
\and Masoud Ahookhosh\footnotemark[2]}
\newcommand\R{\mathbb{R}}
\newcommand\Rinf{\overline{\mathbb{R}}}
\newcommand\inter[1]{ {\rm \textbf{int}}(#1)} 
\newcommand\dom[1]{ \bs{{\rm dom}}(#1)} 
\newcommand\Dom[1]{ \bs{{\rm Dom}}(#1)} 
\newcommand\dist{ \bs{{\rm dist}}} 
\newcommand\gf{\varphi} 
\newcommand\gh{\psi} 
\newcommand\fgam[3]{#1_{#3}^{#2}}
\newcommand\fgamepsk[3]{#1^{#2,\varepsilon_k}_{#3}}
\newcommand\fgamepsko[3]{#1^{#2,\varepsilon_{k+1}}_{#3}}
\newcommand\prox[3]{ \bs{{\rm prox}}_{#2#1}^{#3}}
\newcommand\ov[1]{\overline{#1}}
\newcommand\mb{\mathbf{B}}
\newcommand\bs[1]{\boldsymbol{#1}}
\newcommand\argmin[1]{\bs{\arg\min}_{#1}}
\newcommand\argmint[1]{\mathop{\bs{\arg\min}}\limits_{#1}}
\newcommand\Nz{\mathbb{N}_0}
\newcommand\fv{\widehat{\gf}}
\begin{document}

\maketitle

\begin{abstract}
This paper introduces ItsOPT, an {\it inexact two-level smoothing optimization framework} designed 
to find first-order critical points of nonsmooth and nonconvex functions. The framework consists of two levels of methodologies: at the upper level, a zeroth-, first-, or second-order method can be tailored to minimize a smooth approximation; at the lower level, the high-order proximal auxiliary problems are solved inexactly, generating an inexact oracle for the smooth function. As a smoothing technique, we introduce the high-order Moreau envelope (HOME)
and study its fundamental properties under standard assumptions. Next, by combining a boosted high-order proximal-point algorithm (Boosted HiPPA) at the upper level with the inexact oracle from the lower level, we obtain
a zeroth-order instance of ItsOPT. Global convergence rates are established under the Kurdyka-{\L}ojasiewicz (KL) property of the cost and envelope functions, together with reasonable conditions on the accuracy of the proximal terms. Surprisingly, for any KL exponent $\theta\in (0,1)$ of the original cost, setting the regularization order $p=\frac{1}{1-\theta}$ ensures that Boosted HiPPA converges linearly to a proximal fixed point. This is the first algorithm with this property for KL functions. Preliminary numerical experiments on a robust low-rank matrix recovery problem demonstrate the promising performance of the proposed algorithm, supporting our theoretical foundations. 
\end{abstract}

\begin{keywords}
Nonsmooth and nonconvex optimization, Smoothing technique, High-order Moreau envelope, Inexact high-order proximal operator, KL function,  Robust low-rank matrix recovery
\end{keywords}

\begin{MSCcodes}
90C26, 90C25, 90C06, 65K05, 49J52, 49J53
\end{MSCcodes}

\section{Introduction}\label{intro}

This paper deals with nonconvex optimization problems 
\begin{equation}\label{eq:mainproblem}
\mathop{\bs\min}\limits_{x\in \mathbb {R}^n}\  \gf(x),
\end{equation}
subject to the following basic assumptions.
\begin{assumption}[Basic assumptions]\label{ass:basic} In problem \eqref{eq:mainproblem}, we assume that
\begin{enumerate}[label=(\textbf{\alph*}), font=\normalfont\bfseries, leftmargin=0.7cm]
\item  $\gf: \R^n\to\Rinf:=\R\cup \{+\infty\}$ is a proper, lower semicontinuous (lsc), high-order prox-bounded function with a threshold $\gamma^{\gf, p}>0$ (see Definition~\ref{def:s-prox-bounded}), and is possibly nonsmooth;
\item the set of minimizers is nonempty, with the corresponding minimal value $\gf^*$.
\end{enumerate}
\end{assumption}
In the above problem, we emphasize that no special structure is imposed on the underlying function $\gf$, allowing it to encompass a wide range of optimization problems, including those where the objective function $\gf$ combines functions via summation, distance, fraction, or composition.

Solving nonsmooth optimization problems by tailoring iterative schemes based on smoothing techniques has a long history in optimization; see, e.g., \cite{Beck12, ben2006smoothing, bertsekas2009nondifferentiable, Bot2020, Bot15, Moreau65, nesterov2005smooth}, intending to develop methods that outperform classical approaches such as subgradient-based and bundle methods. The main idea is to replace the nonsmooth cost function with a smoothing counterpart that has the same set of minimizers, and then minimize this surrogate by smooth optimization approaches, which may consequently lead to faster schemes than classical nonsmooth methods. In particular, several smoothing techniques (in both Euclidean and non-Euclidean settings) have been studied for composite minimization; see, e.g., \cite{Ahookhosh21, patrinos2013proximal, Stella17, themelis2019acceleration,themelis2020douglas,Themelis18}.

Among all developments, the {\it Moreau envelope} has received significant attention due to its simplicity and elegant properties, dating back to the seminal work by Jean-Jacques Moreau in 1965 \cite{Moreau65}. 
The {\it proximal-point operator} and the {\it Moreau envelope} are defined as
\begin{equation*}\label{eq:prox2}
\prox{\gf}{\gamma}{} (x):=\argmint{y\in \R^n} \left(\gf(y)+\frac{1}{2\gamma}\Vert x- y\Vert^2\right),
    \fgam{\gf}{}{\gamma}(x):=\mathop{\bs{\inf}}\limits_{y\in \R^n} \left(\gf(y)+\frac{1}{2\gamma}\Vert x- y\Vert^2\right),
\end{equation*}
where the proximal operator has been used to generate the proximal-point algorithm (i.e., $x^{k+1}\in\mathop{\bs{\mathrm{prox}}}_{\gamma \varphi}(x^k)$); see, e.g., \cite{martinet1970breve, martinet1972determination, rockafellar1976monotone}.
If $\varphi$ does not have a complicated structure, the proximal-point operator admits a closed-form solution \cite{beck2017first}, which leads to efficient algorithms due to its {\it simple structure} and {\it low-memory requirement}; cf. \cite{Parikh14}. Moreover, the Moreau envelope shares the same minimizers as the original cost function $\varphi$ and enjoys favorable {\it differential properties} under {\it convexity} or {\it prox-regularity}; see, e.g., \cite{Bauschke17, Kabganitechadaptive, Kabganidiff, KecisThibault15, Poliquin96, Rockafellar09}. Consequently, both the proximal-point operator and the Moreau envelope have been standing as unavoidable tools for solving nonsmooth optimization problems.

Although extensive research has been conducted on proximal-point methods, most studies rely on the existence of a {\it closed-form solution} for the proximal-point operator, which holds only for a limited class of simple functions; see, e.g., \cite[Chapter~6]{beck2017first}. Such approaches fail 
in cases where $\varphi$ has a more complex structure. This has motivated the development of {\it inexact proximal-point methods}; see, e.g., 
\cite{Ahookhosh24,barre2023principled,Dvurechenskii2022,liu2012implementable,Nesterov2023a,rockafellar1976monotone,Salzo12,Solodov01} and references therein. 
In particular, Nesterov proposed a {\it bi-level unconstrained convex minimization (BLUM)} framework, introducing a novel high-order proximal-point operator using a $p$th-order regularization term
\begin{equation}\label{eq:proxp1}
    \bs{\mathrm{prox}}_{\varphi,H}^{p}(x):= \argmint{y\in \R^n} \left(\varphi(y)+\frac{H}{p} \|x-y\|^{p}\right),
\end{equation}
for a smooth function $\varphi$, $H> 0$, and $p\geq 2$; cf. \cite{nesterov2021inexact,Nesterov2023a}. This framework consists of two levels, where the upper level applies the inexact high-order proximal-point operator, and the lower level solves the underlying proximal problem inexactly. This structure has led to interesting complexity results; see, e.g., \cite{Ahookhosh24,Nesterov2023a,Nesterov2022,zhu2024global}. 
Recently, Ahookhosh and Nesterov extended this framework further to convex composite optimization in a {\it bi-level optimization framework} (BiOPT), where the related proximal auxiliary problem is solved inexactly by the Bregman proximal gradient method; cf. \cite{Ahookhosh23,Ahookhosh24}.

As a consequence of the definition for the Moreau envelope and other envelopes for composite problems (e.g., \cite{Ahookhosh21, patrinos2013proximal, Stella17,  themelis2019acceleration,themelis2020douglas,Themelis18}), an {\it exact oracle} for these functions exists only if a closed-form solution to the underlying proximal auxiliary problem is available. Otherwise, such an oracle cannot be generated. In practice, this requirement is often the most serious bottleneck in applying envelope functions, rendering them impractical for many interesting applications with complex cost functions. 
To the best of our knowledge, no study has addressed inexactness of proximal oracles for envelope functions. This motivates the quest to develop a {\it novel smoothing framework} based on envelope functions with {\it inexact proximal oracles}.  
Accordingly, this leads to an {\it inexact two-level smoothing optimization framework}, where at the upper level, a smooth optimization scheme with an inexact oracle is applied, and at the lower level, the corresponding proximal auxiliary problem is solved inexactly. The development of such a framework is the main motivation for our study, and its details will be described in the coming sections.

\subsection{Contribution and related works}
Our contributions are threefold:
\begin{description}[wide, labelwidth=!, labelindent=0pt]
    \item[(i)] {\bf Inexact Two-level Smoothing OPTimization (ItsOPT) framework.} 
     We introduce the ItsOPT framework, an inexact two-level smoothing approach for nonsmooth and nonconvex problems. It extends BLUM \cite{nesterov2021inexact} (for smooth convex problems) and BiOPT \cite{Ahookhosh24} (for nonsmooth convex problems) to a broader class. This framework employs a proximal envelope to smooth the nonsmooth cost function and consists of {\it two levels of methodologies}: (i) {\it upper-level methods} may be gradient-based, quasi-Newton, or generalized second-order (using the second-order generalized derivative tools \cite{mordukhovich2024second}) scheme; (ii) {\it the lower-level method} approximately solves the underlying high-order proximal auxiliary problem~\eqref{eq:proxp1}.
     
    \item[(ii)] {\bf Fundamental properties of HOPE and HOME.} 
  We define the {\it high-order proximal operator (HOPE)} and the {\it high-order Moreau envelope (HOME)} for nonsmooth and nonconvex problems, and establish their key properties.
Due to our optimization purpose, we characterize the {\it relationships among various first-order reference points}  of HOME and the original cost function. To study the convergence rate of algorithms generated to minimize HOME, we investigate the relationship between the {\it KL property for HOME} and the original cost function.  
Let us emphasize that {\it HOME does not need to know about the structure of the problem} as opposed to the (Bregman) forward-backward \cite{Ahookhosh21,patrinos2013proximal,Stella17,Themelis18}, Douglas-Rachford \cite{themelis2020douglas}, Davis-Yin \cite{liu2019envelope} envelopes that require Lipschitz smoothness (or relative smoothness) of the differentiable part of the cost function, which is a considerable advantage of HOME over these envelopes for many practical applications.

    \item[(iii)] {\bf Boosted HiPPA as a special case of ItsOPT.}
    As a first natural instance of ItsOPT, we introduce the {\it inexact high-order proximal-point algorithm (HiPPA)}, where the upper-level follows a generic proximal-point scheme with $p$th-order regularization ($p>1$) and the lower-level solves the proximal auxiliary problem inexactly. 
To further accelerate this algorithm, we propose the {\it boosted version (Boosted HiPPA)} and its adaptive counterpart by adopting a {\it derivative-free nonmonotone line search} in a similar vein as the schemes given in  \cite{fukushima1981generalized,fukushima1996,mine1981minimization}, where the {\it search direction is not necessarily a descent direction}. 
Since only approximate proximal operators are available, we substitute them into HOME to obtain an inexact zeroth-order oracle (inexact function value). By introducing {\it suitable inexactness conditions}, we design stopping criteria for the lower-level method to solve the proximal auxiliary problem.  We prove the well-definedness and subsequential convergence of Boosted HiPPA under standard assumptions, and establish global and local linear convergence under the KL property. {\it Surprisingly, if the original cost is a KL function with exponent $\theta \in (0, 1)$, by setting $p = \frac{1}{1-\theta}$, Boosted HiPPA converges linearly. To the best of our knowledge, Boosted HiPPA is the first algorithm achieving linear convergence for any KL function by leveraging a proper proximal regularization}. Finally, we report promising numerical results for robust low-rank matrix recovery problems.
\end{description}

\subsection{Paper organization}
In Section~\ref{sec:preliminaries}, we introduce the necessary notation and preliminaries.
Section~\ref{sec:itsopt} introduces the ItsOPT framework and deals with the fundamental properties of HOPE and HOME. 
Section~\ref{sec:boosted} describes HiPPA and Boosted HiPPA together with their convergence analysis. Section~\ref{sec:numerical} demonstrates the application of our algorithms to robust low-rank matrix recovery. We summarize, extend, and discuss the implications and limitations of our results in Section~\ref{sec:disc}.

\section{Preliminaries and notation} \label{sec:preliminaries}

In this paper, $\R^n$ denotes an $n$-dimensional \textit{Euclidean space} equipped with the \textit{Euclidean norm} $\Vert\cdot\Vert$, and $\langle\cdot, \cdot\rangle$ represents the standard \textit{inner product}. Additionally, $\mathbb{N}$ is the set of \textit{natural numbers}, and we set $\Nz:=\mathbb{N}\cup\{0\}$.
We denote the \textit{open ball} with center $\ov{x}\in \R^n$ and radius $r>0$ as $\mb(\ov{x}; r)$.
The \textit{interior} of a set $C\subseteq \R^n$ is denoted by $\inter{C}$.
The \textit{distance} from $x\in \R^n$ to a nonempty set $C\subseteq\R^n$ is defined as 
$\dist(x,C):=\bs\inf_{y\in C}\Vert y - x\Vert$.
For a sequence $\{x^k\}_{k\in \Nz}$, $\ov{x}$ is a \textit{limiting point} if $x^k\to \ov{x}$, and $\widehat{x}$ is a \textit{cluster point} of this sequence if there exist an infinite subset $J\subseteq \Nz$ and a subsequence $\{x^{j}\}_{j\in J}$ such that $x^j\to \widehat{x}$.

For  $\gh: \R^n \to \Rinf:=\R\cup\{+\infty\}$, 
the set $\dom{\gh}:= \{x \in   \R^{n}\mid~\gh(x)< + \infty \}$ denotes
the \textit{effective domain} of $\gh$ and it
is said to be \textit{proper} if $\dom{\gh}\neq \emptyset$.
The set of all minimizers of $\gh$ on $C\subseteq\R^n$ is denoted by $\argmin{x\in C}\gh(x)$.
The function $\gh$ is \textit{lower semicontinuous} (lsc) at $\ov{x} \in \R^{n}$ if, for any sequence $\{x^k\}_{k\in \Nz} \subseteq \R^{n}$ with $x^k \rightarrow \ov{x}$, we have $\bs\liminf_{k\rightarrow + \infty} \gh(x^k)\geq \gh(\ov{x})$. 
The function $\gh$ is referred to as lsc if it has this property at each $x \in \R^n$.
The domain of a set-valued mapping $T: \R^n \rightrightarrows \R^n$ is defined as $\Dom{T}:=\{x\in\R^n\mid T(x)\neq\emptyset\}$.
For $p > 1$, the gradient of $\frac{1}{p} \Vert x \Vert^p$ is given by $\nabla \left(\frac{1}{p} \Vert x \Vert^p \right) = \Vert x \Vert^{p-2} x$, adopting the convention $\frac{0}{0} = 0$ when $x = 0$.

The following inequalities will be frequently used in the coming sections.
\begin{lemma}[Basic inequalities]\label{lemma:ineq:inequality p} 
Assume that $a, b\in \R^n$.
\begin{enumerate}[label=(\textbf{\alph*}), font=\normalfont\bfseries, leftmargin=0.7cm]
\item \label{lemma:ineq:inequality p:ineq1} For each $\lambda\in (0,1)$ and $p\geq 1$, 
$\Vert a+b\Vert^p\geq \lambda^{p-1}\Vert a\Vert^p-\left(\frac{\lambda}{1-\lambda}\right)^{p-1}\Vert b\Vert^p$.
\item \label{lemma:ineq:inequality p:ineq4} For each $p\geq 1$, $\Vert a-b\Vert^p\leq 2^{p-1}\left(\Vert a\Vert^p+\Vert b\Vert^p\right)$.
\end{enumerate}
\end{lemma}
\begin{proof}
\ref{lemma:ineq:inequality p:ineq1} The convexity of $\Vert x\Vert^p$ implies that for any $\lambda\in (0, 1)$,
\begin{equation*}
\Vert a\Vert^p=\left\Vert \lambda \frac{a+b}{\lambda}+(1-\lambda)\frac{-b}{1-\lambda}\right\Vert^p\leq \lambda^{1-p}\Vert  a+b\Vert^p+(1-\lambda)^{1-p}\Vert b\Vert^p.
\end{equation*}
\ref{lemma:ineq:inequality p:ineq4} This follows by setting $\lambda=\frac{1}{2}$ and substituting $a-b$ for $a$ in Assertion~\ref{lemma:ineq:inequality p:ineq1}. 
\end{proof}
For two positive real numbers $a$ and $b$,  and for $p \in [0,1]$, it holds that
\begin{equation}\label{eq:intrp:p01}
(a+b)^p\leq a^p+b^p.
\end{equation}

 A proper function $\gh: \R^n \to \Rinf$ is said to be \textit{Fr\'{e}chet differentiable} at a point
 $\ov{x}\in\inter{\dom{\gh}}$ with \textit{Fr\'{e}chet derivative}  
$\nabla \gh(\ov{x})$
 if 
\begin{equation*}
\mathop{\bs\lim}\limits_{x\to \ov{x}}\frac{\gh(x) -\gh(\ov{x}) - \langle \nabla \gh(\ov{x}) , x - \ov{x}\rangle}{\Vert x - \ov{x}\Vert}=0.
\end{equation*}
The \textit{Fr\'{e}chet/regular} and \textit{Mordukhovich/limiting subdifferentials} of $\gh$ at $\ov{x}\in \dom{\gh}$ \cite{Mordukhovich2018,Rockafellar09} are given, respectively, by
\[
\widehat{\partial}\gh(\ov{x}):=\left\{\zeta\in \R^n\mid~\mathop{\bs\liminf}\limits_{x\to \ov{x}}\frac{\gh(x)- \gh(\ov{x}) - \langle \zeta, x - \ov{x}\rangle}{\Vert x - \ov{x}\Vert}\geq 0\right\},
\]
and
\begin{equation*}
\partial \gh(\ov{x}):=\left\{\zeta\in \R^n\mid~\exists x^k\to \ov{x}, \zeta^k\in \widehat{\partial}\gh(x^k),~~\text{with}~~\gh(x^k)\to \gh(\ov{x})~\text{and}~ \zeta^k\to \zeta\right\}.
\end{equation*}

The Kurdyka-\L{}ojasiewicz property is crucial for studying global and linear convergence
of many algorithms; see, e.g., \cite{absil2005convergence,Ahookhosh21,attouch2010proximal,Attouch2013,Bolte2007Clarke,bolte2007lojasiewicz,Bolte2014,Li18,li2023convergence,Yu2022}.
\begin{definition}[Kurdyka-\L{}ojasiewicz property]\label{def:kldef}
Let $\gh: \R^n \to \Rinf$ be a proper lsc function. Then $\gh$ satisfies the \textit{Kurdyka-\L{}ojasiewicz (KL) property} at $\ov{x}\in\Dom{\partial \gh}$ if there exist constants $r>0$, $\eta\in (0, +\infty]$, and a desingularizing function $\phi$ such that
\begin{equation}\label{eq:intro:KLabs}
\phi'\left(\vert\gh(x) - \gh(\ov{x})\vert\right)\dist\left(0, \partial \gh(x)\right)\geq 1,
\end{equation}
whenever $x\in \mb(\ov{x}; r)\cap\Dom{\partial \gh}$ with $0<\vert \gh(x)-\gh(\ov{x})\vert<\eta$, where the function
\linebreak$\phi:[0, \eta)\to[0, +\infty)$ is a concave, continuous function satisfying $\phi(0)=0$, is continuously differentiable on $(0, \eta)$, and satisfies $\phi'>0$ on $(0, \eta)$.
\\
We further say that $\gh$ satisfies the KL property at $\ov{x}$ with an \textit{exponent} $\theta$ if $\phi(t) = c t^{1 - \theta}$ for some $\theta\in [0, 1)$ and a constant $c>0$.
\end{definition}
\begin{remark}\label{rem:KLwabsval}
In the classical KL property setting  \cite{attouch2010proximal}, the inequality \eqref{eq:intro:KLabs} is stated without the absolute value, under the condition $\gh(\ov{x})<\gh(x)<\gh(\ov{x})+\eta$
whenever 
\linebreak$x\in \mb(\ov{x}; r)\cap \Dom{\partial \gh}$. 
This condition is often satisfied near a critical point for monotone algorithms but may fail for nonmonotone ones, making Definition~\ref{def:kldef} more suitable. Recently, Li et al. \cite{li2023convergence} discussed the advantages of this definition for continuously differentiable functions, noting that semialgebraic and subanalytic functions satisfy this property; see also \cite[Corollary~12]{Bolte2007Clarke}.
\end{remark}
We say that $\gh$ satisfies the KL property at $\ov{x} \in \Dom{\partial \gh}$ with the {\it quasi-additivity property}
if, for some constant $c_{\phi} > 0$, the desingularizing function $\phi$ satisfies 
\begin{equation*}
\left[\phi'(x+y)\right]^{-1}\leq c_\phi \left[\left(\phi'(x)\right)^{-1}+\left(\phi'(y)\right)^{-1}\right], \qquad\forall x, y\in (0, \eta)~\text{with}~ x+y<\eta.
\end{equation*}
If $\gh$ satisfies the KL property at $\ov{x}$ with an exponent $\theta$, then $\gh$ also satisfies the KL property with the quasi-additivity property, with $c_{\phi} = 1$ \cite[Page 1098]{li2023convergence}.

In the following, we state the uniformized KL property.
\begin{lemma}[Uniformized KL property]\label{lem:Uniformized KL property}
Let $\gh:\R^n\to \Rinf$ be a proper lsc function that is constant on a nonempty compact set $C\subseteq \Dom{\partial \gh}$.
If $\gh$ satisfies the KL property with the quasi-additivity property at each point of $C$, then there exist $r>0$, $\eta>0$, and a desingularizing function $\phi$, which satisfies the quasi-additivity property, such that for all $\ov{x}\in C$ and $x\in X$, where
\begin{equation*}
X:=\Dom{\partial \gh}\cap\left\{x\in \R^n \mid \dist(x, C)<r\right\}\cap\left\{x\in \R^n \mid 0<\vert \gh(x) - \gh(\ov{x})\vert<\eta\right\},
\end{equation*}
we have
$
\phi'\left(\vert \gh(x) - \gh(\ov{x})\vert\right)\dist\left(0, \partial \gh(x)\right)\geq 1.
$
\end{lemma}
\begin{proof}
The proof follows the same approach as the smooth version of this theorem \cite[Lemma~3.5]{li2023convergence}. We provide a brief sketch. Let $\gh(x)=c$  on $C$ for some constant $c$. Assume that $\{\ov{x}^i\}_{i\in M:=\{1, \ldots, m\}}\subseteq C$ and radii $r_i>0$, $i\in M$, are such that 
$\gh$ has the KL property on $\mb(\ov{x}^i; r_i)$ and $C\subseteq \bigcup_{i\in M}\mb(\ov{x}^i; r_i)$, which is possible due to the compactness of $C$. For each $i\in M$, there exists $\eta_i\in(0, +\infty]$ and a desingularizing function $\phi_i:[0, \eta_i)\to [0, +\infty)$ with the properties given in 
Definition~\ref{def:kldef}, satisfying \eqref{eq:intro:KLabs}. That is, whenever $x\in \mb(\ov{x}^i; r_i)\cap\Dom{\partial \gh}$ with $0<\vert \gh(x)-\gh(\ov{x}^i)\vert<\eta_i$, we have
$\phi'_i\left(\vert \gh(x) -c\vert\right)\dist\left(0, \partial \gh(x)\right)\geq 1$.
By choosing $r > 0$ sufficiently small so that
$
U_r := \left\{ x \in \mathbb{R}^n : \dist(x, C) \leq r \right\} \subseteq \bigcup_{i\in M}\mb(\ov{x}^i; r_i),
$
and setting $\eta =\bs\min_{i\in M}\eta_i$ and $\phi(t) :=\sum_{i\in M}\phi_i(t)$ on $[0, \eta)$,  we obtain for all
$x \in X$,
\begin{equation*}
\phi'(\vert \gh(x) - c\vert) \dist(0, \partial \gh(x)) = \sum_{i\in M} \phi_i'(\vert \gh(x) - c\vert) \dist(0, \partial \gh(x)) \geq 1.
\end{equation*}
The argument that $\phi$ satisfies the quasi-additivity property is identical to the second step of the proof of \cite[Lemma~3.5]{li2023convergence}.
\end{proof}



\section{ItsOPT: inexact two-level smoothing framework} \label{sec:itsopt}
Let us consider the minimization problem \eqref{eq:mainproblem} with cost function $\gf$.
Smoothing methods via envelope functions $\gf_\gamma$ require a closed-form solution to the proximal auxiliary problem, which may not exist for complex cost functions or regularization terms, thereby limiting their applicability. Alternatively, inexact solutions can be computed under specific inexactness criteria, although they cannot directly generate zeroth-, first-, or second-order oracles for $\gf_\gamma$. However, they can provide an inexact oracle $\mathcal{O}(\varphi_\gamma, x)$.

Based on the above discussion, the design of a generic algorithm for \eqref{eq:mainproblem} involves two main steps:
(i) computing an inexact solution of the proximal auxiliary problem to determine an inexact oracle for the envelope $\varphi_\gamma$; 
(ii) developing a zeroth-, first-, or generalized second-order method that depends on the computed inexact oracle. 
This leads to an inexact two-level smoothing framework (ItsOPT), where the upper level employs 
zeroth-order (e.g., proximal point method with line search),
first-order (e.g., gradient, spectral gradient, conjugate gradient), or second-order (e.g., quasi-Newton, trust-region) methods, while the lower level inexactly solves the proximal auxiliary problem using low-cost methods (e.g., subgradient \cite{Davis2018, Rahimi2024}, Bregman gradient \cite{Ahookhosh24}, BELLA \cite{Ahookhosh21}). An instance of this framework is summarized in the following algorithm.

\begin{algorithm}
\caption{ItsOPT (Inexact two-level smoothing OPTimization algorithm)}\label{alg:itsopt}
\begin{algorithmic}[1]
\STATE\label{alg:itsopt:env}\textbf{Smoothing technique} Choose an envelope to generate a smooth variant of $\gf$;
\STATE \textbf{Initialization} Start with $x^0\in \R^n$, $\gamma>0$, and set $k=0$;
\WHILE{stopping criteria are not satisfied}
\STATE Find an approximated solution of the proximal auxiliary problem; \COMMENT{L-level}
\STATE\label{alg:itsopt:orac} Generate the inexact 
zeroth-, first- or second-order oracle $\mathcal{O}(\varphi_\gamma,x^k)$; \COMMENT{L-level}
\STATE\label{alg:itsopt:dir}Choose a direction $d^k\in \R^n$ using the inexact oracle $\mathcal{O}(\varphi_\gamma,x^k)$; \COMMENT{U-level}
\STATE Determine the step-size $\alpha_k>0$ and update $x^{k+1}:=x^k+\alpha_k d^k$; \COMMENT{U-level}
\STATE $k=k+1$;
\ENDWHILE
\end{algorithmic}
\end{algorithm}
In Step~\ref{alg:itsopt:env} of Algorithm~\ref{alg:itsopt}, any proximal envelope can be applied, such as the Moreau envelope \cite{Moreau65}, the high-order Moreau envelope (see Definition~\ref{def:Hiorder-Moreau env}), the forward-backward envelope \cite{patrinos2013proximal,Stella17,Themelis18}, the Bregman Moreau envelope \cite{Ahookhosh21}, the Bregman forward-backward envelope \cite{Ahookhosh21}, the Douglas-Rachford envelope \cite{themelis2020douglas}, the Davis-Yin envelope \cite{liu2019envelope} and their variants. Furthermore, various approaches exist for defining inexactness within this framework; see, e.g., \cite{Ahookhosh24, nesterov2021inexact,rockafellar1976monotone, Salzo12} and the references therein. 

Moreover, the direction $d^k$ in Step~\ref{alg:itsopt:dir} of Algorithm~\ref{alg:itsopt} depends on the oracle chosen in Step~\ref{alg:itsopt:orac}, which may lead to either a zeroth-, first- or generalized second-order method at the upper level of the algorithm. In this paper, we focus on the high-order Moreau envelope as a smoothing technique, combined with a generic proximal-point method, to present the first zeroth-order natural algorithm within the ItsOPT framework. This algorithm will be described in detail in Section~\ref{sec:boosted}. For a first-order approach, see \cite{Kabganitechadaptive}.

In the remainder of this section, we introduce the high-order proximal operator and the high-order Moreau envelope, alongside their key properties that are vital for developing the proposed methods. We also present a framework for constructing an inexact oracle for the high-order Moreau envelope.

\subsection{High-order Moreau envelope in the nonconvex setting}
\label{subsec:home}

Let us begin with the definitions of HOPE and HOME by considering a general power  $p>1$.
\begin{definition}[High-order proximal operator and Moreau envelope]\label{def:Hiorder-Moreau env}
Let $p>1$  and $\gamma>0$, and let $\gf: \R^n \to \Rinf$ be a proper function. 
    The \textit{high-order proximal operator} (\textit{HOPE}) of $\gf$,
    $\prox{\gf}{\gamma}{p}: \R^n \rightrightarrows \R^n$, is defined by
   \begin{equation}\label{eq:Hiorder-Moreau prox}
       \prox{\gf}{\gamma}{p} (x):=\argmint{y\in \R^n} \left(\gf(y)+\frac{1}{p\gamma}\Vert x- y\Vert^p\right).
    \end{equation}     
  The \textit{high-order Moreau envelope} (\textit{HOME}) of $\gf$, 
    $\fgam{\gf}{p}{\gamma}:\R^n\to \R\cup\{\pm \infty\}$, 
    is defined by
    \begin{equation}\label{eq:Hiorder-Moreau env}
    \fgam{\gf}{p}{\gamma}(x):=\mathop{\bs{\inf}}\limits_{y\in \R^n} \left(\gf(y)+\frac{1}{p\gamma}\Vert x- y\Vert^p\right).
    \end{equation}
\end{definition}

\begin{fact}[Domain and majorizer for HOME]\cite[Proposition~12.9]{Bauschke17}\label{fact:horder:Bauschke17:p12.9}
Let $p> 1$ and $\gf:\R^n \to \Rinf$ be a proper function. Then, for every $\gamma > 0$,
$\fgam{\gf}{p}{\gamma}(x)<\infty$ for all $x\in \R^n$.
Additionally, for all $\gamma_2>\gamma_1>0$ and $x\in \R^n$, $\fgam{\gf}{p}{\gamma_2}(x)\leq \fgam{\gf}{p}{\gamma_1}(x)\leq \gf(x)$.
\end{fact}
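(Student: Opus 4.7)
The plan is to verify the two claims separately, each by a direct comparison argument built from the definition of the infimum; no heavy machinery is needed.

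For the first claim, $\dom{\fgam{\gf}{p}{\gamma}}=\R^n$, I would exploit properness of $\gf$ to fix some $y_0\in\dom{\gf}$, so that $\gf(y_0)<+\infty$. Then, for an arbitrary $x\in\R^n$, the trial point $y=y_0$ in the infimum~\eqref{eq:Hiorder-Moreau env} gives
\[
\fgam{\gf}{p}{\gamma}(x)\leq \gf(y_0)+\frac{1}{p\gamma}\Vert x-y_0\Vert^p<+\infty,
\]
which is enough to place $x$ in $\dom{\fgam{\gf}{p}{\gamma}}$ (the domain being characterized by the condition $\fgam{\gf}{p}{\gamma}(x)<+\infty$).

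For the monotonicity chain, I would first establish the right inequality $\fgam{\gf}{p}{\gamma_1}(x)\leq \gf(x)$ by plugging in the trial point $y=x$ into the infimum defining HOME; since $\Vert x-x\Vert^p=0$, the objective evaluates to $\gf(x)$, and the infimum can only be smaller. For the left inequality, the key observation is that $\gamma_2>\gamma_1>0$ implies $\tfrac{1}{p\gamma_2}<\tfrac{1}{p\gamma_1}$, so the pointwise (in $y$) bound
\[
\gf(y)+\frac{1}{p\gamma_2}\Vert x-y\Vert^p\leq \gf(y)+\frac{1}{p\gamma_1}\Vert x-y\Vert^p
\]
holds for every $y\in\R^n$ (the case $\gf(y)=+\infty$ is handled by the convention $\infty-\infty=\infty$ adopted earlier, but since both sides are $+\infty$ in that case, the inequality is trivial). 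Taking the infimum over $y\in\R^n$ on both sides yields $\fgam{\gf}{p}{\gamma_2}(x)\leq \fgam{\gf}{p}{\gamma_1}(x)$.

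There is essentially no obstacle here: the statement is an immediate consequence of the definition of the infimum together with the monotonicity of the coefficient $\tfrac{1}{p\gamma}$ in $\gamma$. The only point worth flagging is that the claim is cited from~\cite{Bauschke17} for $p=2$ and needs a one-line check that the arguments carry over verbatim for general $p>1$; since the power $p$ does not intervene in either step above (only the positivity of the coefficient does), the extension is automatic.
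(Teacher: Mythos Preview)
Your proposal is correct and follows the natural direct argument; the paper itself does not prove this statement but simply records it as a Fact with a citation to \cite[Proposition~12.9]{Bauschke17}, so there is no paper proof to compare against. Your verification that the argument for $p=2$ extends verbatim to general $p>1$ is exactly the one-line check the paper implicitly relies on.
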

From Fact~\ref{fact:horder:Bauschke17:p12.9}, $\fgam{\gf}{p}{\gamma}(x)<\infty$ for all $x\in \R^n$. 
To avoid cases where $\fgam{\gf}{p}{\gamma}(x) = -\infty$ for each $\gamma>0$ and $x \in \R^n$, we generalize the prox-boundedness (\cite[Definition~1.23]{Rockafellar09})
 to the case $p > 1$.

\begin{definition}[High-order prox-boundedness]\label{def:s-prox-bounded}
A function $\gf:\R^n\to \Rinf$ is said to be \textit{high-order prox-bounded} with order $p$, for a given $p> 1$, if there exist $\gamma>0$ and $x\in \R^n$ such that
$\fgam{\gf}{p}{\gamma}(x)>-\infty$. 
The supremum of the set of all such $\gamma$ is denoted by $\gamma^{\gf, p}$ and is referred to as the \textit{threshold of high-order prox-boundedness} for $\gf$.
\end{definition}

Lower-bounded and also convex functions satisfy Definition~\ref{def:s-prox-bounded} with $\gamma^{\gf, p} = +\infty$.

We now investigate conditions ensuring the nonemptiness and outer semicontinuity of $\prox{\gf}{\gamma}{p}$, as well as the continuity and finiteness of $\fgam{\gf}{p}{\gamma}$. In the following theorem, \textit{uniform level boundedness} plays a key role (see \cite[Definition~1.16]{Rockafellar09}).
Similar results for $p=2$ can be found in \cite[Theorem~1.25]{Rockafellar09}.

 \begin{theorem}[Basic properties of HOME and HOPE]\label{th:level-bound+locally uniform}
Let $p>1$ and let 
\linebreak$\gf: \R^n\to\Rinf$ be a proper lsc function that is high-order prox-bounded with threshold $\gamma^{\gf, p}$. Then, for each $\gamma\in (0, \gamma^{\gf, p})$, the following statements hold:
\begin{enumerate}[label=(\textbf{\alph*}), font=\normalfont\bfseries, leftmargin=0.7cm]
\item \label{level-bound+locally uniform:levelunif} the function $\Phi(y, x):=\gf(y)+\frac{1}{p\gamma}\Vert x- y\Vert^p$  is
 level-bounded in $y$ locally uniformly in $x$, and lsc in $(y, x)$;
\item \label{level-bound+locally uniform:proxnonemp} $\prox{\gf}{\gamma}{p}(x)$ is nonempty and compact, and $\fgam{\gf}{p}{\gamma}(x)$ is finite for every $x\in \R^n$;
\item \label{level-bound+locally uniform:cononx} $\fgam{\gf}{p}{\gamma}$ is continuous on $\R^n$;

\item \label{level-bound+locally uniform2:conv} if $y^k\in \prox{\gf}{\gamma_k}{p}(x^k)$ with $x^k\to \ov{x}$ and $\gamma_k\to \gamma\in (0, \gamma^{\gf, p})$, then the sequence $\{y^k\}_{k\in \mathbb{N}}$ is bounded and all cluster points of this sequence lie in $\prox{\gf}{\gamma}{p}(\ov{x})$.
     \end{enumerate}
 \end{theorem} 
 \begin{proof}
From high-order prox-boundedness, there exist $\gamma$, $\ov{x}$, and $\ell_0\in \R$ such that
 \begin{equation*}
  \gf(y)+\frac{2^{p-1}}{p\gamma}\left(\Vert \ov{x}\Vert^p+\Vert y\Vert^p\right)\geq  \gf(y)+\frac{1}{p\gamma}\Vert \ov{x}- y\Vert^p\geq \ell_0, \quad \forall y\in \R^n.
  \end{equation*}
Thus, $\gf(y)+\frac{2^{p-1}}{p\gamma}\Vert y\Vert^p\geq \ell_0 -\frac{2^{p-1}}{p\gamma}\Vert \ov{x}\Vert^p$, for all $y\in \R^n$. Leaving $\ell:=\frac{2^{p-1}}{p\gamma}$, this means that $\gf(\cdot)+\ell\Vert \cdot\Vert^p$ is bounded from below on $\R^n$.
We now prove each assertion.
\\
$\ref{level-bound+locally uniform:levelunif}$ 
Suppose, for contradiction, that there exists $\gamma\in (0, \gamma^{\gf, p})$ such that $\Phi(y, x)$ is not
 level-bounded in $y$ locally uniformly in $x$. Then there exist $\ov{x}\in \R^n$, $\ell_1\in \R$, sequences $x^k\to \ov{x}$, and $\{y^k\}_{k\in \mathbb{N}}$ with $\Vert y^k\Vert\to \infty$ such that, for each $k\in \mathbb{N}$,
\begin{equation}\label{eq1:level-bound+locally uniform}
\Phi(y^k, x^k)=\gf(y^k)+\frac{1}{p\gamma}\Vert x^k- y^k\Vert^p\leq \ell_1.
\end{equation}
For any $\widehat{\gamma}\in (\gamma, \gamma^{\gf, p})$, there exist $\widehat{x}\in\dom{\gf}$ and $\ell_2>-\infty$ such that for each $k\in \mathbb{N}$,
$\gf(y^k)+\frac{1}{p\widehat{\gamma}}\Vert \widehat{x}- y^k\Vert^p\geq \ell_2$. Together with \eqref{eq1:level-bound+locally uniform}, this implies
\begin{equation*}
\frac{1}{p\gamma}\Vert x^k- y^k\Vert^p-\frac{1}{p\widehat{\gamma}}\Vert \widehat{x}-y^k\Vert^p\leq \ell_1 -\ell_2.
\end{equation*}
Dividing by $\Vert y^k \Vert^p$ and letting $k \to +\infty$ yields
$
0<\frac{1}{p\gamma}-\frac{1}{p\widehat{\gamma}}\leq 0,
$
a contradiction.
The lower semicontinuity of $\Phi(x, y)$ follows from that of $\gf$.
\\
$\ref{level-bound+locally uniform:proxnonemp}$ 
This follows directly from $\dom{\fgam{\gf}{p}{\gamma}} = \R^n$ and \cite[Theorem~1.17(a)]{Rockafellar09}.\\
$\ref{level-bound+locally uniform:cononx}$ For given $\ov{x}\in \R^n$ and $\ov{y}\in \prox{\gf}{\gamma}{p}(\ov{x})$, the function $\Phi(\ov{y}, x)$ is continuous in $x$. It then follows from \cite[Theorem~1.17~(c)]{Rockafellar09} that $\fgam{\gf}{p}{\gamma}$ is continuous on $\R^n$.
\\
$\ref{level-bound+locally uniform2:conv}$ 
Defining $\Phi(y, x, \gamma) := \gf(y) + \frac{1}{p\gamma}\Vert x-y\Vert^p$, one obtains an argument analogous to Assertion~$\ref{level-bound+locally uniform:cononx}$ showing that $\fgam{\gf}{p}{\gamma}$ depends continuously on $(x,\gamma)$ in $\R^n\times (0, \gamma^{\gf, p})$.
The result then follows from \cite[Theorem~1.17~(b)]{Rockafellar09} and the continuity of $\fgam{\gf}{p}{\gamma}$.
\end{proof}


We now establish favorable relationships between the critical points of $\gf$ and those of $\fgam{\gf}{p}{\gamma}$, which pave the way for algorithmic developments. We begin by recalling some necessary definitions.
  
\begin{definition}[Reference points]\label{def:critic}
Let $p>1$ and $\gh: \R^n \to \Rinf$ be a proper lsc function, and let $\ov{x}\in \dom{\gh}$. Then, $\ov{x}$ is called
\begin{enumerate}[label=(\textbf{\alph*}), font=\normalfont\bfseries, leftmargin=0.7cm]
  \item \label{def:critic:f} a \textit{Fr\'{e}chet critical point} if $0\in \widehat{\partial}\gh(\ov{x})$, denoted by $\ov{x}\in \bs{\rm Fcrit}(\gh)$;
 \item \label{def:critic:m} a \textit{Mordukhovich critical point} if $0\in \partial \gh(\ov{x})$, denoted by $\ov{x}\in \bs{\rm Mcrit}(\gh)$;
   \item \label{def:critic:pfix} a \textit{proximal fixed point} if $\ov{x}\in \prox{\gh}{\gamma}{p}(\ov{x})$, denoted by $\ov{x}\in \bs{\rm Fix}(\prox{\gh}{\gamma}{p})$.
\end{enumerate}
\end{definition}

In the following theorem, we examine the relationships among these notions.
\begin{theorem}[Relationships among reference points]\label{prop:relcrit}
Let $p>1$ and let 
$\gf: \R^n \to \Rinf$ be a proper lsc function. The following statements hold:
\begin{enumerate}[label=(\textbf{\alph*}), font=\normalfont\bfseries, leftmargin=0.7cm]
 \item \label{prop:relcrit:inf:arg:eqinf} For each $\gamma>0$, $\bs\inf_{y\in \R^n}  \fgam{\gf}{p}{\gamma}(y)=\bs\inf_{y\in \R^n} \gf(y)$;
 \item \label{prop:relcrit:inf:arg:argmin} For $\gamma>0$,
 if $\prox{\gf}{\gamma}{p} (x)\neq \emptyset$ for all $x\in \R^n$ and $\bs\inf_{y\in\R^n} \gf(y)\neq -\infty$, then
  \begin{equation*}
  \argmint{y\in \R^n} \fgam{\gf}{p}{\gamma}(y)=\argmint{y\in \R^n} \gf(y);
  \end{equation*}

\item \label{prop:relcrit:FcritMcritfix}
 If $\gf$ is high-order prox-bounded with threshold $\gamma^{\gf, p}>0$  and $\bs\inf_{y\in\R^n} \gf(y)\neq -\infty$, then for each $\gamma\in (0, \gamma^{\gf, p})$, 
\begin{align*}
\argmint{x\in \R^n}\gf(x)\subseteq \bs{\rm Fcrit}(\fgam{\gf}{p}{\gamma})\subseteq \bs{\rm Mcrit}(\fgam{\gf}{p}{\gamma})
&\subseteq \bs{\rm Fix}(\prox{\gf}{\gamma}{p})
\\&\subseteq\bs{\rm Fcrit}(\gf)\subseteq \bs{\rm Mcrit}(\gf).
 \end{align*}
\end{enumerate}
\end{theorem}
\begin{proof}
 $\ref{prop:relcrit:inf:arg:eqinf}$  This follows from \cite[Propositions~12.9]{Bauschke17}.\\
$\ref{prop:relcrit:inf:arg:argmin}$  
From
\cite[Theorem~2.3]{Attouch1989}, we have $\argmin{y\in \R^n} \gf(y)\subseteq \argmin{y\in \R^n} \fgam{\gf}{p}{\gamma}(y)$.
Suppose $\ov{x}\in \argmin{y\in \R^n} \fgam{\gf}{p}{\gamma}(y)$ and let
$\ov{y}\in \prox{\gf}{\gamma}{p} (\ov{x})$. By Assertion~$\ref{prop:relcrit:inf:arg:eqinf}$,
\begin{equation*}
\gf(\ov{y})+\frac{1}{p\gamma}\Vert \ov{x}-\ov{y}\Vert^p=\fgam{\gf}{p}{\gamma}(\ov{x})=\mathop{\bs{\inf}}\limits_{y\in \R^n}  \fgam{\gf}{p}{\gamma}(y)=\mathop{\bs{\inf}}\limits_{y\in \R^n} \gf(y)\leq \gf(\ov{y})\leq \gf(\ov{y})+\frac{1}{p\gamma}\Vert \ov{x}-\ov{y}\Vert^p.
\end{equation*}
Hence, $\ov{x} = \ov{y}$ and $\argmin{y\in \R^n} \fgam{\gf}{p}{\gamma}(y)\subseteq \argmin{y\in \R^n} \gf(y)$, concluding~$\ref{prop:relcrit:inf:arg:argmin}$.
\\
$\ref{prop:relcrit:FcritMcritfix}$
If $\ov{x}\in \argmin{x\in \R^n}\gf(x)$, then invoking Theorem~\ref{th:level-bound+locally uniform} $\ref{level-bound+locally uniform:proxnonemp}$ and Assertion~$\ref{prop:relcrit:inf:arg:argmin}$ implies that $\ov{x}\in \argmin{y\in \R^n} \fgam{\gf}{p}{\gamma}(y)$. 
Hence, by  \cite[Theorem~10.1]{Rockafellar09}, $0\in \widehat{\partial}\fgam{\gf}{p}{\gamma}(\ov{x})$, i.e., 
 $\ov{x}\in \bs{\rm Fcrit}(\fgam{\gf}{p}{\gamma})\subseteq \bs{\rm Mcrit}(\fgam{\gf}{p}{\gamma})$.
 If  $\ov{x}\in \bs{\rm Mcrit}(\fgam{\gf}{p}{\gamma})$ and $\ov{y}\in \prox{\gf}{\gamma}{p}(\bar{x})$, by \cite[Lemma~3.1]{KecisThibault15}, $\ov{x}=\ov{y}$ (see also Remark~\ref{rem:onassum:eps}~$\ref{rem:onassum:eps:b}$). Thus, $\ov{x}\in \bs{\rm Fix}(\prox{\gf}{\gamma}{p})$.
Finally, if $\ov{x}\in  \bs{\rm Fix}(\prox{\gf}{\gamma}{p})$, then by  \cite[Exercise~8.8~\&~Theorem~10.1]{Rockafellar09}, $0\in \widehat{\partial}\gf(\ov{x})$. Hence, $\ov{x}\in \bs{\rm Fcrit}(\gf)\subseteq \bs{\rm Mcrit}(\gf)$.
\end{proof}
 Regarding some reverse relations in Theorem~\ref{prop:relcrit}, we present the following remark.
 \begin{remark}\label{rem:revofrel}
Let us consider the assumptions of Theorem~\ref{prop:relcrit}.
\begin{enumerate}[label=(\textbf{\alph*}), font=\normalfont\bfseries, leftmargin=0.7cm]
\item \label{rem:revofrel:a}
It may happen that $\bs{\rm Fix}(\prox{\gf}{\gamma}{p}) \nsubseteq  \argmin{x \in \R^n} \gf(x)$: For $p=2$, $\gamma=0.5$, and $\gf(x) = \cos(x)$, we have $0 \in \bs{\rm Fix}(\prox{\gf}{\gamma}{p})$, but $0 \notin \argmin{x \in \R^n} \gf(x)$.
  
  \item \label{rem:revofrel:b} It may also happen that $\bs{\rm Fcrit}(\gf) \nsubseteq \bs{\rm Fix}(\prox{\gf}{\gamma}{p})$: See \cite[Example~3.6]{Themelis18}.
 
  \end{enumerate}
  \end{remark}


To facilitate the global and linear convergence of methods for minimizing $\fgam{\gf}{p}{\gamma}$, we next investigate the relation between the KL property of $\fgam{\gf}{p}{\gamma}$ and that of $\gf$.

\begin{theorem}[KL property of HOME]\label{th:KLp:hope}
Let $p>1$ and let $\gf: \R^n \to \Rinf$ be a proper lsc function. If $\gf$  satisfies the KL property with an exponent $\theta\in \left[\frac{p-1}{p}, 1\right)$, then, 
\begin{enumerate}[label=(\textbf{\alph*}), font=\normalfont\bfseries, leftmargin=0.7cm]
\item \label{th:KLp:hope:a} $\Phi(x, y):=\gf(y)+\frac{1}{p\gamma}\Vert x - y\Vert^{p}$ satisfies the  KL property with  exponent $\theta$;
\item \label{th:KLp:hope:b} If  $\gf$ is high-order prox-bounded with threshold $\gamma^{\gf, p}>0$, then for any $\gamma\in (0, \gamma^{\gf, p})$, the function $\fgam{\gf}{p}{\gamma}$ satisfies the KL property with the exponent $\theta$.
\end{enumerate}
\end{theorem}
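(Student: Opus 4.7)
The strategy is to leverage the variational structure of $\Phi$ and $\fgam{\gf}{p}{\gamma}$: both are analyzed by relating their subdifferentials and values to those of $\gf$. The hypothesis $\theta\geq (p-1)/p$ is precisely what is needed to dominate $\Vert x-y\Vert^{p\theta}$ by $\Vert x-y\Vert^{p-1}$ on a neighborhood of the critical set, and this is the technical crux of the whole argument.

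\medskip

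For Assertion~\ref{th:KLp:hope:a}, set $h(z):=\tfrac{1}{p\gamma}\Vert z\Vert^p$ so that $\nabla h(z)=\tfrac{1}{\gamma}\Vert z\Vert^{p-2}z$. The sum rule yields
\begin{equation*}
\partial \Phi(x,y)=\{\nabla h(x-y)\}\times\bigl(\partial \gf(y)-\nabla h(x-y)\bigr).
\end{equation*}
From this I extract $\dist(0,\partial \Phi(x,y))\geq \tfrac{1}{\gamma}\Vert x-y\Vert^{p-1}$ (the first component) and, via the elementary inequality $\Vert a\Vert^2+\Vert b-a\Vert^2\geq \tfrac{1}{2}\Vert b\Vert^2$, the complementary bound $\dist(0,\partial \Phi(x,y))\geq \tfrac{1}{\sqrt{2}}\dist(0,\partial \gf(y))$. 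Fix $(\ov x,\ov y)\in\Dom{\partial \Phi}$. If $\ov x\neq \ov y$ or $0\notin \partial \gf(\ov y)$, then by the outer semicontinuity of the limiting subdifferential these bounds keep $\dist(0,\partial \Phi)$ bounded below on a neighborhood, and the KL inequality with exponent $\theta\in(0,1)$ is trivially satisfied (the desingularizing factor $\phi'$ blows up at $0$). In the remaining case $\ov x=\ov y$ with $0\in \partial \gf(\ov y)$, applying \eqref{eq:intrp:p01} gives
\begin{equation*}
|\Phi(x,y)-\Phi(\ov x,\ov y)|^\theta\leq |\gf(y)-\gf(\ov y)|^\theta+(p\gamma)^{-\theta}\Vert x-y\Vert^{p\theta}.
\end{equation*}
The first summand is controlled via the KL property of $\gf$ at $\ov y$ combined with the second lower bound above, and on a neighborhood with $\Vert x-y\Vert\leq 1$ the inequality $p\theta\geq p-1$ yields $\Vert x-y\Vert^{p\theta}\leq \Vert x-y\Vert^{p-1}\leq \gamma\,\dist(0,\partial \Phi(x,y))$. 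Summing the two estimates delivers the KL inequality for $\Phi$ at $(\ov x,\ov y)$ with the same exponent $\theta$.

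\medskip

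For Assertion~\ref{th:KLp:hope:b}, fix $\ov x\in \Dom{\partial \fgam{\gf}{p}{\gamma}}$. By Theorem~\ref{th:level-bound+locally uniform}~\ref{level-bound+locally uniform:proxnonemp} the set $\prox{\gf}{\gamma}{p}(\ov x)$ is nonempty and compact, and for each $y\in \prox{\gf}{\gamma}{p}(\ov x)$ one has $\Phi(\ov x,y)=\fgam{\gf}{p}{\gamma}(\ov x)$. The standard subdifferential inclusion for marginal functions (Rockafellar--Wets, Thm.~10.13) implies that, for $x$ near $\ov x$ and every $\xi\in \partial \fgam{\gf}{p}{\gamma}(x)$, there is some $y\in \prox{\gf}{\gamma}{p}(x)$ with $(\xi,0)\in\partial \Phi(x,y)$; in particular $\dist(0,\partial \Phi(x,y))\leq \Vert \xi\Vert$ and $\Phi(x,y)=\fgam{\gf}{p}{\gamma}(x)$. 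Since $\Phi$ is constant on the compact set $\{\ov x\}\times\prox{\gf}{\gamma}{p}(\ov x)$ and satisfies the KL property with exponent $\theta$ at each of its points by part~(a) (the quasi-additivity property is automatic in the exponent case), the uniformized KL in Lemma~\ref{lem:Uniformized KL property}, together with Theorem~\ref{th:level-bound+locally uniform}~\ref{level-bound+locally uniform2:conv} (which keeps $(x,y)$ near $\{\ov x\}\times\prox{\gf}{\gamma}{p}(\ov x)$ as $x\to \ov x$), supplies uniform constants such that
\begin{equation*}
|\fgam{\gf}{p}{\gamma}(x)-\fgam{\gf}{p}{\gamma}(\ov x)|^\theta = |\Phi(x,y)-\Phi(\ov x,\ov y)|^\theta\leq c\,\dist(0,\partial \Phi(x,y))\leq c\Vert \xi\Vert,
\end{equation*}
and taking the infimum over $\xi\in\partial \fgam{\gf}{p}{\gamma}(x)$ gives the desired KL inequality.

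\medskip

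The main obstacle is two-fold: in part~(a) one has to justify cleanly that the KL inequality at non-critical points $(\ov x,\ov y)$ of $\Phi$ is trivially valid (requiring outer semicontinuity of $\partial \Phi$ and a careful neighborhood choice), and in part~(b) the transfer from $\Phi$ to $\fgam{\gf}{p}{\gamma}$ is delicate because $\prox{\gf}{\gamma}{p}(x)$ can be multi-valued, which is precisely why one applies the uniformized KL over the whole compact prox-set instead of pointwise at a single $\ov y$. The exponent constraint $\theta\geq (p-1)/p$ enters only in part~(a), through the comparison $\Vert x-y\Vert^{p\theta}\leq \Vert x-y\Vert^{p-1}$, and propagates to part~(b) via the use of part~(a).
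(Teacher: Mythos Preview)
Your proposal is correct and follows essentially the same approach as the paper. For part~(a), you bound $|\Phi(x,y)-\Phi(\ov x,\ov y)|^\theta$ from above and compare against $\dist(0,\partial\Phi)$, whereas the paper bounds $\dist^{1/\theta}(0,\partial\Phi)$ from below and compares against $|\Phi-\Phi(\ov x,\ov y)|$; both hinge on the same exponent comparison $p\theta\geq p-1$ on a ball where $\Vert x-y\Vert\leq 1$, and both dispose of the noncritical case by the standard observation (cited in the paper as \cite[Lemma~2.1]{Li18}) that the KL inequality is automatic when $0\notin\partial\Phi(\ov x,\ov y)$. For part~(b), the paper simply invokes \cite[Theorem~3.1]{Yu2022} together with Theorem~\ref{th:level-bound+locally uniform}~\ref{level-bound+locally uniform:levelunif}, while you spell out the same mechanism directly via the marginal subdifferential inclusion (Rockafellar--Wets, Thm.~10.13) and the uniformized KL over the compact set $\{\ov x\}\times\prox{\gf}{\gamma}{p}(\ov x)$; this is exactly the content of the Yu--Pong result and of the paper's Remark~\ref{rem:on KL prop}~\ref{rem:on KL prop:c}, so the two routes coincide.
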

\begin{proof}
$\ref{th:KLp:hope:a}$
By \cite[Lemma~2.1]{Li18}, $\Phi$ satisfies the KL property for any $\theta\in [0, 1)$ at each $(x, y)\in\Dom{\partial \Phi}$ with $0\notin \partial \Phi(x,y)$;
see Remark~\ref{rem:on KL prop}~$\ref{rem:on KL prop:b}$.
Now consider $(\ov{x}, \ov{y})\in\Dom{\partial \Phi}$ with $0\in \partial \Phi(\ov{x}, \ov{y})$. Since
\begin{equation}\label{eq:KLdefinition:subphi}
\partial \Phi(x,y)=\left[\begin{matrix}
\frac{1}{\gamma}\Vert x -y\Vert^{p-2} (x-y)\\ \partial\gf(y)-\frac{1}{\gamma}\Vert x - y\Vert^{p-2} (x - y)
\end{matrix}\right],
\end{equation}
we must have $\ov{x}=\ov{y}$. 
We show that $\Phi$ satisfies the KL property at $(\ov{x}, \ov{x})\in \Dom{\partial \Phi}$.

By assumption, there exist constants $c, r>0$  and $\eta\in (0, +\infty]$ such that 
\begin{equation}\label{eq:theorem:KLdefinition}
\dist^{\frac{1}{\theta}}(0, \partial \gf(x))\geq c\left\vert\gf(x)-\gf(\ov{x})\right\vert,~~\forall x\in \mb(\ov{x}; r)\cap\Dom{\partial \gf},
\end{equation}
with $0<\left\vert \gf(x)-\gf(\ov{x})\right\vert<\eta$.
Moreover, \eqref{eq:KLdefinition:subphi}, Lemma~\ref{lemma:ineq:inequality p}~$\ref{lemma:ineq:inequality p:ineq1}$, and \cite[Lemma~2.2]{Li18} imply the existence of a constant $c_0>0$, such that for 
$\lambda\in (0,\frac{1}{2})$, 
$\eta_1=\lambda^{p-1}$, and $\eta_2=\left(\frac{\lambda}{1-\lambda}\right)^{p-1}<1$,  for each $(x,y)\in \Dom{\partial \Phi}$, we obtain
\begin{align*}
\dist^{\frac{1}{\theta}}(0, \partial \Phi(x,y))&\geq c_0\left(
(\frac{1}{\gamma})^{\frac{1}{\theta}}\Vert x - y\Vert^{\frac{p-1}{\theta}}+\mathop{\bs{\inf}}\limits_{\zeta\in \partial\gf(y)}\Vert \zeta -\frac{1}{\gamma}\Vert x - y\Vert^{p-2} (x-y)\Vert^{\frac{1}{\theta}}\right)
\\&\geq c_0\left(
(\frac{1}{\gamma})^{\frac{1}{\theta}}\Vert x - y\Vert^{\frac{p-1}{\theta}}+\mathop{\bs{\inf}}\limits_{\zeta\in \partial\gf(y)}\eta_1\Vert \zeta\Vert^{\frac{1}{\theta}}-\eta_2(\frac{1}{\gamma})^{\frac{1}{\theta}}\Vert x - y\Vert^{\frac{p-1}{\theta}}\right)
\\&\geq c_1\left(\mathop{\bs{\inf}}\limits_{\zeta\in \partial\gf(y)}\Vert \zeta\Vert^{\frac{1}{\theta}}+\Vert x - y\Vert^{\frac{p-1}{\theta}}\right),
\end{align*}
where
$c_1:=c_0~\bs\min\{\eta_1, (1-\eta_2)\left(\frac{1}{\gamma}\right)^{\frac{1}{\theta}}\}$.
Choose $r'\in (0, \bs\min\{r, \frac{1}{2}\})$
such that $\frac{1}{p\gamma}(2r')^p<\frac{\eta}{2}$.
Then \eqref{eq:theorem:KLdefinition} holds with $r'$ in place of $r$ and the same constant $c$. 
Let $(x,y)\in \mb((\ov{x}, \ov{x}); r')\cap \Dom{\partial \Phi}$ satisfy $0<\left\vert\Phi(x,y)-\Phi(\ov{x},\ov{x})\right\vert<\frac{\eta}{2}$. Since $\Vert x- y\Vert\leq 2r'$, we have 
$\frac{1}{p\gamma}\Vert x-y\Vert^p<\frac{\eta}{2}$. Moreover,
\begin{align}\label{eq:theorem:KLdefinition:leseta}
\vert \gf(y) - \gf(x)\vert=\left\vert \Phi(x,y) -\Phi(x,x)-\frac{1}{p\gamma}\Vert x-y\Vert^p\right\vert
&\leq \vert \Phi(x,y) -\Phi(x,x)\vert+\frac{1}{p\gamma}\Vert x-y\Vert^p
\notag\\&< \frac{\eta}{2}+\frac{\eta}{2}=\eta.
\end{align}
Moreover,
\begin{align*}
\dist^{\frac{1}{\theta}}\left(0, \partial \Phi(x,y)\right)&
\overset{(i)}{\geq} c_1(\mathop{\bs{\inf}}\limits_{\zeta\in \partial\gf(y)}\Vert \zeta\Vert^{\frac{1}{\theta}}+\Vert x - y\Vert^{p})
\\&\geq c_2(\mathop{\bs{\inf}}\limits_{\zeta\in \partial\gf(y)}c^{-1}\Vert \zeta\Vert^{\frac{1}{\theta}}+\frac{1}{p\gamma}\Vert x - y\Vert^{p}),
\end{align*}
where $c_2:=c_1c~\bs\min \{1,c^{-1}p\gamma\}$ and $(i)$ follows from $\Vert x-y\Vert\leq 2r'<1$ and $\frac{p-1}{\theta}\leq p$.
Together with \eqref{eq:theorem:KLdefinition}  and \eqref{eq:theorem:KLdefinition:leseta}, this yields
\begin{align}\label{eq:theorem:KLdefinitionb}
\dist^{\frac{1}{\theta}}(0, \partial \Phi(x,y))
&\geq c_2(\vert\gf(y)-\gf(\ov{x})\vert+\frac{1}{p\gamma}\Vert x - y\Vert^{p})
\nonumber\\&\geq c_2(\vert\gf(y)-\gf(\ov{x})+\frac{1}{p\gamma}\Vert x - y\Vert^{p}\vert)
=c_2 \vert\Phi(x,y)-\Phi(\ov{x}, \ov{x})\vert,
\end{align}
whenever
$(x,y)\in \mb((\ov{x}, \ov{x}); r')\cap \Dom{\partial \Phi}$ with $0<\left\vert\Phi(x,y)-\Phi(\ov{x},\ov{x})\right\vert<\frac{\eta}{2}$,
i.e., $\Phi$ satisfies the KL property with exponent $\theta$. 
\\
$\ref{th:KLp:hope:b}$ This follows from
Assertion~$\ref{th:KLp:hope:a}$ and \cite[Theorem~3.1]{Yu2022}; see Remark~\ref{rem:on KL prop}~$\ref{rem:on KL prop:c}$.
\end{proof}


\begin{remark}\label{rem:on KL prop} 
\begin{enumerate}[label=(\textbf{\alph*}), font=\normalfont\bfseries, leftmargin=0.4cm]
\item \label{rem:on KL prop:a} 
A similar result to Theorem~\ref{th:KLp:hope} for $p=2$ is discussed in \cite[Remark~5.1~(i)]{Yu2022}, where
the KL property with an exponent is defined without the absolute value
(see Remark~\ref{rem:KLwabsval}).
A similar result to Theorem~\ref{th:KLp:hope}, without considering the absolute value and for $p>1$,
can be obtained by adapting the proof of this theorem.

\item \label{rem:on KL prop:b} 
In the proof of Theorem~\ref{th:KLp:hope}, we used \cite[Lemma~2.1]{Li18}, which defines the KL property in the sense of Remark~\ref{rem:KLwabsval}. Following its argument, one can show that if $\gh: \R^n \to \Rinf$ is proper and lsc, $\ov{x} \in \Dom{\partial \gh}$, and $0 \notin \partial \gh(\ov{x})$, then $\gh$ satisfies the KL property with any exponent $\theta \in [0,1)$ in the sense of  Definition~\ref{def:kldef}.

\item \label{rem:on KL prop:c} 
In \cite[Theorem~3.1]{Yu2022}, the proof is based on the KL property with an exponent,
without the absolute value.
To adapt it to our setting, let $\mathbb{X}=\mathbb{Y}=\R^n$, $F=\Phi(x, y)$ as in Theorem~\ref{th:KLp:hope}, $f=\fgam{\gf}{p}{\gamma}$, and $Y(x)=\prox{\gf}{\gamma}{p}(x)$. 
Due to the high-order prox-boundedness of $\gf$ and Theorem~\ref{th:level-bound+locally uniform}~$\ref{level-bound+locally uniform:levelunif}$, 
the results of \cite[Lemma~2.1]{Yu2022} hold in this context. 
Defining $\Delta:={\ov{x}}\times Y(\ov{x})$ and applying Lemma~\ref{lem:Uniformized KL property} in place of \cite[Lemma~2.2]{Yu2022}, 
we derive an inequality analogous to \cite[Eq.~(3.1)]{Yu2022}, namely 
$\dist\left(0, \partial F(x, y)\right)\geq c\left\vert F(x, y)- f(\ov{x})\right\vert^\theta$, 
whenever $\dist((x,y), \Delta)<\nu$ and $0<\vert F(x,y)-f(\ov{x})\vert<a$, where $a$ and $\nu$ are constants as in the proof of \cite[Theorem~3.1]{Yu2022}. 
Proceeding as in that proof, but replacing the condition $f(\ov{x})<f(x)<f(\ov{x})+\eta$ with $0<\vert f(x)-f(\ov{x})\vert<\eta$, we get $\dist(0, \partial f(x))\geq c\left\vert f(x) - f(\ov{x})\right\vert^\theta$, adjusting our claim.
\end{enumerate}
\end{remark}

\subsection{Inexact oracle for HOME}
\label{subsec:inexacthiord}

We discuss inexactness criteria for computing an approximate element of HOPE (see Assumption~\ref{assum:approx} and Remark~\ref{rem:onassumapprox}). We begin with assumptions that will be used throughout this section.
\begin{assumption}\label{assum:approx} 
Let us consider problem \eqref{eq:mainproblem}. We assume that
\begin{enumerate}[label=(\textbf{\alph*}), font=\normalfont\bfseries, leftmargin=0.7cm]
\item \label{assum:approx:coer weak con} $p>1$ and $\gf: \R^n\to \Rinf$ is a proper, lsc function bounded from below;

\item \label{assum:approx:eps}  $\{\varepsilon_k\}_{k\in \Nz}$ and $\{\delta_k\}_{k\in \Nz}$ are sequences of positive, non-increasing scalars such that $\ov{\varepsilon}:=\sum_{k=0}^{\infty} \varepsilon_k<\infty$ and $\delta_k\downarrow 0$;    
\item \label{assum:approx:aproxep} for a given point $x\in \R^n$, $\gamma>0$, $\varepsilon> 0$, and $\delta> 0$, we can compute
a \textit{prox approximation vector} $\prox{\gf}{\gamma}{p, \varepsilon}(x)\in \R^n$ such that
\begin{align}
&\dist\left(\prox{\gf}{\gamma}{p, \varepsilon}(x), \prox{\gf}{\gamma}{p}(x)\right)\leq \delta,\label{eq:ep-approx:dist}    
\\
&\gf(\prox{\gf}{\gamma}{p, \varepsilon}(x))+\frac{1}{p\gamma}\Vert x-\prox{\gf}{\gamma}{p, \varepsilon}(x)\Vert^p\leq\fgam{\gf}{p}{\gamma}(x)+\varepsilon.\label{eq:ep-approx:fun}
\end{align}
\end{enumerate}
\end{assumption}
Given the prox approximation $\prox{\gf}{\gamma}{p,\varepsilon}(x)$, we define the
\textit{inexact function value} (a zeroth-order oracle for HOME) by
\begin{equation*}
\fgam{\gf}{p,\varepsilon}{\gamma}(x):=\gf(\prox{\gf}{\gamma}{p, \varepsilon}(x))+\frac{1}{p\gamma}\Vert x-\prox{\gf}{\gamma}{p, \varepsilon}(x)\Vert^p.
\end{equation*}
This quantity will serve as the inexact oracle throughout this work.
\begin{remark}\label{rem:onassumapprox}
\begin{enumerate}[label=(\textbf{\alph*}), font=\normalfont\bfseries, leftmargin=0.7cm]
    \item The lower boundedness in Assumption~\ref{assum:approx}~\ref{assum:approx:coer weak con} can be relaxed to high-order prox-boundedness with threshold $\gamma^{\gf,p}$, provided we assume \linebreak$\gamma\in(0,\gamma^{\gf,p})$ wherever relevant.
    
    \item In relations \eqref{eq:ep-approx:dist} and \eqref{eq:ep-approx:fun}, if $\delta=0$ or $\varepsilon=0$, 
     then $\prox{\gf}{\gamma}{p, \varepsilon}(x)\in\prox{\gf}{\gamma}{p}(x)$. 

    \item 
    Regarding \eqref{eq:ep-approx:fun}, by the definition of the infimum, for each $\varepsilon > 0$ there exists $y \in \R^n$ such that
     \begin{equation*}
    \fgam{\gf}{p}{\gamma}(x)\leq\gf(y) + \frac{1}{p\gamma}\|x-y\|^p \leq\fgam{\gf}{p}{\gamma}(x) + \varepsilon.
    \end{equation*}
    Since $\fgam{\gf}{p}{\gamma}(x)<+\infty$ by Theorem~\ref{th:level-bound+locally uniform}~\ref{level-bound+locally uniform:proxnonemp}, any such $y$ must belong to $\dom{\gh}$. Thus, relations~\eqref{eq:ep-approx:dist} and~\eqref{eq:ep-approx:fun} implicitly 
    assume that the prox approximation $\prox{\varphi}{\gamma}{p,\varepsilon}(x)$ lies in $\dom{\gh}$.

        \item \label{rem:onassumapprox:com} From Theorem~\ref{th:level-bound+locally uniform}~$\ref{level-bound+locally uniform:proxnonemp}$, the set  $\prox{\gf}{\gamma}{p}(x)$ is nonempty and compact. Hence, there exists $\ov{y}\in  \prox{\gf}{\gamma}{p}(x)$ such that 
    \begin{equation*}
    \Vert \prox{\gf}{\gamma}{p, \varepsilon}(x)-\ov{y}\Vert= \dist\left(\prox{\gf}{\gamma}{p, \varepsilon}(x), \prox{\gf}{\gamma}{p}(x)\right).
    \end{equation*}
     Although computing $\ov{y}$ may be infeasible in practice, this selection is useful for analyzing the behavior of $\fgam{\gf}{p}{\gamma}$ near cluster points of iterates generated by our algorithms.

    \item \label{rem:onassumapprox:epsdel} 
    In Assumption~\ref{assum:approx}~\ref{assum:approx:aproxep}, we allow two error parameters, $\varepsilon$ and $\delta$, to define a prox approximation. 
In the convex case with $p=2$, \cite[Lemma~3.1]{fukushima1996} shows that if 
 $\prox{\gf}{\gamma}{p, \varepsilon}(x)$ satisfies \eqref{eq:ep-approx:fun}, 
then it automatically satisfies \eqref{eq:ep-approx:dist} with 
$\delta = \sqrt{2\gamma\,\varepsilon}$. 
Analogous implications hold in certain nonconvex settings; see, e.g., 
\cite[Theorem~4]{bonettini2020} and \cite[Proposition~3.15]{Khanh24weak}. 
For such classes, we certify \eqref{eq:ep-approx:fun} in practice by terminating the inner solver 
when a computable duality gap (or valid lower bound) falls below $\varepsilon$; 
see, e.g., \cite{Boroun21,fukushima1996,Khanh24weak,liu2012implementable} and references therein. 
As such, we assume that an approximate proximal point satisfying both 
\eqref{eq:ep-approx:dist} and \eqref{eq:ep-approx:fun} can be produced. Studying the lower-level is out of scope of this study, i.e., we only focus on the outer-level convergence analysis.
 
\end{enumerate}
\end{remark}



\section{Boosted high-order inexact proximal-point algorithm} \label{sec:boosted}
The first natural instance of the ItsOPT framework is HiPPA, an inexact high-order proximal-point method, given by 
\begin{equation}\label{eq:HiPPA}
x^{k+1}= \prox{\gf}{\gamma}{p, \varepsilon_k}(x^k),
\end{equation}
which generalizes the classical inexact proximal-point method; see, e.g., \cite{rockafellar1976monotone}. We subsequently present a boosted variant (Boosted HiPPA) based on a nonmonotone line search. To this end, we first relate the root of the exact residual mapping to that of its inexact counterpart. After establishing well-definedness of Boosted HiPPA, we study subsequential, global, and linear convergence to proximal fixed points of HOME.

Our algorithms aim to find a proximal fixed point of the proximal operator. For nonconvex $\gf$, the set of proximal fixed points lies between the set of minimizers and the set of Fr\'{e}chet and Mordukhovich critical points; see Theorem~\ref{prop:relcrit}~$\ref{prop:relcrit:FcritMcritfix}$. Let us define the set-valued \textit{fixed-point residual mapping} $R_\gamma:\R^n\rightrightarrows\R^n$ given by  $R_\gamma(x):= x - \prox{\gf}{\gamma}{p}(x)$, where finding a proximal fixed point is equivalent to finding $\ov{x}\in\R^n$ with $0\in R_\gamma(\ov{x})$. 
We will construct a sequence $\{x^k\}_{k\in \Nz}$ with $\dist(0, R_\gamma(x^k))\to 0$, ensuring every cluster point is a proximal fixed point. 
Since only $\prox{\gf}{\gamma}{p, \varepsilon_k}(x^k)$ is available, our analysis is based on the \textit{approximated residual function} defined as $R_\gamma^{\varepsilon}(x):= x - \prox{\gf}{\gamma}{p, \varepsilon}(x)$. The following key lemma shows that if
$R_\gamma^{\varepsilon_k}(x^k)\to 0$, then $\dist(0, R_\gamma(x^k))\to 0$.

\begin{lemma}[Fixed-point residual function versus its approximation] \label{lem:relresiduals}
Let Assumption~\ref{assum:approx} hold.
If $\{x^k\}_{k\in \Nz}$ satisfies $R_\gamma^{\varepsilon_k}(x^k)\to 0$,  then
 $\dist(0, R_\gamma(x^k))\to 0$. 
\end{lemma}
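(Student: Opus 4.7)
The plan is a short triangle-inequality argument that bridges the exact residual and its $\varepsilon_k$-approximation via the inexactness parameters $\delta_k$ already available in Assumption~\ref{assum:approx}~\ref{assum:approx:aproxep}.

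First, I would invoke Remark~\ref{rem:onassumapprox}~\ref{rem:onassumapprox:com}: since $\prox{\gf}{\gamma}{p}(x^k)$ is nonempty and compact (by Theorem~\ref{th:level-bound+locally uniform}~\ref{level-bound+locally uniform:proxnonemp}), there exists $\ov{y}^k \in \prox{\gf}{\gamma}{p}(x^k)$ attaining the distance, i.e.,
\[
\Vert \prox{\gf}{\gamma}{p,\varepsilon_k}(x^k) - \ov{y}^k \Vert = \dist\left(\prox{\gf}{\gamma}{p,\varepsilon_k}(x^k),\prox{\gf}{\gamma}{p}(x^k)\right) < \delta_k,
\]
where the last inequality is \eqref{eq:ep-approx:dist}.

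Next, noting that $x^k - \ov{y}^k \in R_\gamma(x^k)$ by definition of the fixed-point residual mapping, I would apply the triangle inequality:
\[
\dist(0, R_\gamma(x^k)) \leq \Vert x^k - \ov{y}^k \Vert \leq \Vert x^k - \prox{\gf}{\gamma}{p,\varepsilon_k}(x^k)\Vert + \Vert \prox{\gf}{\gamma}{p,\varepsilon_k}(x^k) - \ov{y}^k\Vert < \Vert R_\gamma^{\varepsilon_k}(x^k)\Vert + \delta_k.
\]

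Finally, since $\delta_k \downarrow 0$ by Assumption~\ref{assum:approx}~\ref{assum:approx:eps} and $R_\gamma^{\varepsilon_k}(x^k) \to 0$ by hypothesis, both terms on the right-hand side vanish, yielding $\dist(0, R_\gamma(x^k)) \to 0$. There is no real obstacle here: the entire content is making the error bookkeeping explicit by using the $\delta_k$ bound from \eqref{eq:ep-approx:dist} to relate the approximate prox value to an actual element of $\prox{\gf}{\gamma}{p}(x^k)$.
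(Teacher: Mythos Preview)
Your proposal is correct and follows essentially the same triangle-inequality argument as the paper's proof; the only cosmetic difference is that you name an explicit minimizer $\ov{y}^k$ whereas the paper writes the bound directly with $\dist\left(\prox{\gf}{\gamma}{p,\varepsilon_k}(x^k),\prox{\gf}{\gamma}{p}(x^k)\right)$.
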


\begin{proof}
For each $x^k$, it holds that
\begin{equation*}
 \dist\left(0, R_\gamma(x^k)\right) \leq \Vert x^k-\prox{\gf}{\gamma}{p, \varepsilon_k}(x^k)\Vert+\dist\left(\prox{\gf}{\gamma}{p, \varepsilon_k}(x^k),\prox{\gf}{\gamma}{p}(x^k)\right).
\end{equation*}
As $\delta_k\downarrow 0$, it follows that $R_\gamma^{\varepsilon_k}(x^k)\to 0$  implies  $\dist(0, R_\gamma(x^k))\to 0$. \end{proof}

Although the HiPPA algorithm \eqref{eq:HiPPA} is an interesting algorithm for solving \eqref{eq:mainproblem}, it is well-known that generic proximal-point schemes can be slow in practice; see, e.g., \cite{Guler92,Nesterov2023a,Salzo12}. This motivates a boosted variant that employs a derivative-free nonmonotone line search in a similar vein as the schemes given in  \cite{fukushima1981generalized,fukushima1996,mine1981minimization}, in which the search direction is not necessarily descent. This algorithm is called {\it Boosted HiPPA}, presented in Algorithm~\ref{alg:inexact}.

\begin{algorithm}
\caption{Boosted HiPPA}\label{alg:inexact}
\begin{algorithmic}[1]
\STATE  \textbf{Initialization} Start with $x^0\in \R^n$, $\gamma>0$, $\sigma\in \left(0, \frac{1}{p\gamma}\right)$, $\vartheta\in (0,1)$, and set $k=0$;
\WHILE{stopping criteria are not satisfied}
\STATE  Compute $\ov{x}^{k}=\prox{\gf}{\gamma}{p, \varepsilon_k}(x^k)$ and $R_\gamma^{\varepsilon_k}(x^k)$;  \COMMENT{lower-level}
\STATE \label{alg:hippa:dir} Choose a direction
$d^k\in \R^n$ and set $m=0$;  \COMMENT{upper-level}
\REPEAT\label{alg:hippa:strep}
\STATE  Set $\alpha_k=\vartheta^m$ and $\widehat{x}^{k+1}:=(1-\alpha_k)\ov{x}^{k}+\alpha_k(x^k+d^k),~~m=m+1$;  \COMMENT{upper-level}
\STATE  Compute $\prox{\gf}{\gamma}{p, \varepsilon_{k+1}}(\widehat{x}^{k+1})$ and $\fgamepsko{\gf}{p}{\gamma}(\widehat{x}^{k+1})$;  \COMMENT{lower-level}
 \UNTIL{
\begin{equation}\label{eq:alg:ingrad:upd}
\fgamepsko{\gf}{p}{\gamma}(\widehat{x}^{k+1})\leq
\fgamepsk{\gf}{p}{\gamma}(x^k)-\sigma\Vert R_\gamma^{\varepsilon_k}(x^k)\Vert^{p} +\varepsilon_{k}+\varepsilon_{k+1},
\end{equation}
}\label{alg:hippa:endrep}
\STATE  $x^{k+1}=\widehat{x}^{k+1}$;~$k=k+1$;
\ENDWHILE
\end{algorithmic}
\end{algorithm}

In Step~\ref{alg:hippa:dir}, the search direction $d^k$ is not required to be a descent direction; however, a concrete strategy is proposed in Subsection~\ref{sub:impiss}. Let us emphasize that the term  $\varepsilon_{k} + \varepsilon_{k+1}$ on the right-hand side of \eqref{eq:alg:ingrad:upd} induces nonmonotonicity; see, e.g., \cite{ahookhosh2017efficiency, Amini2014} for background on nonmonotone line searches. Accordingly, we refer to \eqref{eq:alg:ingrad:upd} as an \textit{inexact descent condition}.

The next result establishes the well-definedness of Algorithm~\ref{alg:inexact}, meaning that the inequality \eqref{eq:alg:ingrad:upd} is satisfied at certain points, and the inner loop (Steps~\ref{alg:hippa:strep}-\ref{alg:hippa:endrep}) terminates after a finite number of backtracking steps.
 
\begin{theorem}[Well-definedness of Boosted HiPPA]\label{th:welldefalg}
Let Assumption~\ref{assum:approx} hold.
Fix $\gamma>0$, $\sigma\in \left(0, \frac{1}{p\gamma}\right)$, and $\vartheta\in (0,1)$,
and let $x^k$ be generated by Algorithm~\ref{alg:inexact}. If $\prox{\gf}{\gamma}{p, \varepsilon_k}(x^k)\neq x^k$, then there exists $\ov{m}\in \Nz$
such that the inequality \eqref{eq:alg:ingrad:upd} holds. Consequently, the condition in
Step~\ref{alg:hippa:endrep} is met after finitely many backtracking steps.
\end{theorem}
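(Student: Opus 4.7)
The plan is to compare $\fgamepsko{\gf}{p}{\gamma}(\widehat{x}^{k+1})$ with $\fgamepsk{\gf}{p}{\gamma}(x^k)$ by probing the infimum defining HOME at the previous iterate's prox approximation $\ov{x}^k = \prox{\gf}{\gamma}{p,\varepsilon_k}(x^k)$. The first observation is that the update rule gives
\[
\widehat{x}^{k+1} - \ov{x}^k = \alpha_k\left(R_\gamma^{\varepsilon_k}(x^k) + d^k\right),
\]
so $\widehat{x}^{k+1}\to \ov{x}^k$ as $\alpha_k=\vartheta^m\downarrow 0$, which is the mechanism that will eventually force the descent condition to hold.

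Next, by Assumption~\ref{assum:approx}~\ref{assum:approx:aproxep} applied at $\widehat{x}^{k+1}$ (for the parameter $\varepsilon_{k+1}$) and then probing the infimum that defines $\fgam{\gf}{p}{\gamma}(\widehat{x}^{k+1})$ at $y=\ov{x}^k$, I would derive
\begin{align*}
\fgamepsko{\gf}{p}{\gamma}(\widehat{x}^{k+1})
&< \fgam{\gf}{p}{\gamma}(\widehat{x}^{k+1}) + \varepsilon_{k+1}
\leq \gf(\ov{x}^k) + \frac{1}{p\gamma}\left\Vert \widehat{x}^{k+1} - \ov{x}^k\right\Vert^p + \varepsilon_{k+1}\\
&= \gf(\ov{x}^k) + \frac{\alpha_k^p}{p\gamma}\left\Vert R_\gamma^{\varepsilon_k}(x^k) + d^k\right\Vert^p + \varepsilon_{k+1}.
\end{align*}
Then, using the identity $\gf(\ov{x}^k) = \fgamepsk{\gf}{p}{\gamma}(x^k) - \frac{1}{p\gamma}\Vert R_\gamma^{\varepsilon_k}(x^k)\Vert^p$, which is precisely the construction of $\fgamepsk{\gf}{p}{\gamma}(x^k)$ at $\ov{x}^k$, substitution yields
\[
\fgamepsko{\gf}{p}{\gamma}(\widehat{x}^{k+1})
\leq \fgamepsk{\gf}{p}{\gamma}(x^k) - \frac{1}{p\gamma}\left\Vert R_\gamma^{\varepsilon_k}(x^k)\right\Vert^p + \frac{\alpha_k^p}{p\gamma}\left\Vert R_\gamma^{\varepsilon_k}(x^k) + d^k\right\Vert^p + \varepsilon_{k+1}.
\]

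Comparing this bound with the target inequality \eqref{eq:alg:ingrad:upd}, I would observe that \eqref{eq:alg:ingrad:upd} is implied by
\[
\frac{\alpha_k^p}{p\gamma}\left\Vert R_\gamma^{\varepsilon_k}(x^k) + d^k\right\Vert^p
\leq \left(\frac{1}{p\gamma} - \sigma\right)\left\Vert R_\gamma^{\varepsilon_k}(x^k)\right\Vert^p + \varepsilon_k.
\]
The right-hand side is strictly positive because $\sigma < \frac{1}{p\gamma}$, $\varepsilon_k>0$, and $\Vert R_\gamma^{\varepsilon_k}(x^k)\Vert > 0$ by the standing assumption $\prox{\gf}{\gamma}{p,\varepsilon_k}(x^k)\neq x^k$, while the left-hand side scales as $\alpha_k^p=\vartheta^{mp}\to 0$ as $m$ grows (with $\Vert R_\gamma^{\varepsilon_k}(x^k)+d^k\Vert$ a fixed constant for the current iteration). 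Hence there exists a finite $\ov{m}\in\Nz$ for which the inequality—and therefore \eqref{eq:alg:ingrad:upd}—is met, so the inner backtracking loop terminates after finitely many steps.

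There is no serious obstacle: the essential idea is simply to test the infimum defining $\fgam{\gf}{p}{\gamma}(\widehat{x}^{k+1})$ at the specific point $\ov{x}^k$, which injects the $\alpha_k^p$ factor that controls the perturbation against the strictly positive slack $\left(\frac{1}{p\gamma}-\sigma\right)\Vert R_\gamma^{\varepsilon_k}(x^k)\Vert^p+\varepsilon_k$. Everything else is a direct calculation.
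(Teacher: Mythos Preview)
Your proof is correct and, in fact, slightly more direct than the paper's. Both arguments hinge on the observation that $\widehat{x}^{k+1}\to\ov{x}^k$ as $m\to\infty$, but they exploit it differently. The paper bounds $\fgam{\gf}{p}{\gamma}(\ov{x}^k)\leq\gf(\ov{x}^k)$ via Fact~\ref{fact:horder:Bauschke17:p12.9}, obtains a strict inequality at $\ov{x}^k$, and then invokes the continuity of $\fgam{\gf}{p}{\gamma}$ (Theorem~\ref{th:level-bound+locally uniform}~\ref{level-bound+locally uniform:cononx}) to transfer it to $\widehat{x}^{k+1}$ for large $m$. You instead test the infimum defining $\fgam{\gf}{p}{\gamma}(\widehat{x}^{k+1})$ directly at $y=\ov{x}^k$, which produces the explicit $\frac{\alpha_k^p}{p\gamma}\Vert R_\gamma^{\varepsilon_k}(x^k)+d^k\Vert^p$ perturbation term and avoids any appeal to continuity. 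Your route is marginally more elementary and yields a quantitative sufficient condition on $\alpha_k$ (hence an explicit bound on $\ov m$), which the paper's continuity argument does not. Both rely on the exact identity $\gf(\ov{x}^k)=\fgamepsk{\gf}{p}{\gamma}(x^k)-\frac{1}{p\gamma}\Vert R_\gamma^{\varepsilon_k}(x^k)\Vert^p$ and the strict slack $\frac{1}{p\gamma}-\sigma>0$ in the same way.
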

\begin{proof}
Set $\ov{x}^{k}:=\prox{\gf}{\gamma}{p, \varepsilon_k}(x^k)$. If $\ov{x}^{k}\neq x^k$, then by Fact~\ref{fact:horder:Bauschke17:p12.9}, \eqref{eq:ep-approx:fun}, the fact $\fgam{\gf}{p}{\gamma}(x^{k})\leq \fgamepsk{\gf}{p}{\gamma}(x^k)$, and $\sigma\in \left(0, \frac{1}{p\gamma}\right)$, we obtain
\begin{equation}\label{eq:th:welldefalg}
\fgam{\gf}{p}{\gamma}(\ov{x}^{k})\leq \gf(\ov{x}^{k})\leq\fgam{\gf}{p}{\gamma}(x^{k})-\frac{1}{p\gamma}\Vert R_\gamma^{\varepsilon_k}(x^k)\Vert^p+\varepsilon_k<\fgamepsk{\gf}{p}{\gamma}(x^k)-  \sigma\Vert R_\gamma^{\varepsilon_k}(x^k)\Vert^{p}+\varepsilon_k.
\end{equation}
Since $\vartheta^m\downarrow 0$ as $m\to \infty$, we have $(1-\vartheta^m)\ov{x}^{k}+\vartheta^m(x^k+d^k)\to \ov{x}^{k}$. By continuity of $\fgam{\gf}{p}{\gamma}$ and \eqref{eq:th:welldefalg}, there exists $\ov{m}\in \Nz$ such that
\begin{equation}\label{eq:th:welldefalg:b}
\fgam{\gf}{p}{\gamma}\left((1-\vartheta^{\ov{m}})\ov{x}^{k}+\vartheta^{\ov{m}}(x^k+d^k)\right)\leq \fgamepsk{\gf}{p}{\gamma}(x^k)-  \sigma\Vert R_\gamma^{\varepsilon_k}(x^k)\Vert^{p}+\varepsilon_k.
\end{equation}
Set $\alpha_k=\vartheta^{\ov{m}}$ and $\widehat{x}^{k+1}:=(1-\alpha_k)\ov{x}^{k}+\alpha_k(x^k+d^k)$. By \eqref{eq:ep-approx:fun} and by setting $\widehat{y}:=\prox{\gf}{\gamma}{p, \varepsilon_{k+1}}(\widehat{x}^{k+1})$, we have 
\begin{equation*}\label{eq:th:welldefalg:b2}
\fgamepsko{\gf}{p}{\gamma}(\widehat{x}^{k+1})=\gf(\widehat{y})+\frac{1}{p\gamma}\Vert \widehat{x}^{k+1}-\widehat{y}\Vert^p\leq \fgam{\gf}{p}{\gamma}(\widehat{x}^{k+1})+\varepsilon_{k+1},
\end{equation*}
i.e., $\fgamepsko{\gf}{p}{\gamma}(\widehat{x}^{k+1})-\varepsilon_{k+1}\leq \fgam{\gf}{p}{\gamma}(\widehat{x}^{k+1})$. Together with \eqref{eq:th:welldefalg:b}, it follows that
\[
\fgamepsko{\gf}{p}{\gamma}(\widehat{x}^{k+1})-\varepsilon_{k+1}\leq \fgam{\gf}{p}{\gamma}(\widehat{x}^{k+1})\leq
\fgamepsk{\gf}{p}{\gamma}(x^k)-  \sigma\Vert R_\gamma^{\varepsilon_k}(x^k)\Vert^{p}+\varepsilon_k.
\]
Therefore, \eqref{eq:alg:ingrad:upd} holds, and the inner loop terminates.
\end{proof}

Let $\ov{x}^{k}:=\prox{\gf}{\gamma}{p, \varepsilon_k}(x^k)$ and choose $\ov{y}^k\in \prox{\gf}{\gamma}{p}(x^k)$ such that
\begin{equation}\label{eq:defybar}
  \Vert \ov{x}^{k}-\ov{y}^k\Vert=\dist\left(\ov{x}^{k}, \prox{\gf}{\gamma}{p}(x^k)\right),
\end{equation}
   which is well-defined by Theorem~\ref{th:level-bound+locally uniform}~$\ref{level-bound+locally uniform:proxnonemp}$ (see also Remark~\ref{rem:onassumapprox}~$\ref{rem:onassumapprox:com}$).
    
\begin{theorem}[Subsequential convergence]\label{th:comcon}
Let Assumption~\ref{assum:approx} hold.
Let $\{x^k\}_{k\in \Nz}$ and $\{\ov{x}^{k}\}_{k\in \Nz}$ be generated by Algorithm~\ref{alg:inexact} and $\{\ov{y}^{k}\}_{k\in \Nz}$  satisfy \eqref{eq:defybar}. 
Then,
\begin{enumerate}[label=(\textbf{\alph*}), font=\normalfont\bfseries, leftmargin=0.7cm]
\item \label{th:comcon:resconv} $\sum_{k=0}^{\infty}\Vert R_\gamma^{\varepsilon_k}(x^k)\Vert^{p}<+\infty$ and $R_\gamma^{\varepsilon_k}(x^k)\rightarrow 0$;
\item \label{th:comcon:cluster} the sequences  $\{x^k\}_{k\in \Nz}$, $\{\ov{x}^{k}\}_{k\in \Nz}$, and $\{\ov{y}^{k}\}_{k\in \Nz}$ share the same cluster points, if any, and every cluster point is a proximal fixed point;
 \item \label{th:comcon:funconvb} there exists a finite value $\fv\in \R$ such that
 \begin{equation}\label{eq:th:comcon:limser}
  \lim_{k\to\infty}\fgam{\gf}{p}{\gamma}(x^k)= \lim_{k\to\infty} \gf(\ov{y}^k) = \lim_{k\to\infty}\fgamepsk{\gf}{p}{\gamma}(x^{k})
  =\lim_{k\to\infty}\gf(\ov{x}^k)=\fv.
 \end{equation}
 Moreover, if $\widehat{x}\in \R^n$ is a cluster point of $\{x^k\}_{k\in \Nz}$, then
$\gf(\widehat{x})=\fgam{\gf}{p}{\gamma}(\widehat{x})=\fv$.
 Finally, if $\{x^k\}_{k\in \Nz}$ is bounded, then $\lim_{k\to\infty}\fgam{\gf}{p}{\gamma}(\ov{y}^k)=\fv$;
\item \label{th:comcon:funconstant} 
the functions $\gf$ and $\fgam{\gf}{p}{\gamma}$ are constant on the set of cluster points, if any.
\end{enumerate}
\end{theorem}
\begin{proof}
$\ref{th:comcon:resconv}$ Utilizing the inequality \eqref{eq:alg:ingrad:upd} over the first $K>0$ iterations implies
\begin{align}\label{eq:sum:th:comco}
\sigma \sum_{k=0}^{K-1}&\Vert R_\gamma^{\varepsilon_k}(x^k)\Vert^{p}\leq \fgam{\gf}{p,\varepsilon_0}{\gamma}(x^0)-\fgam{\gf}{p,\varepsilon_K}{\gamma}(x^{K})+ \sum_{k=0}^{K-1}(\varepsilon_k+\varepsilon_{k+1})
\nonumber\\
&\overset{(i)}{\leq} \fgam{\gf}{p}{\gamma}(x^0)+\varepsilon_0-\fgam{\gf}{p}{\gamma}(x^K)+ \sum_{k=0}^{K-1}(\varepsilon_k+\varepsilon_{k+1})
\overset{(ii)}{\leq} \fgam{\gf}{p}{\gamma}(x^0)-\gf^*+ 2\ov{\varepsilon},
\end{align}
where $(i)$ follows from \eqref{eq:ep-approx:fun} and $(ii)$ is obtained from 
Theorem~\ref{prop:relcrit}~$\ref{prop:relcrit:inf:arg:eqinf}$. 
\\
$\ref{th:comcon:cluster}$ From Assertion~$\ref{th:comcon:resconv}$, we obtain $\Vert x^k - \ov{x}^k\Vert=\Vert R_\gamma^{\varepsilon_k}(x^k)\Vert\rightarrow 0$. Moreover, since $\delta_k\downarrow 0$,  \eqref{eq:ep-approx:dist} implies  $\Vert \ov{y}^k - \ov{x}^k\Vert\to 0$. Hence,
  $\{x^k\}_{k\in \Nz}$, $\{\ov{x}^{k}\}_{k\in \Nz}$, and $\{\ov{y}^k\}_{k\in \Nz}$ share the same cluster points, if any. Let $\{x^j\}_{j\in J\subseteq \Nz}$ be a subsequence with limiting point $\widehat{x}$.
For the corresponding subsequence $\{\ov{y}^j\}_{j\in J}$, we also have $\ov{y}^j\to \widehat{x}$.
As such, by Theorem~\ref{th:level-bound+locally uniform}~$\ref{level-bound+locally uniform2:conv}$, it follows that $\widehat{x}\in \prox{\gf}{\gamma}{p}(\widehat{x})$. Thus, $\widehat{x}$ is a proximal fixed point.
 \\
$\ref{th:comcon:funconvb}$ From \eqref{eq:alg:ingrad:upd}, we have
$\fgamepsko{\gf}{p}{\gamma}(x^{k+1})\leq \fgamepsk{\gf}{p}{\gamma}(x^k)+\varepsilon_{k}+\varepsilon_{k+1}$, i.e.,
\begin{equation*}
\fgamepsko{\gf}{p}{\gamma}(x^{k+1})+\sum_{j=k+1}^{\infty}\varepsilon_{j}+\sum_{j=k+2}^{\infty}\varepsilon_{j}\leq \fgamepsk{\gf}{p}{\gamma}(x^k)+\varepsilon_{k}+\varepsilon_{k+1}+\sum_{j=k+1}^{\infty}\varepsilon_{j}+\sum_{j=k+2}^{\infty}\varepsilon_{j}.
\end{equation*}
Defining $\fv_k:=\fgamepsk{\gf}{p}{\gamma}(x^k)+\sum_{j=k}^{\infty}\varepsilon_{j}+\sum_{j=k+1}^{\infty}\varepsilon_{j}$, this implies that $\{\fv_k\}_{k\in \Nz}$ is non-increasing and bounded from below by $\gf^*$, i.e., it  converges to some $\fv\in \R$. Hence,
\begin{equation*}
\lim_{k\to \infty}\fgamepsk{\gf}{p}{\gamma}(x^k)=\lim_{k\to \infty} \left(\fgamepsk{\gf}{p}{\gamma}(x^k)+\sum_{j=k}^{\infty}\varepsilon_{j}+\sum_{j=k+1}^{\infty}\varepsilon_{j}\right)=
\lim_{k\to \infty}\fv_k=\fv.
\end{equation*}
On the other hand, from the proof of Assertion~$\ref{th:comcon:cluster}$, we observed that $\Vert x^k - \ov{x}^k\Vert\to 0$ and $\Vert \ov{y}^k - \ov{x}^k\Vert\to 0$, which implies 
$\Vert x^k - \ov{y}^k\Vert\to 0$. Thus, from \eqref{eq:ep-approx:fun},
\begin{align*}
\gf(\ov{y}^k)+\frac{1}{p\gamma}\Vert x^k - \ov{y}^k\Vert^p&=
\fgam{\gf}{p}{\gamma}(x^k)\leq \fgam{\gf}{p,\varepsilon_k}{\gamma}(x^k)=\gf(\ov{x}^k)+\frac{1}{p\gamma}\Vert R_\gamma^{\varepsilon_k}(x^k)\Vert^p
\\&\leq  \fgam{\gf}{p}{\gamma}(x^k)+\varepsilon_k=\gf(\ov{y}^k)+\frac{1}{p\gamma}\Vert x^k - \ov{y}^k\Vert^p+\varepsilon_k.
\end{align*}
Taking limits as $k \to \infty$, we obtain \eqref{eq:th:comcon:limser}. 
Let $\widehat{x}\in \R^n$ be a cluster point of the subsequence $\{x^j\}_{j\in J\subseteq\Nz}$ (and also $\{\ov{y}^j\}_{j\in J}$). Hence, we have
\begin{equation*}
\gf(\widehat{x})\overset{(i)}{\leq} 
\liminf_{\scriptsize{\begin{matrix}j\to\infty\\j\in J \end{matrix}}}\gf(\ov{y}^j)
\leq \limsup_{\scriptsize{\begin{matrix}j\to\infty\\j\in J \end{matrix}}}\gf(\ov{y}^j)
\overset{(ii)}{\leq} \limsup_{\scriptsize{\begin{matrix}j\to\infty\\j\in J \end{matrix}}} \fgam{\gf}{p}{\gamma}(x^j)\overset{(iii)}{=}\fgam{\gf}{p}{\gamma}(\widehat{x})\leq \gf(\widehat{x}),
\end{equation*}
where $(i)$ obtains as $\gf$ is lsc and $(ii)$ comes from 
$\gf(\ov{y}^j)\leq \gf(\ov{y}^j)+\frac{1}{p\gamma}\Vert x^j- \ov{y}^j\Vert^p=\fgam{\gf}{p}{\gamma}(x^j)$.
For $(iii)$, we use the continuity of $\fgam{\gf}{p}{\gamma}$. Now, from \eqref{eq:th:comcon:limser}, one has
\begin{equation*}
\gf(\widehat{x})=\fgam{\gf}{p}{\gamma}(\widehat{x})=\lim_{\scriptsize{\begin{matrix}j\to\infty\\j\in J \end{matrix}}}\fgam{\gf}{p}{\gamma}(x^j)=\lim_{k\to\infty}\fgam{\gf}{p}{\gamma}(x^k)=\fv.
\end{equation*}
Finally, assume that $\{x^k\}_{k\in \Nz}$ is bounded. 
Hence, for some $r>0$, $\{x^k\}_{k\in \Nz}\subseteq \mb(0;r)$. Let $\tilde{x}\in \dom{\gf}$ be arbitrary and fixed from now on.
Since $\gf$ is bounded from below by some $\ell_0\in \R$, we have 
$\gf(\tilde{x})-\ell_0\geq 0$ and for all $k\in \Nz$, it is concluded that
\begin{equation*}
\ell_0+\frac{1}{p\gamma}\Vert x^k- \ov{y}^{k}\Vert^p\leq   \gf(\ov{y}^{k})+\frac{1}{p\gamma}\Vert x^k- \ov{y}^{k}\Vert^p=\fgam{\gf}{p}{\gamma}(x^k)\leq \gf(\tilde{x})+\frac{1}{p\gamma}\Vert x^k - \tilde{x}\Vert^p.
\end{equation*}
Invoking Lemma~\ref{lemma:ineq:inequality p}~\ref{lemma:ineq:inequality p:ineq4} ensures
\begin{equation*}
\frac{1}{p\gamma}\left(2^{1-p}\Vert \ov{y}^{k}\Vert^p-\Vert x^k\Vert^p\right)\leq\frac{2^{p-1}}{p\gamma}\Vert x^k\Vert^p+\left(\gf(\tilde{x})+\frac{2^{p-1}}{p\gamma}\Vert\tilde{x}\Vert^p-\ell_0\right).
\end{equation*}
Hence, for all $k\in \Nz$,
\begin{equation*}
\Vert \ov{y}^{k}\Vert^p\leq \frac{p\gamma}{2^{1-p}}\left(\frac{2^{p-1}+1}{p\gamma}r^p+\left(\gf(\tilde{x})+\frac{2^{p-1}}{p\gamma}\Vert\tilde{x}\Vert^p-\ell_0\right)\right).
\end{equation*}
Thus, the sequence $\{\ov{y}^{k}\}_{k\in \Nz}$ is also bounded. Let $U$ be a bounded set containing all these sequences. Since $\fgam{\gf}{p}{\gamma}$ is Lipschitz on $U$ with a constant $L$, see \cite[Theorem~3.1~(b)]{KecisThibault15}, we have
\begin{equation*}
\vert \fgam{\gf}{p}{\gamma}(x^k) - \fgam{\gf}{p}{\gamma}(\ov{y}^k)\vert\leq L\Vert x^k- \ov{y}^k\Vert\to 0.
\end{equation*}
Therefore, $\lim_{k\to\infty}\fgam{\gf}{p}{\gamma}(\ov{y}^k)=\lim_{k\to\infty}\fgam{\gf}{p}{\gamma}(x^k)=\fv$. 
\\
$\ref{th:comcon:funconstant}$ 
Let $\widehat{x}\in \R^n$ be a cluster point of the sequence $\{x^k\}_{k\in \Nz}$. 
From Assertion~$\ref{th:comcon:funconvb}$, we have
$\gf(\widehat{x})=\fgam{\gf}{p}{\gamma}(\widehat{x})=\fv$. Since $\widehat{x}$ was chosen arbitrarily, this completes the proof.
\end{proof}

\begin{remark}\label{rem:com}
\begin{enumerate}[label=(\textbf{\alph*}), font=\normalfont\bfseries, leftmargin=0.7cm]
\item \label{rem:comcon:compx} A common stopping criterion in Algorithm~\ref{alg:inexact} involves finding a point $x^k$ such that 
$\Vert R_\gamma^{\varepsilon_k}(x^k)\Vert\leq \epsilon$ for a given tolerance $\epsilon>0$. 
Let the sequence $\{x^k\}_{k\in \Nz}$ be generated by Algorithm~\ref{alg:inexact}. 
Then
from \eqref{eq:sum:th:comco}, we obtain
\begin{equation*}
\sigma K\bs\min_{0\leq k\leq K-1}\Vert R_\gamma^{\varepsilon_k}(x^k)\Vert^{p}\leq \fgam{\gf}{p}{\gamma}(x^0)-\gf^*+ 2\ov{\varepsilon}\leq \gf(x^0)-\gf^*+ 2\ov{\varepsilon}.
\end{equation*}
Thus, it is required that
$\frac{\gf(x^0)-\gf^*+ 2\ov{\varepsilon}}{\sigma K}\leq \epsilon^p$.
Hence, the algorithm terminates within some
$k\leq \frac{\gf(x^0)-\gf^*+2\ov{\varepsilon}}{\sigma\epsilon^p}$.
 In this case, we obtain
\begin{equation*}
\dist(0, R_\gamma(x^k)) \leq \Vert x^k-\prox{\gf}{\gamma}{p, \varepsilon_k}(x^k)\Vert+\dist(\prox{\gf}{\gamma}{p, \varepsilon_k}(x^k),\prox{\gf}{\gamma}{p}(x^k))\leq \epsilon+\delta_k.
\end{equation*}
\item \label{th:comcon:dist} It is possible to derive an upper bound for $\dist(0, \partial \gf(\ov{y}^k))$, 
which serves as an approximation to a Mordukhovich critical point of $\gf$.
Let the point $\ov{x}^{k}$ be returned by Algorithm~\ref{alg:inexact}. From 
$\frac{1}{\gamma}\Vert x^k - \ov{y}^k\Vert^{p-2}(x^k - \ov{y}^k)\in \partial \gf(\ov{y}^k)$ 
and by \eqref{eq:ep-approx:dist}, it obtains
$\Vert x^k - \ov{y}^k\Vert\leq \Vert x^k-\ov{x}^{k}\Vert+\Vert \ov{x}^{k} - \ov{y}^k\Vert\leq \epsilon+\delta_k$.
Thus,
\begin{equation*}
\dist(0, \partial \gf(\ov{y}^k))\leq \frac{1}{\gamma}\Vert x^k - \ov{y}^k\Vert^{p-1}\leq \frac{\left(\epsilon+\delta_k\right)^{p-1}}{\gamma}.
\end{equation*}
\end{enumerate}
\end{remark}

\subsection{Global and linear convergence}
\label{subsec:glpbal}
In this subsection, we investigate sufficient conditions guaranteeing the global and the linear convergence of the sequence $\{x^k\}_{k\in \Nz}$ generated by Algorithm~\ref{alg:inexact}. Let us begin with the next generic result. We use the notation $\ov{x}^{k}:=\prox{\gf}{\gamma}{p, \varepsilon_k}(x^k)$ .

\begin{theorem}[Generic global convergence]\label{th:globconv}
Let Assumption~\ref{assum:approx} hold.
Let $\{x^k\}_{k\in \Nz}$ and  $\{\ov{x}^{k}\}_{k\in \Nz}$ be generated by Algorithm~\ref{alg:inexact} and $\{\ov{y}^{k}\}_{k\in \Nz}$ satisfy \eqref{eq:defybar}. 
If  $\sum_{k=0}^{\infty}\Vert R_\gamma^{\varepsilon_k}(x^k)\Vert<+\infty$ and $\sum_{k=0}^{\infty}\Vert d^k\Vert<+\infty$, then all of these sequences
converge to a proximal fixed point.
\end{theorem}
\begin{proof}
It follows from the definition of $x^{k+1}$ and $R_\gamma^{\varepsilon_k}$ that
\begin{align*}
  \sum_{k=0}^{\infty}\Vert x^{k+1}-x^k\Vert \overset{(i)}{\leq} \sum_{k=0}^{\infty}\Vert R_\gamma^{\varepsilon_k}(x^k)\Vert+\sum_{k=0}^{\infty}\Vert d^k\Vert<+\infty,
\end{align*}
where $(i)$ is obtained by $\alpha_k, (1-\alpha_k)\leq 1$.
Hence, $\{x^k\}_{k\in \Nz}$ is a Cauchy sequence and there exists some $\widehat{x} \in\R^n$ such that $x^k\to \widehat{x}$. Since
 $\Vert R_\gamma^{\varepsilon_k}(x^k)\Vert\rightarrow 0$, $\varepsilon_k\downarrow 0$, and $\delta_k\downarrow 0$, it holds that $\ov{x}^{k}, \ov{y}^{k}\to \widehat{x}$.  From 
 Theorem~\ref{th:level-bound+locally uniform}~$\ref{level-bound+locally uniform2:conv}$, this implies $\widehat{x}\in \prox{\gf}{\gamma}{p}(\widehat{x})$, i.e., $\widehat{x}$ is a proximal fixed point.
\end{proof}

Let us consider the following assumptions regarding the KL property and the error conditions, which are assumed to be true throughout the remainder of this section.

\begin{assumption}\label{assum:eps} 
\begin{enumerate}[label=(\textbf{\alph*}), font=\normalfont\bfseries, leftmargin=0.7cm]
 \item \label{assum:eps:a}  The sequence $\{x^k\}_{k\in \Nz}$ generated by Algorithm~\ref{alg:inexact} is\break bounded and $\Omega(x^k)$ denotes the set of its cluster points.

\item \label{assum:eps:b}  $\fgam{\gf}{p}{\gamma}$ satisfies the KL property at each point of $\Omega(x^k)$
with a desingularizing function $\phi$  with the quasi-additivity property described in 
Lemma~\ref{lem:Uniformized KL property}.
\item \label{assum:eps:c}  $ \sum_{k=0}^{\infty} ([\phi'(w_k)]^{-1})^\frac{1}{p-1}<\infty$ for
$w_k:=\frac{2}{p\gamma}\sum_{j=k}^{\infty}\delta_j^p+3\sum_{j=k}^{\infty}\varepsilon_j$ for $k\in \Nz$.
\end{enumerate}
\end{assumption}
\begin{remark}\label{rem:onassum:eps} 
\begin{enumerate}[label=(\textbf{\alph*}), font=\normalfont\bfseries, leftmargin=0.7cm]

\item \label{rem:onassum:eps:b}  
In Assumption~\ref{assum:eps} and in some results of this section, we assume that $x\in \Dom{\partial \fgam{\gf}{p}{\gamma}}$ for some $x$.
Let us discuss the consequences of this assumption.
In \cite[Lemma~3.1]{KecisThibault15}, it was shown that for each $u\in \R^n$ and $z\in \prox{\gf}{\gamma}{p}(u)$, the inclusion 
$\widehat{\partial}\fgam{\gf}{p}{\gamma}(u)\subseteq \left\{\frac{1}{\gamma} \Vert u - z \Vert^{p-2} (u - z)\right\}$ holds. 
Let $\ov{u}\in\R^n$ and $\ov{\zeta}\in \partial\fgam{\gf}{p}{\gamma}(\ov{u})$. By definition, there exist sequences $u^k\to \ov{u}$ and $\zeta^k\in \widehat{\partial}\fgam{\gf}{p}{\gamma}(u^k)$, with $\fgam{\gf}{p}{\gamma}(u^k)\to \fgam{\gf}{p}{\gamma}(\ov{u})$ and $\zeta^k\to \ov{\zeta}$. For each $k$, we have $\zeta^k=\frac{1}{\gamma} \Vert u^k - z^k \Vert^{p-2} (u^k - z^k)$ with $z^k\in \prox{\gf}{\gamma}{p}(u^k)$.
By Theorem~\ref{th:level-bound+locally uniform}~$\ref{level-bound+locally uniform2:conv}$, the sequence $\{z^k\}_{k\in \mathbb{N}}$ has a cluster point 
$\ov{z}\in \prox{\gf}{\gamma}{p}(\ov{u})$, i.e., $\ov{\zeta} = \frac{1}{\gamma} \Vert \ov{u} - \ov{z} \Vert^{p-2} (\ov{u} - \ov{z})$. Therefore,
$\partial\fgam{\gf}{p}{\gamma}(u)\subseteq \left\{\frac{1}{\gamma} \Vert u - z \Vert^{p-2} (u - z)\right\}$ for each $u\in \R^n$ and $z\in  \prox{\gf}{\gamma}{p}(u)$.
 Now, if $x\in \Dom{\partial \fgam{\gf}{p}{\gamma}}$, then for each $y\in \prox{\gf}{\gamma}{p}(x)$, we have
$\partial\fgam{\gf}{p}{\gamma}(x)=\left\{\frac{1}{\gamma} \Vert x - y \Vert^{p-2} (x - y)\right\}$, which, of course, does not necessarily imply the differentiability of $\fgam{\gf}{p}{\gamma}$ at $x$. Nevertheless, for each $y\in \prox{\gf}{\gamma}{p}(x)$, we get
$\dist\left(0, \partial \fgam{\gf}{p}{\gamma}(x)\right)= \frac{1}{\gamma} \Vert x - y \Vert^{p-1}$.

\item \label{rem:onassum:eps:e} Regarding Assumption~\ref{assum:eps}~$\ref{assum:eps:c}$, we provide an example to demonstrate its feasibility.
Assume $p=3$ and $\phi(t)=t^{1-\frac{p-1}{p}}=t^{\frac{1}{3}}$, which is motivated from 
Theorem~\ref{th:KLp:hope}. Setting
$\varepsilon_k = \frac{1}{2^{k+1}}$ and $\delta_k=\left(\varepsilon_k\right)^\frac{1}{p}$, we obtain 
$w_k=\left(3+\frac{2}{p\gamma}\right)\frac{1}{2^k}$ and $([\phi'(t)]^{-1})^\frac{1}{p-1}=\sqrt{3}t^{\frac{1}{3}}$. Hence,
$\sum_{k=0}^{\infty} ([\phi'(w_k)]^{-1})^\frac{1}{p-1}=c\sum_{k=0}^{\infty} \left(\frac{1}{2^{\frac{1}{3}}}\right)^k<\infty$,
with $c=\sqrt{3}\left(3+\frac{2}{p\gamma}\right)^{\frac{1}{3}}$.
\end{enumerate}
\end{remark}

We now address the global convergence of the algorithm under the KL property.
\begin{theorem}[Global convergence under KL property]\label{th:conKL}
Let Assumptions~\ref{assum:approx} and \ref{assum:eps} hold, the sequence
$\{x^k\}_{k\in \Nz}$ and $\{\ov{x}^{k}\}_{k\in \Nz}$ be generated by Algorithm~\ref{alg:inexact} and $\{\ov{y}^{k}\}_{k\in \Nz}$  satisfy \eqref{eq:defybar}. 
If for some constant $D\geq 0$, we have $\Vert d^k\Vert\leq D \Vert R_\gamma^{\varepsilon_k}(x^k)\Vert$ for all $k$, then these sequences converge to a proximal fixed point.
\end{theorem}
\begin{proof}
By the definition of HOME,
$\fgam{\gf}{p}{\gamma}(x^{k+1})\leq \fgam{\gf}{p, \varepsilon_{k+1}}{\gamma}(x^{k+1})$ and by \eqref{eq:ep-approx:fun}, we have
$\fgamepsk{\gf}{p}{\gamma}(x^k)\leq \fgam{\gf}{p}{\gamma}(x^k)+\varepsilon_k$.
Hence, \eqref{eq:alg:ingrad:upd} implies 
\begin{align}\label{eq:th:conKL:neweq1}
\fgam{\gf}{p}{\gamma}(x^{k+1})\leq\fgamepsko{\gf}{p}{\gamma}(\widehat{x}^{k+1})&\leq
\fgamepsk{\gf}{p}{\gamma}(x^k)-\sigma\Vert R_\gamma^{\varepsilon_k}(x^k)\Vert^{p} +\varepsilon_{k}+\varepsilon_{k+1}
\notag\\
&\leq \fgam{\gf}{p}{\gamma}(x^k)+\varepsilon_k -\sigma\Vert R_\gamma^{\varepsilon_k}(x^k)\Vert^{p} +\varepsilon_{k}+\varepsilon_{k+1}.
\end{align}
On the other hand, for each $k\in \Nz$, \eqref{eq:ep-approx:dist} implies
\begin{equation*}
\Vert x^k - \ov{y}^k\Vert \leq \Vert x^k - \ov{x}^k\Vert +\Vert  \ov{x}^k - \ov{y}^k\Vert\leq\Vert x^k - \ov{x}^k\Vert +\delta_k=\Vert R_\gamma^{\varepsilon_k}(x^k)\Vert +\delta_k.
\end{equation*}
From Lemma~\ref{lemma:ineq:inequality p}~\ref{lemma:ineq:inequality p:ineq4}, we get $2^{1-p}\Vert x^k - \ov{y}^k\Vert^p \leq\Vert R_\gamma^{\varepsilon_k}(x^k)\Vert^p +\delta_k^p$.
Now, \eqref{eq:th:conKL:neweq1} implies
\begin{equation}\label{eq:th:conKL:a}
\fgam{\gf}{p}{\gamma}(x^{k+1})\leq \fgam{\gf}{p}{\gamma}(x^k)+\varepsilon_k -\sigma 2^{1-p} \Vert x^k - \ov{y}^k\Vert^p +\sigma \delta_k^p+\varepsilon_{k}+\varepsilon_{k+1}.
\end{equation}
On the other hand, 
\begin{align}\label{eq:th:conKL:a2}
\Vert x^{k+1}- x^k\Vert&=\Vert (1-\alpha_k)\ov{x}^{k}+\alpha_k(x^k+d^k)-x^k\Vert
\nonumber\\&\overset{(i)}{\leq}  \Vert R_\gamma^{\varepsilon_k}(x^k)\Vert+\Vert d^k\Vert
 \leq (1+D)\Vert R_\gamma^{\varepsilon_k}(x^k)\Vert,
 \end{align}
 where $(i)$ is obtained from $(1-\alpha_k), \alpha_k\leq 1$, leading to
\begin{align*}
\Vert x^{k+1}-x^k\Vert^p \leq (1+D)^p\Vert  x^k - \ov{x}^k\Vert^p&\leq (1+D)^p\left(\Vert  x^k - \ov{y}^k\Vert+\Vert \ov{y}^k - \ov{x}^k\Vert\right)^p
\\&\leq 2^{p-1} (1+D)^p\left(\Vert  x^k - \ov{y}^k\Vert^p+\delta_k^p\right),
\end{align*}
i.e., $
-\Vert  x^k - \ov{y}^k\Vert^p\leq -\frac{2^{1-p}}{(1+D)^p}\Vert x^{k+1}-x^k\Vert^p+\delta_k^p.
$
Using this and the inequalities \eqref{eq:th:conKL:a}, we come to
\begin{align*}
&\fgam{\gf}{p}{\gamma}(x^{k+1})\leq \fgam{\gf}{p}{\gamma}(x^k)+\varepsilon_k -\sigma 2^{-p} \Vert x^k - \ov{y}^k\Vert^p  -\sigma 2^{-p} \Vert x^k - \ov{y}^k\Vert^p+\sigma \delta_k^p+\varepsilon_{k}+\varepsilon_{k+1}
\\&\leq \fgam{\gf}{p}{\gamma}(x^k)-\sigma 2^{-p} \Vert x^k - \ov{y}^k\Vert^p  -\sigma 2^{-p} 
\frac{2^{1-p}}{(1+D)^p}\Vert x^{k+1}-x^k\Vert^p
\\&~~+\sigma 2^{-p} \delta_k^p
+\sigma \delta_k^p+2\varepsilon_{k}+\varepsilon_{k+1}
\\&=\fgam{\gf}{p}{\gamma}(x^k) -\sigma 2^{-p} \Vert x^k - \ov{y}^k\Vert^p  - 
\frac{ 2^{1-2p}\sigma}{(1+D)^p}\Vert x^{k+1}-x^k\Vert^p
+ (1+2^{-p})\sigma \delta_k^p
+2\varepsilon_{k}+\varepsilon_{k+1}.
\end{align*}
Defining $v_k:=(1+2^{-p})\sigma\sum_{j=k}^{\infty}\delta_j^p+2\sum_{j=k}^{\infty}\varepsilon_j+\sum_{j=k+1}^{\infty}\varepsilon_j$, this implies,
\begin{equation}\label{eq:th:conKL:zz1}
\fgam{\gf}{p}{\gamma}(x^{k+1})+v_{k+1}\leq \fgam{\gf}{p}{\gamma}(x^k)+v_k -\sigma 2^{-p} \Vert x^k - \ov{y}^k\Vert^p  - 
\frac{ 2^{1-2p}\sigma}{(1+D)^p}\Vert x^{k+1}-x^k\Vert^p.
\end{equation}
Hence, the Lyapunov $\{\fgam{\gf}{p}{\gamma}(x^k)+v_k\}_{k\in \Nz}$ is non-increasing and bounded from below by $\gf^*$. Thus, it is convergent to some $\fv\in \R$. 
As $\{x^k\}_{k\in \Nz}$ is bounded, the set $\Omega(x^k)$ is nonempty and compact.
Let $\widehat{x}\in \Omega(x^k)$ be arbitrary. 
Hence, from  Theorem~\ref{th:comcon}~$\ref{th:comcon:funconvb}$,
\begin{equation}\label{eq:th:conKL:zz1-1}
\lim_{k\to \infty}\fgam{\gf}{p}{\gamma}(x^k)=\lim_{k\to \infty}\left(\fgam{\gf}{p}{\gamma}(x^k)+v_k\right)=\fgam{\gf}{p}{\gamma}(\widehat{x})=\fv.
\end{equation}
 Additionally, from Theorem~\ref{th:comcon}~$\ref{th:comcon:funconstant}$, $\fgam{\gf}{p}{\gamma}$ is constant on $\Omega(x^k)$.
Hence, from Lemma~\ref{lem:Uniformized KL property},
there exist $r>0$, $\eta>0$, and a desingularizing function $\phi$, which satisfies the quasi-additivity property, such that for all $x\in X$ with
\begin{equation*}
X:=\Dom{\partial \fgam{\gf}{p}{\gamma}}\cap \{x\in \R^n \mid \dist(x, \Omega(x^k))<r\}\cap\{x\in \R^n \mid 0<\vert \fgam{\gf}{p}{\gamma}(x) - \fgam{\gf}{p}{\gamma}(\widehat{x})\vert<\eta\},
\end{equation*}
we obtain
\begin{equation}\label{eq:th:globconv:klI:a}
\phi'\left(\vert \fgam{\gf}{p}{\gamma}(x) - \fgam{\gf}{p}{\gamma}(\widehat{x})\vert\right)\dist\left(0, \partial \fgam{\gf}{p}{\gamma}(x)\right)\geq 1.
\end{equation}
Since $\Omega(x^k)$ is compact and $\gf$ is bounded below,  \cite[Theorem~3.1~(b)]{KecisThibault15} 
implies that $\fgam{\gf}{p}{\gamma}$ is Lipschitz on 
$\widehat{X}:=\{x\in \R^n \mid \dist(x, \Omega(x^k))<r\}$. Hence, $\widehat{X}\subseteq \Dom{\partial \fgam{\gf}{p}{\gamma}}$ (see \cite[Theorem~1.22]{Mordukhovich2018}).
We have $v_k\downarrow 0$ and
Theorem~\ref{th:comcon}~$\ref{th:comcon:resconv}$ and \eqref{eq:th:conKL:a2} yield $\Vert x^{k+1}- x^k\Vert\to 0$, i.e., together with \eqref{eq:th:conKL:zz1-1}, for some $\widehat{k}>0$ and for each $k\geq \widehat{k}$,
$v_k<\frac{\eta}{2}$,
$ \dist(x^k, \Omega(x^k))<r$ and 
$\vert\fgam{\gf}{p}{\gamma}(x^k)-\fgam{\gf}{p}{\gamma}(\widehat{x})\vert<\frac{\eta}{2}$.

Define $\Delta_k:=\phi\left(\fgam{\gf}{p}{\gamma}(x^k)-\fv+v_k\right)$. 
Note that $\fgam{\gf}{p}{\gamma}(x^k)+v_k$ is a non-increasing sequence that tends to $\fv$. Thus, $\fgam{\gf}{p}{\gamma}(x^k)-\fv+v_k\geq 0$ 
and 
\begin{equation*}
\fgam{\gf}{p}{\gamma}(x^k)-\fv+v_k\leq\vert\fgam{\gf}{p}{\gamma}(x^k)-\fv\vert+v_k<\eta.
\end{equation*}
Consequently, $\Delta_k$ is well-defined. Moreover,
\begin{align}\label{eq:th:conKL:z1}
  \Delta_k-\Delta_{k+1} & \overset{(i)}{\geq} \phi'\left(\fgam{\gf}{p}{\gamma}(x^k)-\fv+v_k\right)\left[\fgam{\gf}{p}{\gamma}(x^k)+v_k-\fgam{\gf}{p}{\gamma}(x^{k+1})-v_{k+1}\right]
\nonumber  \\&\overset{(ii)}{\geq}  \phi'\left(\vert\fgam{\gf}{p}{\gamma}(x^k)-\fv\vert+v_k\right)\left[\sigma 2^{-p} \Vert x^k - \ov{y}^k\Vert^p  + 
\frac{ 2^{1-2p}\sigma}{(1+D)^p}\Vert x^{k+1}-x^k\Vert^p\right],
\end{align}
where $(i)$ obtains from the concavity of $\phi$, $(ii)$ is from the monotonically decreasing nature of $\phi'$, and from \eqref{eq:th:conKL:zz1}.
\\
If $\fgam{\gf}{p}{\gamma}(x^k)-\fv=0$, then 
$\phi'\left(\vert\fgam{\gf}{p}{\gamma}(x^k)-\fv\vert+v_k\right)=\phi'\left(v_k\right)$. This and \eqref{eq:th:conKL:z1} ensure
\begin{align}\label{eq:th:conKL:z2}
\sigma 2^{-p} \Vert x^k - \ov{y}^k\Vert^p  &+ \frac{ 2^{1-2p}\sigma}{(1+D)^p}\Vert x^{k+1}-x^k\Vert^p\leq 
\left(\Delta_k-\Delta_{k+1} \right)\left([\phi'(v_k)]^{-1}\right)\nonumber\\
&\overset{(i)}{\leq} \left(\Delta_k-\Delta_{k+1} \right)\left(\frac{1}{\gamma}\Vert x^k-\ov{y}^k\Vert^{p-1}+[\phi'(v_k)]^{-1}\right),
\end{align}
where $(i)$ follows from the property $\phi' > 0$ that together \eqref{eq:th:conKL:z1} implies $\Delta_k - \Delta_{k+1} \geq 0$.
If $\fgam{\gf}{p}{\gamma}(x^k)-\fv\neq 0$, from \eqref{eq:th:conKL:z1} and quasi-additivity property, by increasing $\widehat{k}$ if necessary, there exists some $c_\phi>0$ such that
\begin{align*}
  \Delta_k&-\Delta_{k+1}\\&\geq \frac{\sigma(c_\phi)^{-1}}{[\phi'\left(\vert\fgam{\gf}{p}{\gamma}(x^k)-\fv\vert\right)]^{-1}+[\phi'(v_k)]^{-1}}\left[ 2^{-p} \Vert x^k - \ov{y}^k\Vert^p  + 
\frac{2^{1-2p}}{(1+D)^p}\Vert x^{k+1}-x^k\Vert^p\right].
\end{align*}
Then, from \eqref{eq:th:globconv:klI:a},
\begin{align}\label{eq:th:conKL:z3}
\sigma 2^{-p} \Vert x^k - \ov{y}^k\Vert^p  &+ \frac{ 2^{1-2p}\sigma}{(1+D)^p}\Vert x^{k+1}-x^k\Vert^p
\nonumber\\&\leq c_\phi\left(\Delta_k-\Delta_{k+1} \right)\left([\phi'\left(\vert\fgam{\gf}{p}{\gamma}(x^k)-\fv\vert\right)]^{-1}+[\phi'(v_k)]^{-1}\right)
\nonumber\\&\leq c_\phi\left(\Delta_k-\Delta_{k+1} \right)\left(\dist\left(0, \partial \fgam{\gf}{p}{\gamma}(x^k)\right)+[\phi'(v_k)]^{-1}\right)
\nonumber\\&\overset{(i)}{=}c_\phi\left(\Delta_k-\Delta_{k+1} \right)\left(\frac{1}{\gamma}\Vert x^k-\ov{y}^k\Vert^{p-1}+[\phi'(v_k)]^{-1}\right),
\end{align}
where $(i)$ is explained in Remark~\ref{rem:onassum:eps}~$\ref{rem:onassum:eps:b}$.
Thus, from \eqref{eq:th:conKL:z2} and \eqref{eq:th:conKL:z3}, by setting $\widehat{m}:=\bs\max\{1, c_\phi\}$,  for each $k\geq \widehat{k}$, we obtain
\begin{align}\label{eq:th:conKL:z4}
  \Vert x^k - \ov{y}^k\Vert^p  &+ \frac{ 2^{1-p}}{(1+D)^p}\Vert x^{k+1}-x^k\Vert^p
\nonumber\\&\leq \widehat{m}\sigma^{-1}2^{p}\left(\Delta_k-\Delta_{k+1} \right)\left(\frac{1}{\gamma}\Vert x^k-\ov{y}^k\Vert^{p-1}+[\phi'(v_k)]^{-1}\right),
\end{align}
which by using Lemma~\ref{lemma:ineq:inequality p}~$\ref{lemma:ineq:inequality p:ineq4}$ implies
\begin{align}\label{eq:Refeq}
\Vert x^k - \ov{y}^k\Vert &+ \frac{ 2^{\frac{1-p}{p}}}{1+D}\Vert x^{k+1}-x^k\Vert
\nonumber\\&\leq\left[\frac{2^{2p-1}\widehat{m}}{\sigma\gamma}\left(\Delta_k-\Delta_{k+1} \right)\left(\Vert x^k-\ov{y}^k\Vert^{p-1}+\gamma [\phi'(v_k)]^{-1}\right)\right]^{\frac{1}{p}}.
\end{align}
Invoking Young's inequality, if $a\geq 0$ and $b\geq 0$ and $p, q>1$ such that $\frac{1}{p}+\frac{1}{q}=1$ 
(i.e., $q=\frac{p}{p-1}$) we come to
\begin{equation*}
(ab)^\frac{1}{p}\leq \left(\frac{a^p}{p}+\frac{b^q}{q}\right)^{\frac{1}{p}}\overset{(i)}{\leq} \frac{a}{p^{\frac{1}{p}}}+\frac{b^{\frac{q}{p}}}{q^{\frac{1}{p}}}\leq a+b^{\frac{q}{p}}=a+b^{\frac{1}{p-1}},
\end{equation*}
where $(i)$ comes from \eqref{eq:intrp:p01}.
Now, if $p\geq 2$, then $\frac{1}{p-1}\in (0, 1]$ and \eqref{eq:Refeq} implies
\begin{align*}
\Vert x^k - \ov{y}^k\Vert&+ \frac{ 2^{\frac{1-p}{p}}}{1+D}\Vert x^{k+1}-x^k\Vert\\&\leq\left[\frac{2^{2p-1}\widehat{m}}{\sigma\gamma}\left(\Delta_k-\Delta_{k+1} \right)\left(\Vert x^k-\ov{y}^k\Vert^{p-1}+\gamma [\phi'(v_k)]^{-1}\right)\right]^{\frac{1}{p}}
\\&\leq\frac{2^{2p-1}\widehat{m}}{\sigma\gamma}\left(\Delta_k-\Delta_{k+1} \right)+\left(\Vert x^k-\ov{y}^k\Vert^{p-1}+\gamma [\phi'(v_k)]^{-1}\right)^\frac{1}{p-1}
\\&\leq\frac{2^{2p-1}\widehat{m}}{\sigma\gamma}\left(\Delta_k-\Delta_{k+1} \right)+\Vert x^k-\ov{y}^k\Vert+(\gamma [\phi'(v_k)]^{-1})^\frac{1}{p-1},
\end{align*}
leading to
\begin{align}\label{eq:th:conKL:c}
\Vert x^{k+1}-x^k\Vert\leq \frac{2^{\frac{2p^2-1}{p}}(1+D)\widehat{m}}{\sigma\gamma}\left(\Delta_k-\Delta_{k+1} \right)+ \frac{1+D}{2^{\frac{1-p}{p}}}(\gamma [\phi'(v_k)]^{-1})^\frac{1}{p-1}.
\end{align}
If $p\in (1, 2)$, then $\frac{1}{p-1}>1$ and \eqref{eq:Refeq} implies
\begin{align*}
\Vert x^k - \ov{y}^k\Vert&+ \frac{ 2^{\frac{1-p}{p}}}{1+D}\Vert x^{k+1}-x^k\Vert\\&\leq\left[\frac{2^{2p-1}\widehat{m}}{\sigma\gamma}\left(\Delta_k-\Delta_{k+1} \right)\left(\Vert x^k-\ov{y}^k\Vert^{p-1}+\gamma [\phi'(v_k)]^{-1}\right)\right]^{\frac{1}{p}}
\\&=\left[\frac{2^{p+1}\widehat{m}}{\sigma\gamma}\left(\Delta_k-\Delta_{k+1} \right)2^{p-2}\left(\Vert x^k-\ov{y}^k\Vert^{p-1}+\gamma [\phi'(v_k)]^{-1}\right)\right]^{\frac{1}{p}}
\\&\leq\frac{2^{p+1}\widehat{m}}{\sigma\gamma}\left(\Delta_k-\Delta_{k+1} \right)+2^{\frac{p-2}{p-1}}\left(\Vert x^k-\ov{y}^k\Vert^{p-1}+\gamma [\phi'(v_k)]^{-1}\right)^{\frac{1}{p-1}}
\\&\overset{(i)}{\leq}\frac{2^{p+1}\widehat{m}}{\sigma\gamma}\left(\Delta_k-\Delta_{k+1} \right)+\Vert x^k-\ov{y}^k\Vert+(\gamma [\phi'(v_k)]^{-1})^\frac{1}{p-1},
\end{align*}
where for $(i)$, we use Lemma~\ref{lemma:ineq:inequality p}~$\ref{lemma:ineq:inequality p:ineq4}$. Hence,
\begin{align}\label{eq:th:conKL:d}
\Vert x^{k+1}-x^k\Vert\leq \frac{2^{\frac{p^2+2p-1}{p}}(1+D)\widehat{m}}{\sigma\gamma}\left(\Delta_k-\Delta_{k+1} \right)+ \frac{1+D}{2^{\frac{1-p}{p}}}(\gamma [\phi'(v_k)]^{-1})^\frac{1}{p-1}.
\end{align}
It follows from \eqref{eq:th:conKL:c} and \eqref{eq:th:conKL:d} that for each $k \geq \widehat{k}$ and $p > 1$, 
\begin{equation}\label{eq:th:conKL:e}
\Vert x^{k+1}-x^k\Vert\leq \varpi(p)\left(\Delta_k-\Delta_{k+1} \right)+ \frac{1+D}{2^{\frac{1-p}{p}}}(\gamma [\phi'(v_k)]^{-1})^\frac{1}{p-1},
\end{equation}
where $\varpi(p):=\frac{2^{\frac{p^2+2p-1}{p}}(1+D)\widehat{m}}{\sigma\gamma}$ if $p\in (1, 2)$ and 
$\varpi(p):= \frac{2^{\frac{2p^2-1}{p}}(1+D)\widehat{m}}{\sigma\gamma}$ if $p\geq 2$.
\\
On the other hand, we note that
from the chosen $\sigma$ in Algorithm~\ref{alg:inexact},
$\sigma\leq \frac{1}{p\gamma}$. Now, $p>1$ implies $1+2^{-p}\leq 2$. i.e.,
\[
v_k=(1+2^{-p})\sigma\sum_{j=k}^{\infty}\delta_j^p+2\sum_{j=k}^{\infty}\varepsilon_j+\sum_{j=k+1}^{\infty}\varepsilon_j
\leq \frac{2}{p\gamma}\sum_{j=k}^{\infty}\delta_j^p+3\sum_{j=k}^{\infty}\varepsilon_j=w_k,\]
where $w_k$ was introduced in Assumption~\ref{assum:eps}~$\ref{assum:eps:c}$.~~
From monotonically decreasing of $\phi'$, which comes from concavity of $\phi$, we have $\phi'(w_k)\leq \phi'(v_k)$ that implies $[\phi'(v_k)]^{-1}\leq [\phi'(w_k)]^{-1}$. Together with Assumption~\ref{assum:eps}~$\ref{assum:eps:c}$, this implies that 
$\sum_{k=0}^{\infty}([\phi'(v_k)]^{-1})^\frac{1}{p-1}<\infty$.  From \eqref{eq:th:conKL:e}, we obtain
\begin{align*}
\sum_{k=\widehat{k}}^{\infty}\Vert x^{k+1}-x^k\Vert&\leq \varpi(p)\sum_{k=\widehat{k}}^{\infty}\left(\Delta_k-\Delta_{k+1} \right)+ \frac{1+D}{2^{\frac{1-p}{p}}}\gamma^\frac{1}{p-1} \sum_{k=\widehat{k}}^{\infty}([\phi'(v_k)]^{-1})^\frac{1}{p-1}
\\& \overset{(i)}{=} \varpi(p)\Delta_{\widehat{k}}+ \frac{1+D}{2^{\frac{1-p}{p}}}\gamma^\frac{1}{p-1} \sum_{k=\widehat{k}}^{\infty} ([\phi'(v_k)]^{-1})^\frac{1}{p-1}<\infty,
\end{align*}
where $(i)$  follows from the fact that $\phi$ is continuous and $\fgam{\gf}{p}{\gamma}(x^k) - \fv + v_k\to 0$, which implies $\Delta_k \to 0$.
Hence, $\{x^k\}_{k\in \Nz}$ is a Cauchy sequence and  $x^k\to \widehat{x}$. Since
 $\Vert R_\gamma^{\varepsilon_k}(x^k)\Vert\rightarrow 0$ and $\delta_k\downarrow 0$, we have $\ov{x}^{k}, \ov{y}^{k}\to \widehat{x}$.  From Theorem~\ref{th:level-bound+locally uniform}~$\ref{level-bound+locally uniform2:conv}$, this implies $\widehat{x}\in \prox{\gf}{\gamma}{p}(\widehat{x})$, i.e., $\widehat{x}$ is a proximal fixed point.
\end{proof}

To establish the linear convergence of $\{x^k\}_{k\in\Nz}$, we further consider the next assumption, which will be assumed throughout the remainder of this section.

\begin{assumption}\label{assum:linconv}
Let $\omega\in (0, 1)$ and $\{\beta_k\}_{k\in \Nz}$ be a sequence of non-increasing positive scalars such that $\ov{\beta}:=\sum_{k=0}^{\infty} \beta_k<\infty$. We choose $\delta_k$ and $\varepsilon_k$ such that 
\begin{equation}\label{eq:ep-approx:oper}
\varepsilon_k^{\frac{1}{p}}, \delta_k\leq \bs\min\left\{\beta_k,\omega \Vert x^k -\ov{x}^k\Vert, \mathop{\bs{\min}}\left\{\beta_j \Vert x^i - \ov{x}^i\Vert\mid 0\leq i\leq j\leq k\right\} \right\},
\end{equation}
where $\ov{x}^{i}:=\prox{\gf}{\gamma}{p, \varepsilon_i}(x^i)$.
\end{assumption}

\begin{theorem}[Linear convergence]\label{th:linconv}
Let Assumptions~\ref{assum:approx}, \ref{assum:eps}, and \ref{assum:linconv} hold,
 $\{x^k\}_{k\in \Nz}$ and  $\{\ov{x}^{k}\}_{k\in \Nz}$ be generated by Algorithm~\ref{alg:inexact} and $\{\ov{y}^{k}\}_{k\in \Nz}$ satisfies \eqref{eq:defybar}. 
Assume that for some constant $D\geq 0$, we have $\Vert d^k\Vert\leq D \Vert R_\gamma^{\varepsilon_k}(x^k)\Vert$ for all $k$.
If $\fgam{\gf}{p}{\gamma}$ satisfies the KL inequality with an exponent $\theta=\frac{p-1}{p}$ at each point of $\Omega(x^k)$, then all of these sequences converge R-linearly to a proximal fixed point.
\end{theorem}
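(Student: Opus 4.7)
The plan is to build on the global convergence established in Theorem~\ref{th:conKL} and extract a geometric contraction for the Lyapunov functional $U_k := \fgam{\gf}{p}{\gamma}(x^k) + v_k - \fv$, where $\fv$ is the common limit value from Theorem~\ref{th:comcon}~$\ref{th:comcon:funconvb}$ and $v_k$ is the error tail used in the proof of Theorem~\ref{th:conKL}. With KL exponent $\theta=(p-1)/p$, the uniformized desingularizing function provided by Lemma~\ref{lem:Uniformized KL property} can be taken as $\phi(t)=c\, t^{1/p}$, which is concave, continuously differentiable on $(0,\eta)$, and satisfies the quasi-additivity property with $c_\phi=1$ by subadditivity of $t\mapsto t^{(p-1)/p}$. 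Since $([\phi'(t)]^{-1})^{1/(p-1)}$ is proportional to $t^{1/p}$ and Assumption~\ref{assum:linconv} forces $\delta_k^p+\varepsilon_k=O(\beta_k^p)$ with $\{\beta_k\}_{k\in\Nz}$ summable, Assumption~\ref{assum:eps}~$\ref{assum:eps:c}$ is automatic, so Theorem~\ref{th:conKL} applies and yields $x^k,\ov{x}^k,\ov{y}^k\to\widehat{x}$ for some proximal fixed point $\widehat{x}$, with $U_k$ non-increasing and tending to $0$.

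I would next dominate the noise $v_k$ by $U_k$. Setting $r_k:=\|R_\gamma^{\varepsilon_k}(x^k)\|=\|x^k-\ov{x}^k\|$, Assumption~\ref{assum:linconv} yields $\|\ov{x}^k-\ov{y}^k\|\leq\delta_k\leq\omega\, r_k$, so $\|x^k-\ov{y}^k\|\geq(1-\omega)r_k$. The descent inequality \eqref{eq:th:conKL:zz1} then gives $r_k^p\leq\tau(U_k-U_{k+1})$ for an explicit constant $\tau$; telescoping produces $\sum_{j\geq k}r_j^p\leq\tau U_k$. Substituting the bounds $\delta_j^p\leq\omega^p r_j^p$ and $\varepsilon_j\leq\omega^p r_j^p$ into the definition of $v_k$ yields the crucial estimate $v_k\leq\lambda U_k$, with $\lambda$ of order $\omega^p$; for $\omega$ sufficiently small, $\lambda<1$.

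The contraction step uses the KL inequality at $x^k$. By Remark~\ref{rem:onassum:eps}~$\ref{rem:onassum:eps:b}$, $\dist(0,\partial\fgam{\gf}{p}{\gamma}(x^k))=\tfrac{1}{\gamma}\|x^k-\ov{y}^k\|^{p-1}$, so the uniformized KL inequality gives, for all sufficiently large $k$, $|\fgam{\gf}{p}{\gamma}(x^k)-\fv|^{(p-1)/p}\leq\tfrac{c}{p\gamma}\|x^k-\ov{y}^k\|^{p-1}$, equivalently $|\fgam{\gf}{p}{\gamma}(x^k)-\fv|\leq C_{\mathrm{KL}}\|x^k-\ov{y}^k\|^p$. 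Since $U_k=(\fgam{\gf}{p}{\gamma}(x^k)-\fv)+v_k\geq 0$ and $v_k\leq\lambda U_k<U_k$, the case $\fgam{\gf}{p}{\gamma}(x^k)<\fv$ forces $U_k=0$ (the algorithm has already reached the limit value); otherwise $(1-\lambda)U_k\leq \fgam{\gf}{p}{\gamma}(x^k)-\fv\leq C_{\mathrm{KL}}\|x^k-\ov{y}^k\|^p$. Combined with the descent bound $U_k-U_{k+1}\geq\sigma 2^{-p}\|x^k-\ov{y}^k\|^p$, this yields $U_{k+1}\leq\rho U_k$ with $\rho:=1-\sigma 2^{-p}(1-\lambda)/C_{\mathrm{KL}}\in(0,1)$.

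Finally, the linear decay of $U_k$ propagates to the iterates. From $r_k^p\leq\tau(U_k-U_{k+1})$ we get $r_k=O(\rho^{k/p})$, the descent also bounds $\|x^{k+1}-x^k\|^p$ by a multiple of $U_k-U_{k+1}$, hence $\|x^{k+1}-x^k\|=O(\rho^{k/p})$, and telescoping yields $\|x^k-\widehat{x}\|\leq\sum_{j\geq k}\|x^{j+1}-x^j\|=O(\rho^{k/p})$. The same rate transfers to $\ov{x}^k$ and $\ov{y}^k$ via $\|x^k-\ov{x}^k\|=r_k$ and $\|\ov{x}^k-\ov{y}^k\|\leq\delta_k\leq\omega r_k$. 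The principal obstacle is establishing $v_k\leq\lambda U_k$ with $\lambda<1$: the naive bound gives $\lambda\propto\omega^p/(1-\omega)^p$ with the proportionality constant inherited from the descent parameters, so one must either verify that $\omega\in(0,1)$ in Assumption~\ref{assum:linconv} is implicitly small enough, or exploit the cascaded double-index bound $\delta_k\leq\min_{0\leq i\leq j\leq k}\{\beta_j\|x^i-\ov{x}^i\|\}$ to tighten the estimate without restricting $\omega$.
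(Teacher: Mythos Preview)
Your overall architecture is sound: working with the Lyapunov quantity $U_k=\fgam{\gf}{p}{\gamma}(x^k)+v_k-\fv$, combining the descent estimate~\eqref{eq:th:conKL:zz1} with the KL inequality, and then propagating the rate to the iterates is the right strategy. Two issues, one minor and one genuine.

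The minor one: Assumption~\ref{assum:eps}~$\ref{assum:eps:c}$ is \emph{not} automatic from Assumption~\ref{assum:linconv}; summability of $\{\beta_k\}$ does not in general yield $\sum_k w_k^{1/p}<\infty$ (compare the extra hypotheses in Corollary~\ref{cor:linconv}). This is harmless here because Assumption~\ref{assum:eps} is already a hypothesis of the theorem, so Theorem~\ref{th:conKL} applies directly.

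The genuine gap is the step $v_k\le\lambda U_k$ with $\lambda<1$. Your own computation gives $\lambda$ proportional to $(\omega/(1-\omega))^p$ with a factor of order $\sigma^{-1}2^p[(1+2^{-p})\sigma+3]$, and Assumption~\ref{assum:linconv} only asks $\omega\in(0,1)$; for moderate or large $\omega$ this constant is not $<1$. You flag this as the principal obstacle, but the suggested fix via the double-index term in~\eqref{eq:ep-approx:oper} is left unexecuted, and your contraction argument as written (in particular the dichotomy handling $\fgam{\gf}{p}{\gamma}(x^k)<\fv$) collapses once $\lambda\ge 1$.

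The paper resolves this precisely by exploiting the third term in~\eqref{eq:ep-approx:oper}: for every $j\ge k$ one has $\delta_j,\varepsilon_j^{1/p}\le\beta_j\|x^k-\ov{x}^k\|$, whence $\sum_{j\ge k}\delta_j\le\ov{\beta}\,r_k$ and $\sum_{j\ge k}\varepsilon_j^{1/p}\le\ov{\beta}\,r_k$, so $v_k^{1/p}\le c\,r_k$ with a constant that need not be small. The paper then works with $\Delta_k=\phi(U_k)=cU_k^{1/p}$ and $B_k:=\sum_{i\ge k}r_i$ rather than $U_k$: KL plus $v_k^{1/p}\le c\,r_k$ give $\Delta_k\le c_1 r_k$; substituting $[\phi'(v_k)]^{-1}\le c_2\|x^k-\ov{y}^k\|^{p-1}$ (only here is $\delta_k\le\omega r_k$ used) into~\eqref{eq:th:conKL:z4} gives $c_3\|x^k-\ov{y}^k\|\le\Delta_k-\Delta_{k+1}$; chaining with $(1-\omega)r_k\le\|x^k-\ov{y}^k\|$ yields $B_k\le c_4(B_k-B_{k+1})$, i.e., $Q$-linear decay of $B_k$, which dominates all three sequences. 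Your $U_k$-route can be salvaged the same way: once the double-index bound gives $v_k\le c_0 r_k^p$ (with $c_0$ arbitrary), use $U_k\le|\fgam{\gf}{p}{\gamma}(x^k)-\fv|+v_k\le(C_{\mathrm{KL}}(1+\omega)^p+c_0)r_k^p$ directly together with $r_k^p\le\tau(U_k-U_{k+1})$ to obtain $U_k\le C(U_k-U_{k+1})$, bypassing the need for $\lambda<1$ altogether.
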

\begin{proof}
Theorem~\ref{th:conKL} implies the convergence of $\{x^k\}_{k\in \Nz}$, $\{\ov{x}^{k}\}_{k\in \Nz}$, and $\{\ov{y}^{k}\}_{k\in \Nz}$ to a proximal fixed point $\widehat{x}$.  We define $B_k:=\sum_{i\geq k}\Vert x^i - \ov{x}^i\Vert$. Following a similar reasoning as in \eqref{eq:th:conKL:a2}, we get
\begin{align*}
&\Vert x^k - \widehat{x}\Vert\leq \sum_{i\geq k} \Vert x^{i+1} - x^i\Vert \leq (1+D)B_k,\\
& \Vert \ov{x}^k - \widehat{x}\Vert\leq \sum_{i\geq k} \Vert \ov{x}^{i+1} - \ov{x}^i\Vert
\leq \sum_{i\geq k}
 \left[\Vert \ov{x}^{i+1} - x^{i+1}\Vert+\Vert x^{i+1} - x^{i}\Vert+ \Vert \ov{x}^{i} - x^i \Vert \right]
 \\&\hspace{4.2cm}\leq (3+D)B_k,\\
& \Vert \ov{y}^k - \widehat{x}\Vert\leq \Vert \ov{y}^k - \ov{x}^k\Vert+\Vert \ov{x}^k - \widehat{x}\Vert\leq \delta_k +(3+D)B_k 
\\&\qquad\qquad\leq \omega \Vert x^k - \ov{x}^k\Vert+(3+D)B_k\leq (\omega+3+D)B_k.
\end{align*}
We now show that the sequence $\{B_k\}_{k\in \Nz}$ converges at an asymptotically linear rate. By defining  $\phi(t) = c t^{1 - \theta}$, where $c>0$, and using  \eqref{eq:th:globconv:klI:a}, for enough large $k$, we get
\begin{equation}\label{eq:th:linconv:d1}
\left[\frac{(1-\theta)c}{\gamma}\Vert x^k-\ov{y}^k\Vert^{p-1}\right]^{\frac{1}{\theta}}\geq \vert \fgam{\gf}{p}{\gamma}(x^k) - \fv\vert.
\end{equation}
Let $\Delta_k := \phi\left(\fgam{\gf}{p}{\gamma}(x^k) - \fv + v_k\right)$, as defined in the proof of Theorem~\ref{th:conKL}, i.e.,
\begin{align*}
\Delta_k&= c (\fgam{\gf}{p}{\gamma}(x^k)-\fv+v_k)^{1 - \theta}
\leq c (\vert\fgam{\gf}{p}{\gamma}(x^k)-\fv\vert+v_k)^{1 - \theta}
\\&\overset{(i)}{\leq} c \vert\fgam{\gf}{p}{\gamma}(x^k)-\fv\vert^{1 - \theta}+c v_k^{1 - \theta}
\overset{(ii)}{\leq} c \left[\frac{(1-\theta)c}{\gamma}\Vert x^k-\ov{y}^k\Vert^{p-1}\right]^{\frac{1-\theta}{\theta}}+c v_k^{1 - \theta},
\end{align*}
where $(i)$ follows from \eqref{eq:intrp:p01} and $(ii)$  from \eqref{eq:th:linconv:d1}.
Since $\theta=\frac{p-1}{p}$, we have $\frac{1-\theta}{\theta}=\frac{1}{p-1}$ and $1-\theta= \frac{1}{p}$. Thus,
\begin{align}\label{eq:th:linconv:d2}
\Delta_k\leq  c \left[\frac{(1-\theta)c}{\gamma}\Vert x^k-\ov{y}^k\Vert^{p-1}\right]^{\frac{1}{p-1}}+c v_k^{\frac{1}{p}}
= c \left[\frac{(1-\theta)c}{\gamma}\right]^{\frac{1}{p-1}}\Vert x^k-\ov{y}^k\Vert+c v_k^{\frac{1}{p}}.
\end{align}
On the other hand, since $\frac{1}{p}<1$, from \eqref{eq:intrp:p01}, we get
\begin{align*}
v_k^{\frac{1}{p}}\leq\left[(1+2^{-p})\sigma\sum_{j=k}^{\infty}\delta_j^p\right]^{\frac{1}{p}}+\left[3\sum_{j=k}^{\infty}\varepsilon_j\right]^{\frac{1}{p}}
\leq\left[(1+2^{-p})\sigma\right]^{\frac{1}{p}}\sum_{j=k}^{\infty}\delta_j+3^{\frac{1}{p}}\sum_{j=k}^{\infty}\varepsilon_j^{\frac{1}{p}},
\end{align*}
and from \eqref{eq:ep-approx:oper},
\begin{equation*}
\sum_{j=k}^{\infty}\delta_j\leq \Vert x^k - \ov{x}^k\Vert \sum_{j=k}^{\infty}\beta_j\leq \ov{\beta}\Vert x^k - \ov{x}^k\Vert,\qquad
\sum_{j=k}^{\infty}\varepsilon_j^{\frac{1}{p}}\leq \Vert x^k - \ov{x}^k\Vert \sum_{j=k}^{\infty}\beta_j\leq \ov{\beta}\Vert x^k - \ov{x}^k\Vert.
\end{equation*}
These lead to
\begin{equation}\label{eq:th:linconv:k1}
v_k^{\frac{1}{p}}\leq \left(\left[(1+2^{-p})\sigma\right]^{\frac{1}{p}}+3^{\frac{1}{p}}\right)\ov{\beta}\Vert x^k - \ov{x}^k\Vert.
\end{equation}
Together with \eqref{eq:th:linconv:d2}, this implies
\begin{align}\label{eq:th:linconv:d3}
\Delta_k&\leq  c \left[\frac{(1-\theta)c}{\gamma}\right]^{\frac{1}{p-1}}\Vert x^k-\ov{y}^k\Vert+c\left(\left[(1+2^{-p})\sigma\right]^{\frac{1}{p}}+3^{\frac{1}{p}}\right)\ov{\beta}\Vert x^k - \ov{x}^k\Vert
\nonumber\\&\overset{(i)}{\leq}  c \left(1+\ov{\beta}\right)\left[\frac{(1-\theta)c}{\gamma}\right]^{\frac{1}{p-1}}\Vert x^k-\ov{x}^k\Vert+c\left(\left[(1+2^{-p})\sigma\right]^{\frac{1}{p}}+3^{\frac{1}{p}}\right)\ov{\beta}\Vert x^k - \ov{x}^k\Vert
\nonumber\\&= c_1 \Vert x^k - \ov{x}^k\Vert,
\end{align}
where $(i)$ follows from the inequalities
\begin{equation*}
\Vert x^k - \ov{y}^k\Vert\leq \Vert x^k - \ov{x}^k\Vert+\Vert\ov{x}^k - \ov{y}^k\Vert\leq \Vert x^k - \ov{x}^k\Vert+\delta_k\leq (1+\ov{\beta})\Vert x^k - \ov{x}^k\Vert,
\end{equation*}
and $c_1:= c\left[ \left(1+\ov{\beta}\right)\left[\frac{(1-\theta)c}{\gamma}\right]^{\frac{1}{p-1}}+\left(\left[(1+2^{-p})\sigma\right]^{\frac{1}{p}}+3^{\frac{1}{p}}\right)\ov{\beta}
\right]$.
\\
Moreover, from \eqref{eq:th:conKL:z4}, we have
\begin{align}\label{eq:th:conKL:z5}
  \Vert x^k - \ov{y}^k\Vert^p \leq \sigma^{-1}2^{p}\left(\Delta_k-\Delta_{k+1} \right)\left(\frac{1}{\gamma}\Vert x^k-\ov{y}^k\Vert^{p-1}+[\phi'(v_k)]^{-1}\right).
\end{align}
Note that in  \eqref{eq:th:conKL:z4}, we have the term $\widehat{m}:=\bs\max\{1, c_\phi\}$.  As explained before, when considering the KL property with an exponent, then $c_\phi=1$, which implies $\widehat{m}=1$.
Additionally, it holds that
\begin{equation*}
\Vert x^k - \ov{x}^k\Vert\leq \Vert x^k - \ov{y}^k\Vert+\Vert \ov{y}^k - \ov{x}^k\Vert\leq \Vert x^k - \ov{y}^k\Vert+\delta_k\leq \Vert x^k - \ov{y}^k\Vert+\omega
\Vert x^k - \ov{x}^k\Vert. 
\end{equation*}
Thus, $(1-\omega)\Vert x^k - \ov{x}^k\Vert\leq \Vert x^k - \ov{y}^k\Vert$. Together with \eqref{eq:th:linconv:k1}, this leads to
\begin{align}\label{eq:th:conKL:z6}
[\phi'(v_k)]^{-1}&=\frac{1}{c(1-\theta)}v_k^{\theta}=\frac{1}{c(1-\theta)}v_k^{\frac{p-1}{p}}
\nonumber\\&\leq \frac{\left(\left[(1+2^{-p})\sigma\right]^{\frac{1}{p}}+3^{\frac{1}{p}}\right)^{p-1}\ov{\beta}^{p-1}}{c(1-\theta)} \Vert x^k - \ov{x}^k\Vert^{p-1}
\leq c_2 \Vert x^k - \ov{y}^k\Vert^{p-1},
\end{align}
with $c_2:=\frac{\left(\left[(1+2^{-p})\sigma\right]^{\frac{1}{p}}+3^{\frac{1}{p}}\right)^{p-1}\ov{\beta}^{p-1}}{c(1-\theta)(1-\omega)^{p-1}}$.
From \eqref{eq:th:conKL:z5} and \eqref{eq:th:conKL:z6}, we obtain
\begin{align*}
  \Vert x^k - \ov{y}^k\Vert^p \leq \sigma^{-1}2^{p}\left(\Delta_k-\Delta_{k+1} \right)\left(\left(\frac{1}{\gamma}+c_2\right)\Vert x^k-\ov{y}^k\Vert^{p-1}\right),
\end{align*}
and thus,
\begin{align}\label{eq:th:conKL:z7}
 c_3 \Vert x^k - \ov{y}^k\Vert \leq  \Delta_k-\Delta_{k+1},
\end{align}
with $c_3=\sigma2^{-p}\left(\frac{1}{\gamma}+c_2\right)^{-1}$. 
Together with \eqref{eq:th:linconv:d3} and $(1-\omega)\Vert x^k - \ov{x}^k\Vert\leq \Vert x^k - \ov{y}^k\Vert$, this ensures
\begin{align*}
  B_k=\sum_{i\geq k}\Vert x^i - \ov{x}^i\Vert &\leq \frac{1}{1-\omega}\sum_{i\geq k} \Vert x^i - \ov{y}^i\Vert
\leq \frac{1}{c_3(1-\omega)}\sum_{i\geq k} \left(\Delta_i-\Delta_{i+1} \right)
\\
&\overset{(i)}{\leq} \frac{1}{c_3(1-\omega)}\Delta_k\leq c_4 \Vert x^k - \ov{x}^k\Vert\leq  c_4 \left(B_k-B_{k+1}\right),
\end{align*}
where $c_4=\frac{c_1}{c_3(1-\omega)}$ and $(i)$  follows from the fact that $\phi$ is continuous and $\fgam{\gf}{p}{\gamma}(x^k) - \fv + v_k\to 0$, which implies $\Delta_k \to 0$.
Thus, we have $B_{k+1}\leq \left(1-\frac{1}{c_4}\right)B_k$, demonstrating the desired asymptotic $Q$-linear convergence of the sequence $\{B_k\}_{k\in \Nz}$.
Hence, $\{x^k\}_{k\in \Nz}$, $\{\ov{x}^{k}\}_{k\in \Nz}$, and $\{\ov{y}^{k}\}_{k\in \Nz}$ converge R-linearly to a proximal fixed point.
\end{proof}

\begin{remark}
The linear convergence rate established in Theorem~\ref{th:linconv} concerns the \textit{outer-level} of ItsOPT (e.g., Boosted HiPPA). Since the proximal subproblems are solved inexactly, the total computational cost also depends on the number of inner iterations required to satisfy conditions \eqref{eq:ep-approx:dist}–\eqref{eq:ep-approx:fun}. In practice, these subproblems can be handled by inexpensive routines such as subgradient \cite{Davis2018, Rahimi2024}, Bregman gradient \cite{Ahookhosh24}, or BELLA \cite{Ahookhosh21} methods, which typically require only a modest number of iterations. A detailed complexity analysis of the inner iterations is beyond the scope of this paper and will be pursued in future work.
\end{remark}


\begin{corollary}[Global and linear convergence under the KL property of $\gf$]\label{cor:linconv}
Assume that the original function $\gf$ satisfies the KL property with an exponent $\theta\in (0, 1)$ at each point of $\Omega(x^k)$. By setting $p=\frac{1}{1-\theta}$ and under Assumption~\ref{assum:approx}, let $\{x^k\}_{k\in \Nz}$ and $\{\ov{x}^{k}\}_{k\in \Nz}$ be generated by Algorithm~\ref{alg:inexact} and $\{\ov{y}^{k}\}_{k\in \Nz}$ satisfy \eqref{eq:defybar}. 
If $\{x^k\}_{k\in \Nz}$ is bounded, for some constant $D\geq 0$, we have $\Vert d^k\Vert\leq D \Vert R_\gamma^{\varepsilon_k}(x^k)\Vert$ for all $k$,
and the sequences  $\{\varepsilon_k\}_{k\in \Nz}$ and $\{\delta_k\}_{k\in \Nz}$ satisfy Assumption~\ref{assum:linconv}, along with the conditions
\begin{equation*}
\sum_{k=0}^{\infty}\sum_{j\geq k}\delta_j<\infty, \quad \sum_{k=0}^{\infty}\sum_{j\geq k}\varepsilon_j^{\frac{1}{p}}<\infty,
\end{equation*}
then, all of these sequences converge globally and R-linearly to a proximal fixed point.
\end{corollary}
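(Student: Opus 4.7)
The plan is to reduce the statement to a direct application of Theorem~\ref{th:linconv} by (i) transferring the KL property from $\gf$ to the envelope $\fgam{\gf}{p}{\gamma}$ via Theorem~\ref{th:KLp:hope} and (ii) verifying that the prescribed summability on $\{\delta_k\}$ and $\{\varepsilon_k\}$ implies the abstract summability condition in Assumption~\ref{assum:eps}~\ref{assum:eps:c}. Since $p=\frac{1}{1-\theta}$ gives $\theta=\frac{p-1}{p}\in\left[\tfrac{p-1}{p},1\right)$, Theorem~\ref{th:KLp:hope}~\ref{th:KLp:hope:b} applies (recalling that the lower boundedness of $\gf$ in Assumption~\ref{assum:approx}~\ref{assum:approx:coer weak con} yields high-order prox-boundedness with $\gamma^{\gf,p}=+\infty$, so any $\gamma>0$ is admissible). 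Hence $\fgam{\gf}{p}{\gamma}$ enjoys the KL property on $\Omega(x^k)$ with the same exponent $\theta=\frac{p-1}{p}$, and the desingularizing function can be taken as $\phi(t)=c\,t^{1-\theta}=c\,t^{1/p}$ for some $c>0$. Because the KL property holds with an exponent, the quasi-additivity constant can be chosen as $c_\phi=1$, as noted after Lemma~\ref{lem:Uniformized KL property}. Moreover, Theorem~\ref{th:comcon}~\ref{th:comcon:funconstant} guarantees that $\fgam{\gf}{p}{\gamma}$ is constant on $\Omega(x^k)$, so the uniformized KL property is at our disposal.

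The main technical step is checking Assumption~\ref{assum:eps}~\ref{assum:eps:c} for this $\phi$. With $w_k:=\frac{2}{p\gamma}\sum_{j\ge k}\delta_j^p+3\sum_{j\ge k}\varepsilon_j$ and $\phi(t)=c\,t^{1/p}$, one computes $\phi'(t)=\frac{c}{p}\,t^{-(p-1)/p}$, so
\[
\bigl([\phi'(w_k)]^{-1}\bigr)^{\frac{1}{p-1}}=\left(\frac{p}{c}\right)^{\frac{1}{p-1}}w_k^{1/p}.
\]
Using the subadditivity inequality \eqref{eq:intrp:p01} and the elementary bound $(\sum a_j^p)^{1/p}\le\sum a_j$ for nonnegative terms (the $\ell^p$--$\ell^1$ norm inequality, valid for $p\ge 1$), I would estimate
\[
w_k^{1/p}\le \left(\frac{2}{p\gamma}\right)^{1/p}\!\!\sum_{j\ge k}\delta_j+3^{1/p}\!\!\sum_{j\ge k}\varepsilon_j^{1/p}.
\]
Summing over $k$ and invoking the hypotheses $\sum_{k}\sum_{j\ge k}\delta_j<\infty$ and $\sum_{k}\sum_{j\ge k}\varepsilon_j^{1/p}<\infty$ then yields $\sum_{k=0}^\infty ([\phi'(w_k)]^{-1})^{1/(p-1)}<\infty$, which is exactly Assumption~\ref{assum:eps}~\ref{assum:eps:c}. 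Note also that $p>1$ here, since $\theta\in(0,1)$.

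It remains to observe that Assumption~\ref{assum:eps}~\ref{assum:eps:b} requires $\Omega(x^k)\subseteq\Dom{\partial\fgam{\gf}{p}{\gamma}}$; this comes for free because $\fgam{\gf}{p}{\gamma}$ is real-valued and, by Theorem~\ref{th:comcon}~\ref{th:comcon:cluster}, every cluster point is a proximal fixed point whose subdifferential of HOME contains $0$ (see Remark~\ref{rem:onassum:eps}~\ref{rem:onassum:eps:b}). With Assumption~\ref{assum:eps} fully verified and Assumption~\ref{assum:linconv} in force by hypothesis, Theorem~\ref{th:linconv} applies directly and delivers the linear convergence of $\{x^k\}_{k\in\Nz}$, $\{\ov{x}^k\}_{k\in\Nz}$, and $\{\ov{y}^k\}_{k\in\Nz}$ to a common proximal fixed point. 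The only delicate point is keeping the arithmetic of the summability estimate clean; once the $\ell^p$--$\ell^1$ reduction is in place, the rest is a bookkeeping exercise invoking the earlier theorems.
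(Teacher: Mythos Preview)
Your proposal is correct and follows essentially the same route as the paper: transfer the KL exponent from $\gf$ to $\fgam{\gf}{p}{\gamma}$ via Theorem~\ref{th:KLp:hope}~\ref{th:KLp:hope:b}, compute $([\phi'(w_k)]^{-1})^{1/(p-1)}=(p/c)^{1/(p-1)}w_k^{1/p}$, bound $w_k^{1/p}$ by the double sums using subadditivity of $t\mapsto t^{1/p}$, and then invoke Theorems~\ref{th:conKL} and~\ref{th:linconv}. The paper's proof is slightly more terse and cites both theorems explicitly, but the argument is the same; your extra remarks on quasi-additivity and on $\Omega(x^k)\subseteq\Dom{\partial\fgam{\gf}{p}{\gamma}}$ are not in the paper's proof (the latter inclusion is treated as part of the standing Assumption~\ref{assum:eps}~\ref{assum:eps:b} rather than derived), so you may simply omit that discussion.
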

\begin{proof}
Let $\gf$ satisfy the KL property with the desingularizing function $\phi(t)=ct^{1-\theta}$ for some $c>0$
at each point of $\Omega(x^k)$.
Since $\theta\in (0, 1)$  and $p=\frac{1}{1-\theta}$, we obtain $\theta=\frac{p-1}{p}$ and $p>1$. By Theorem~\ref{th:KLp:hope}~$\ref{th:KLp:hope:b}$, the function $\fgam{\gf}{p}{\gamma}$ satisfies the KL property with the exponent $\theta$ on $\Omega(x^k)$.
Defining $w_k:=\frac{2}{p\gamma}\sum_{j=k}^{\infty}\delta_j^p+3\sum_{j=k}^{\infty}\varepsilon_j$ for $k\in \Nz$, we get
\begin{align*}
&\sum_{k=0}^{\infty} \left([\phi'(w_k)]^{-1}\right)^{\frac{1}{p-1}}
= \sum_{k=0}^{\infty} \left(\frac{w_k^\theta}{c(1-\theta)}\right)^{\frac{1}{p-1}}
=\sum_{k=0}^{\infty} \left(\frac{p}{c}w_k^{\frac{p-1}{p}}\right)^{\frac{1}{p-1}}
\\&=\sum_{k=0}^{\infty} \left(\frac{p}{c}\right)^{\frac{1}{p-1}}\left(\frac{2}{p\gamma}\sum_{j=k}^{\infty}\delta_j^p+3\sum_{j=k}^{\infty}\varepsilon_j\right)^{\frac{1}{p}}
\\&\leq \sum_{k=0}^{\infty} \left(\frac{p}{c}\right)^{\frac{1}{p-1}}\left((\frac{2}{p\gamma})^{\frac{1}{p}}
\sum_{j=k}^{\infty}\delta_j+3^{\frac{1}{p}}\sum_{j=k}^{\infty}\varepsilon_j^{\frac{1}{p}}\right)<\infty.
\end{align*}
Consequently, all parts of Assumption~\ref{assum:eps} are satisfied, and the claim follows from Theorems~\ref{th:conKL}~and~\ref{th:linconv}.
\end{proof}

It is notable that the state-of-the-art optimization methods achieve the linear convergence rate if $\theta\in \left(0,\tfrac{1}{2}\right]$, see, e.g., \cite{Ahookhosh21,attouch2010proximal,Li18,Themelis18,Yu2022}; however, the above results show that if $\varphi$ is a KL function with exponent $\theta\in (0,1)$, one can by leveraging a suitable regularization with $p=\frac{1}{1-\theta}$ guarantee the linear convergence of Boosted HiPPA. To the best of our knowledge, this is the first algorithm guaranteeing linear convergence for any KL function with exponent $\theta\in (0,1)$.

\section{Preliminary numerical experiments}
\label{sec:numerical}
Here, we report some preliminary numerical results to investigate the numerical behavior of Boosted HiPPA on some robust low-rank matrix recovery problems. To this end, we first establish some implementation issues that are necessary for an efficient implementation of this algorithm.

\subsection{Implementation issues}\label{sub:impiss}
This section addresses implementation issues related to the Boosted HiPPA described in Algorithm~\ref{alg:inexact}, which are presented next.

\begin{description}[wide, labelwidth=!, labelindent=0pt]
\item [1.] \textbf{(Approximation of HOPE)}  
We apply the subgradient method with geometrically decaying step-sizes (SG-DSS) for weakly convex functions \cite{Davis2018,Li20Low,Rahimi2024} to find a prox approximation $\prox{\gf}{\gamma}{p, \varepsilon_k}(x^k)$; see also \cite[Proposition~3.14]{Khanh24weak}.
Specifically, in the $k$th iteration of Boosted HiPPA, we perform $I_k$  iterations of  SG-DSS to solve the subproblem
 $\min_{y\in \R^n} \left(\gf(y)+\frac{1}{p\gamma}\Vert x^{k-1}- y\Vert^p\right)$ and obtain $\ov{x}^k$ as the solution.
For $k=1, 2$, we set $I_k=50$; for $3\leq k<20$, $I_k=300$; for $20\leq k<30$, $I_k=500$, and for each $k\geq 30$, $I_k=800$.
To update the step-size in SG-DSS, we employ the scheme $\alpha_i = \lambda q^i$, where $\lambda = 1$ and $q = 0.93$.
 Furthermore, we select the sequence $\{\varepsilon_k=\frac{1}{(k+1)^2}\}_{k\in \Nz}$ for the update step in \eqref{eq:alg:ingrad:upd}.
 
  \item [2.] \textbf{(Search direction)} 
 We define the search direction $d^k:=-\sigma_k R_\gamma^{\varepsilon_k}(x^k)$ at each iteration \cite{laCruz2006spectral}. To determine $\sigma_k$, we first introduce the constants $\sigma_{\text{min}} = 10^{-1}$ and $\sigma_{\text{max}} = 10^{10}$, and initialize with $\sigma_0 = 1$. Then, we compute
$\widehat{\sigma}_k=\frac{\langle s^k, s^k\rangle}{\langle s^k, y^k\rangle}$, where
$s^k=x^k - x^{k-1}$ and $y^k=R_\gamma^{\varepsilon_k}(x^k) - R_\gamma^{\varepsilon_{k-1}}(x^{k-1})$. If $\vert \widehat{\sigma}_k\vert\in [\sigma_{min}, \sigma_{max}]$, we set $\sigma_k=\vert \widehat{\sigma}_k\vert$. Otherwise, 
\begin{equation*}
  \sigma_k=\begin{cases}
                    1 &   \mbox{if } \Vert R_\gamma^{\varepsilon_k}(x^k)\Vert>1, \\
                    10^5 &  \mbox{if }  \Vert R_\gamma^{\varepsilon_k}(x^k)\Vert<10^{-5}, \\
                    \Vert R_\gamma^{\varepsilon_k}(x^k)\Vert^{-1} & \mbox{otherwise}.
                  \end{cases}
\end{equation*}

\item [3.]  \textbf{(Parameters of Boosted HiPPA)} We compare the values of $p\in \{1.25, 2, 3\}$, running the algorithm for $15$ and $60$ seconds for problems \eqref{eq:Experiments:optmodel} and \eqref{eq:Experiments:optmodel2}, respectively, in the following. The values of $\gamma$ and $\vartheta$   in Algorithm~\ref{alg:inexact} are determined through tuning and will be reported below. We set $\sigma =\frac{1}{1.1p\gamma}$.

\item [4.] \textbf{(Implementation and comparisons)} We compare Boosted HiPPA with SG-DSS, 
the subgradient method with constant stepsize (\textit{SG-CSS}) for stepsizes $\alpha\in\{0.01, 0.1, 1\}$, and Polyak subgradient method (\textit{SG-PSS}) for weakly convex functions \cite{Davis2018,Rahimi2024}. Furthermore, we compare our method with the high-order inexact proximal algorithm (\textit{HiPPA}) given as
 $x^{k+1}=\prox{\gf}{\gamma}{p, \varepsilon_k}(x^k)$. The algorithm was executed on a laptop with a 12th Gen Intel$\circledR$~Core$^{\text{TM}}$ i7-12800H CPU (1.80 GHz) and 16 GB of RAM, using MATLAB R2022a.
\end{description}

\begin{remark}
We emphasize that, although our theoretical results do not require explicit knowledge of how the high-order proximity problem \eqref{eq:Hiorder-Moreau prox} is solved, the practical implementation of our approach relies critically on the ability to compute approximate solutions to this subproblem efficiently. A natural class of methods for addressing this subproblem is subgradient-based algorithms; however, their convergence guarantees are generally limited in nonconvex settings. Notably, subgradient methods are known to converge for several important classes of nonconvex functions, including weakly convex functions \cite{Davis2018,Li20Low}, paraconvex functions \cite{Rahimi2024}, relative weakly convex functions \cite{rahimi2025subgradient}, upper-$\mathcal{C}^2$ functions \cite{aragon2025nonmonotone}, nonconvex path-differentiable functions \cite{bolte2023subgradient,bolte2022long}, weakly subdifferentiable functions \cite{dinc2021weak}, difference-of-convex functions \cite{Khamaru2018Convergence}, quasiconvex functions \cite{Kiwiel2001Convergence}, and tame functions \cite{bolte2025inexact,Davis2020Stochastic}. Although our numerical experiments in the next section focus on the weakly convex case, the proposed methods are applicable to a substantially broader class of nonconvex functions. As an example, let us consider the function $\gh:\R\to \R$ given by $\gh(x):=\bigl(1 - \max\{x, 0\}\bigr)^2$, which is nonsmooth and non-weakly-convex. Nevertheless, for any fixed $x$, the objective of the proximal subproblem
\[\Psi(y):=\bigl(1 - \max\{y, 0\}\bigr)^2+\frac{1}{p\gamma}\Vert x-y\Vert^p,\]
is a semialgebraic, i.e., the subgradient methods \cite{bolte2025inexact,Davis2020Stochastic} can be applied to this subproblem, generating convergent subsequences under suitable conditions.

\end{remark}

\subsection{Application to robust low-rank matrix recovery} \label{subsec:numerical:matrix}
Matrix recovery is a fundamental problem in signal processing, where the objective is to reconstruct matrices from incomplete or corrupted data. Robust low-rank matrix recovery, in particular, focuses on recovering a low-rank matrix from observations contaminated by sparse errors or noise. 
In this subsection, we demonstrate the application of Boosted HiPPA to address the problem of \textit{robust low-rank matrix recovery}, where outliers, i.e., corrupt the measurements, 
$
  y=\mathcal{A}(X)+s,
$
where $\mathcal{A}: \R^{n_1\times n_2}\to \R^m$ is a known linear operator, $X\in \R^{n_1\times n_2}$ is the matrix to be recovered, and $s\in \R^m$ represents outliers or noise. Such problems frequently arise in applications such as sensor calibration, face recognition, and video surveillance.

Due to the sensitivity of the loss function $\ell_2$ to outliers, an optimization problem of the form $\bs\min_{U\in \R^{n\times r}}\frac{1}{m}\Vert y - \mathcal{A}(UU^T)\Vert_2^2,$
where $X=UU^T$, may produce solutions that are perturbed away from the underlying low-rank matrix in the presence of outliers. In contrast, the $\ell_1$-loss function is more robust against such corruption. 
To address this issue, we consider two models from \cite{Li20Low}. Assuming $n_1 = n_2 = n$ and using the matrix factorization $X = UU^T$ with $U\in \R^{n\times r}$, the first model is formulated as:
\begin{equation}\label{eq:Experiments:optmodel}
    \mathop{\min}\limits_{U\in \mathbb{R}^{n\times r}} \Phi(U) = \frac{1}{m} \Vert y - \mathcal{A}(UU^T) \Vert_1.
\end{equation}
The second model, which uses the factorization $X = UV^T$ with $U \in \R^{n_1 \times r}$ and $V \in \R^{n_2 \times r}$, is formulated as:
\begin{equation}\label{eq:Experiments:optmodel2}
\mathop{\bs{\min}}\limits_{\substack{U \in \mathbb{R}^{n_1 \times r} \\ V \in \mathbb{R}^{n_2 \times r}}} 
\left\{ 
\Psi(U,V) := \frac{1}{m} \Vert y - \mathcal{A}(UV^T) \Vert_1 + \lambda \Vert U^T U - V^T V \Vert_F
\right\},
\end{equation}
where $\Vert \cdot \Vert_F$ denotes the Frobenius norm and $\lambda$ is a regularization parameter.

The parameters for \eqref{eq:Experiments:optmodel} and \eqref{eq:Experiments:optmodel2}, except for $n_1$ and $n_2$ in \eqref{eq:Experiments:optmodel2}, are chosen based on \cite{Li20Low}. Both problems are weakly convex, a subclass of prox-regular functions, as demonstrated in \cite[Propositions~3~and~6]{Li20Low}.
In both models,  $m = 5 r\bs\max\{n_1, n_2\}$ matrices $A_1, \ldots, A_m \in \R^{n_1 \times n_2}$ with i.i.d. standard Gaussian entries define the operator $\mathcal{A}$, and the measurements are computed as $y_i=\langle A_i, X_t\rangle+s_i$, where  $\langle A_i, X_t\rangle=\bs{\rm tr}(X_t^TA_i)$ and $\bs{\rm tr}(\cdot)$  denotes the trace operator. The outlier vector $s$ is generated by randomly selecting $o \times m$ entries, where $o = 0.3$ is the outlier ratio, and populating them with i.i.d. Gaussian values with mean $0$ and variance $100$. The remaining entries are set to $0$.

For model \eqref{eq:Experiments:optmodel}, we set $n=50$ and $r=5$, generating the true matrix $U_t\in \R^{n\times r}$  with i.i.d. standard Gaussian entries and defining $X_t = U_t U_t^T$. We choose $\gamma = 0.5$ and $\vartheta = 0.8$ in Algorithm~\ref{alg:inexact}. As shown in Figure~\ref{fig:ex:diffp}~(a), selecting $p = 1.25$ yields superior results. Consequently, we compare the performance of Boosted HiPPA with HiPPA and subgradient methods using these parameter settings, as illustrated in Figure~\ref{fig:ex:diffp}~(b). The results underscore the improved performance of Boosted HiPPA, alongside the effectiveness of HiPPA.
For model \eqref{eq:Experiments:optmodel2}, we set $n_1 = 50$, $n_2 = 40$, and $r = 5$. The true matrices $U_t \in \R^{n_1 \times r}$ and $V_t \in \R^{n_2 \times r}$ are generated with i.i.d. standard Gaussian entries, and we define $X_t = U_t V_t^T$. To determine $\lambda$, we first select $\delta = \frac{1}{6.3} \sqrt{\frac{2}{\pi}}$, which, together with the outlier ratio $o$, satisfies the conditions in \cite[Proposition~2]{Li20Low}. We then set
$
\lambda = \frac{2(1 - o)\left(\sqrt{\frac{2}{\pi}} - \delta\right) - \left(\sqrt{\frac{2}{\pi}} + \delta\right)}{2}.
$
We use $\gamma = 1$ and $\vartheta = 0.8$ in Algorithm~\ref{alg:inexact}. As illustrated in Figures~\ref{fig:ex:diffp2}~(a)~and~(b), Boosted HiPPA demonstrates superior performance compared to the other methods. 

In Subfigures~(a) and (c) of Figures~\ref{fig:ex:diffp} and \ref{fig:ex:diffp2}, we illustrate the norm of the approximated residual $\Vert R_\gamma^{\varepsilon_k}(x^k) \Vert$ for different $p$ values in HiPPA and Boosted HiPPA, applied to problems \eqref{eq:Experiments:optmodel} and \eqref{eq:Experiments:optmodel2}, respectively. As shown, Boosted HiPPA achieves smaller residual norms in fewer iterations compared to HiPPA, across all tested values of $p$. However, with respect to time, HiPPA for problem~\eqref{eq:Experiments:optmodel}  is faster than the boosted version as it needs fewer computations.
Moreover, the subfigures~(b) of Figures~\ref{fig:ex:diffp} and~\ref{fig:ex:diffp2} show a faster decrease in the objective values $\Phi(\cdot)$ and $\Psi(\cdot)$ for problems~\eqref{eq:Experiments:optmodel} and~\eqref{eq:Experiments:optmodel2}, respectively, when using Boosted HiPPA and HiPPA compared to the other considered subgradient methods, in terms of iteration count. Although subgradient methods are employed to solve the proximal subproblems, Subfigure~(d) of Figures~\ref{fig:ex:diffp} and~\ref{fig:ex:diffp2} illustrates that the CPU times of Boosted HiPPA and HiPPA are comparable to those of the subgradient methods. In particular, for problem~\eqref{eq:Experiments:optmodel}, HiPPA approaches the minimum value significantly earlier than Boosted HiPPA and SG-CCS ($\alpha=0.01$). These observations underscore the need for developing more effective strategies to address the proximal subproblems in inexact high-order proximal point methods, which we reserve for future research.

\begin{figure}[H]
    \begin{center}
        \subfloat[$\Vert R_\gamma^{\varepsilon_k}(x^k)\Vert$ vs. iterations for \eqref{eq:Experiments:optmodel}]
            {\includegraphics[width=5.9cm]{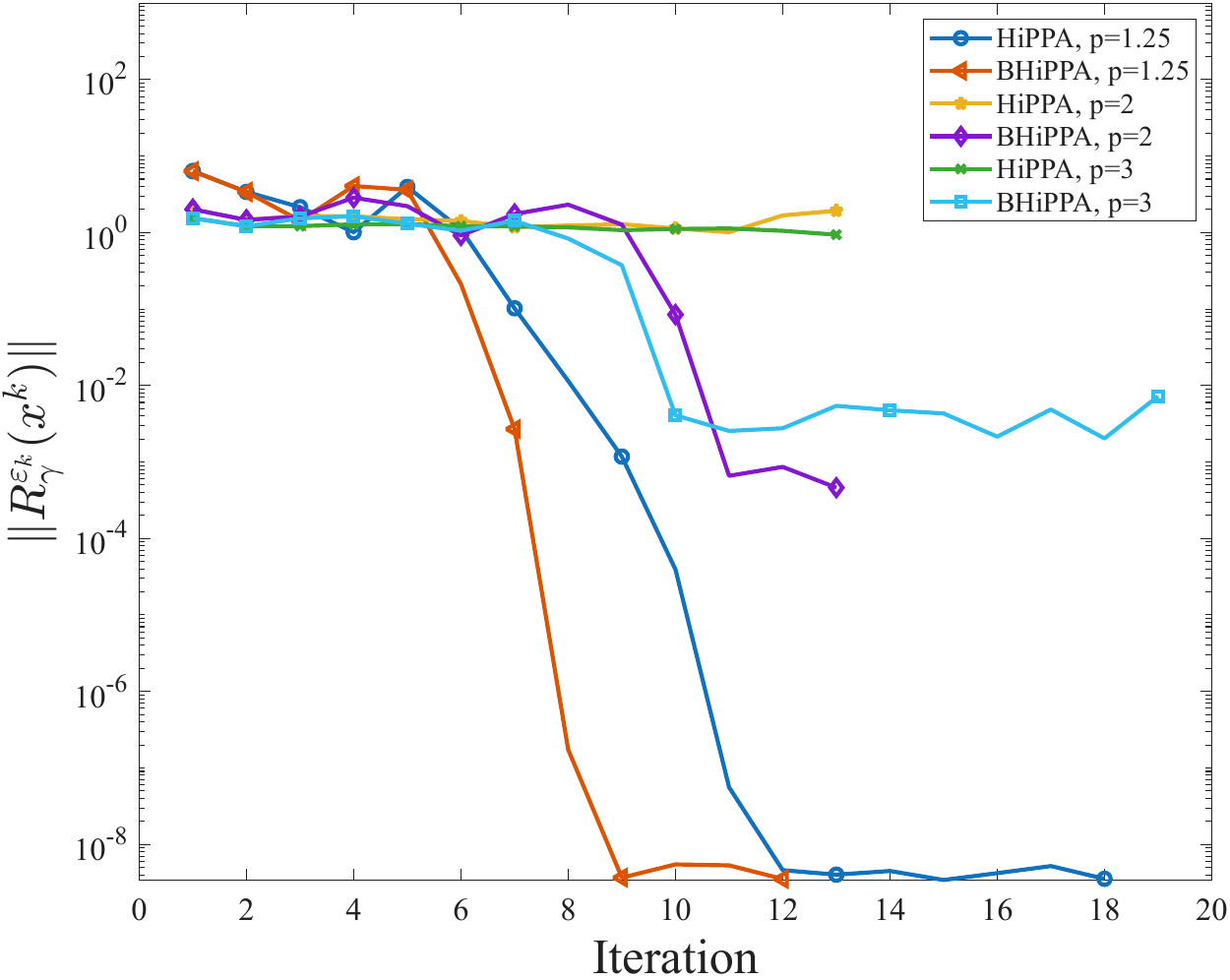}}\qquad\quad
            \subfloat[Function values vs. iterations for \eqref{eq:Experiments:optmodel}]
             {\includegraphics[width=5.9cm]{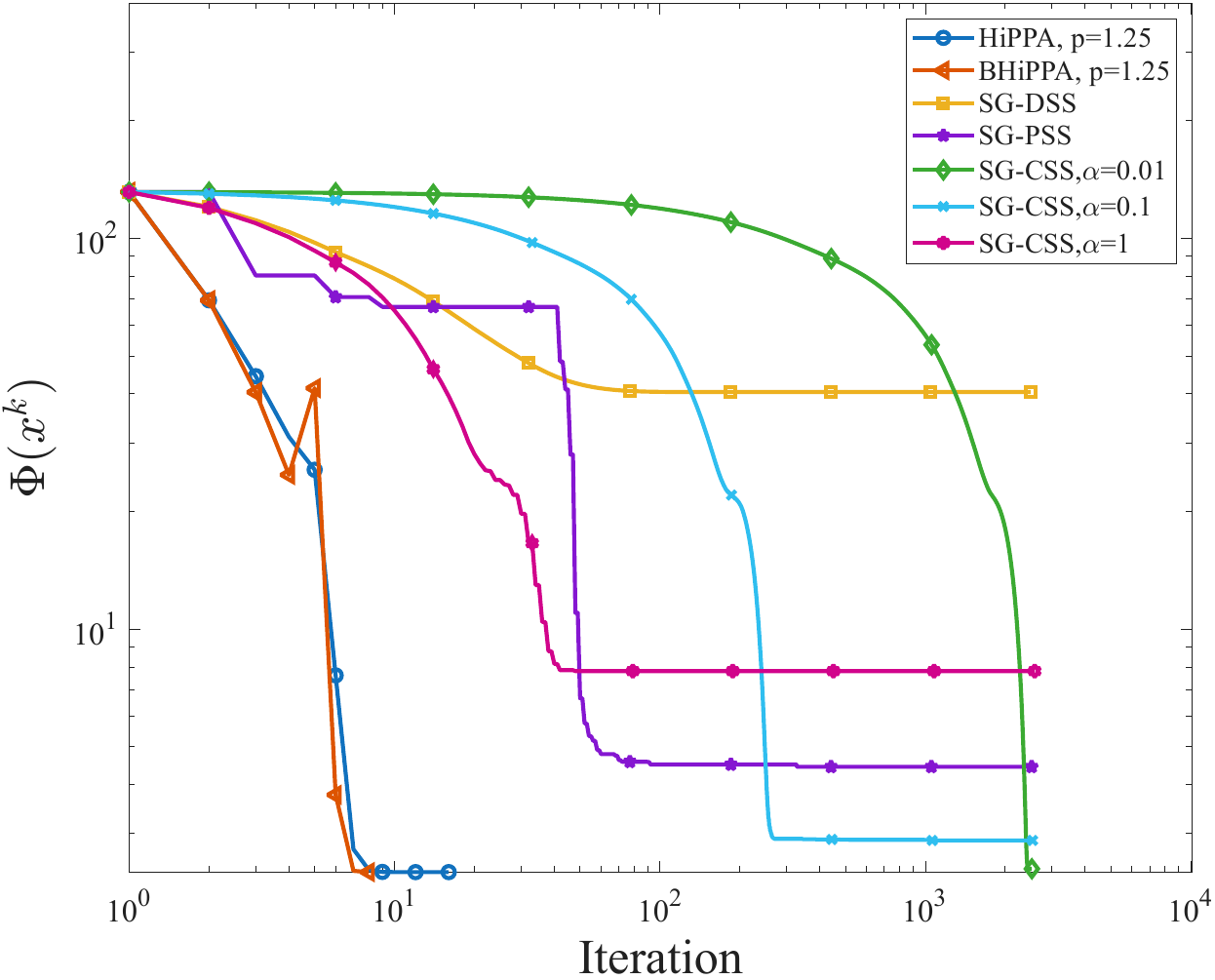}}\vspace{-2mm}\\
             \subfloat[$\Vert R_\gamma^{\varepsilon_k}(x^k)\Vert$ vs. time for \eqref{eq:Experiments:optmodel}]
            {\includegraphics[width=5.9cm]{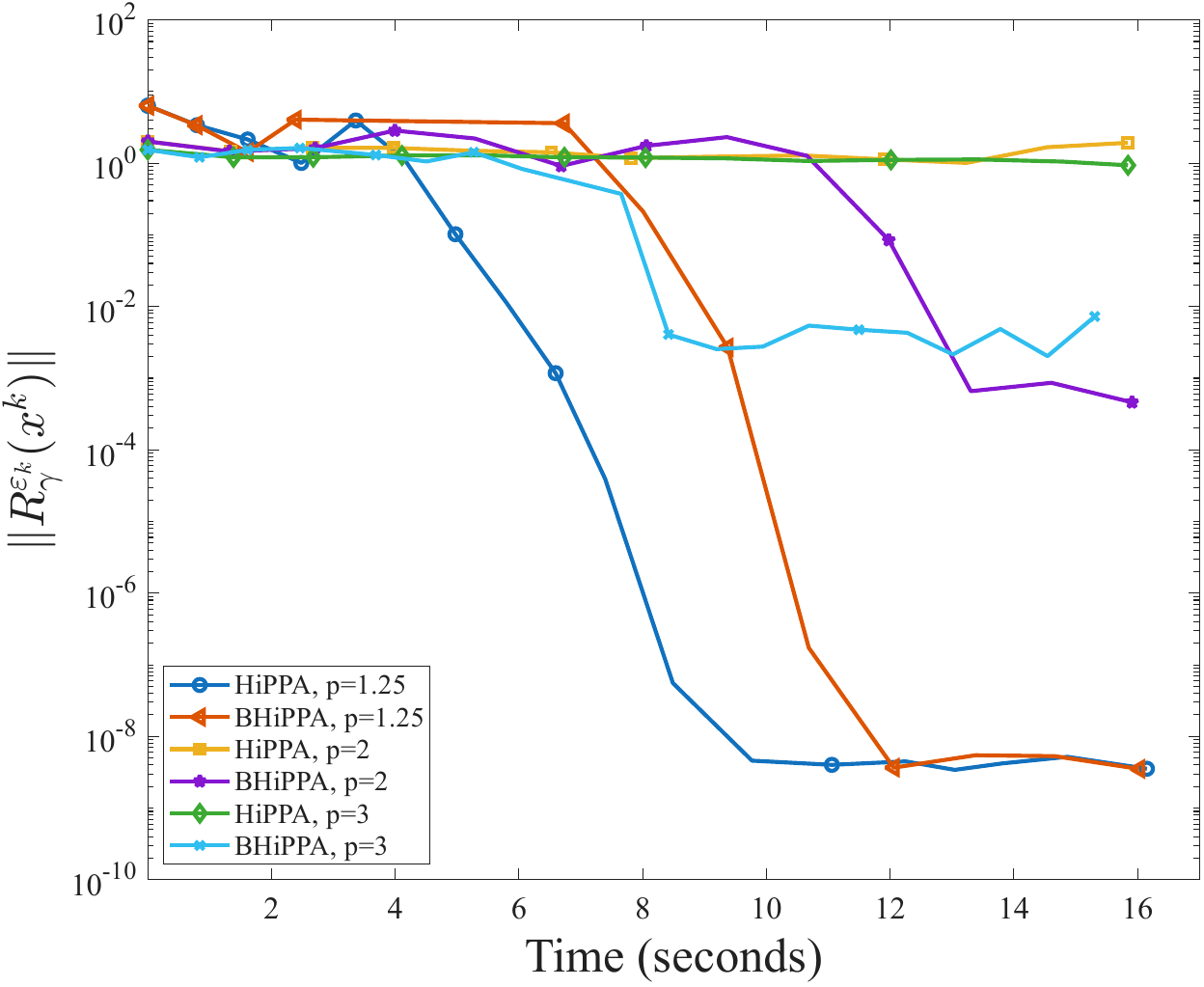}}\qquad\quad
            \subfloat[Function values vs. time for \eqref{eq:Experiments:optmodel}]
             {\includegraphics[width=5.9cm]{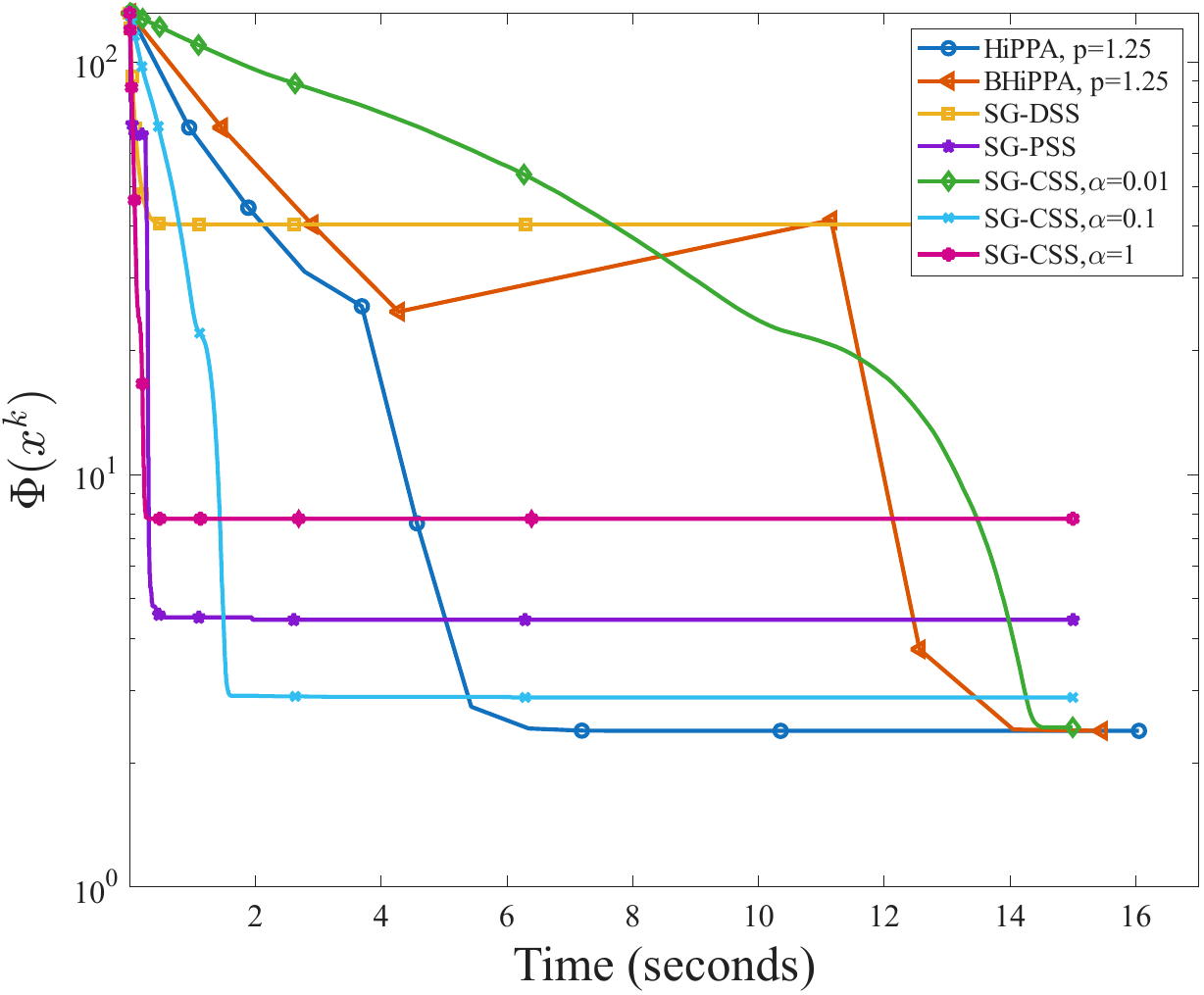}\vspace{-4mm}}        
            \caption{The sequences of residual $\left\{\Vert R_\gamma^{\varepsilon_k}(x^k)\Vert\right\}_{k\geq 0}$ and function values $\left\{\Phi(x^k)\right\}_{k\geq 0}$ versus outer iterations  and versus time for the problem \eqref{eq:Experiments:optmodel}.}
             \label{fig:ex:diffp}
    \end{center}    
\end{figure}

\begin{figure}[H]
    \begin{center}
 \subfloat[$\Vert R_\gamma^{\varepsilon_k}(x^k)\Vert$ vs. iterations for \eqref{eq:Experiments:optmodel2}]
            {\includegraphics[width=5.9cm]{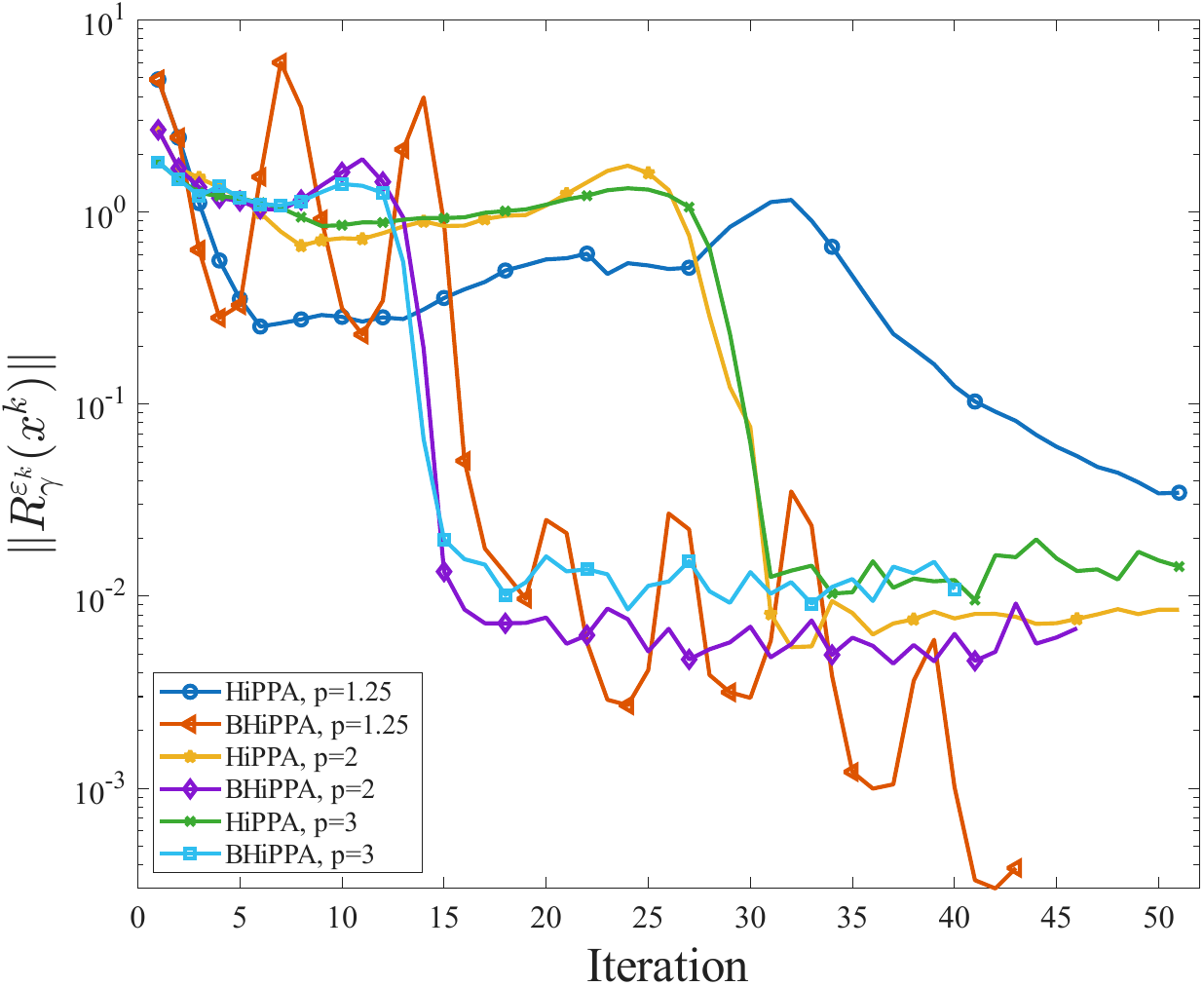}}\qquad\quad
            \subfloat[Function values vs. iterations for \eqref{eq:Experiments:optmodel2}]
             {\includegraphics[width=5.9cm]{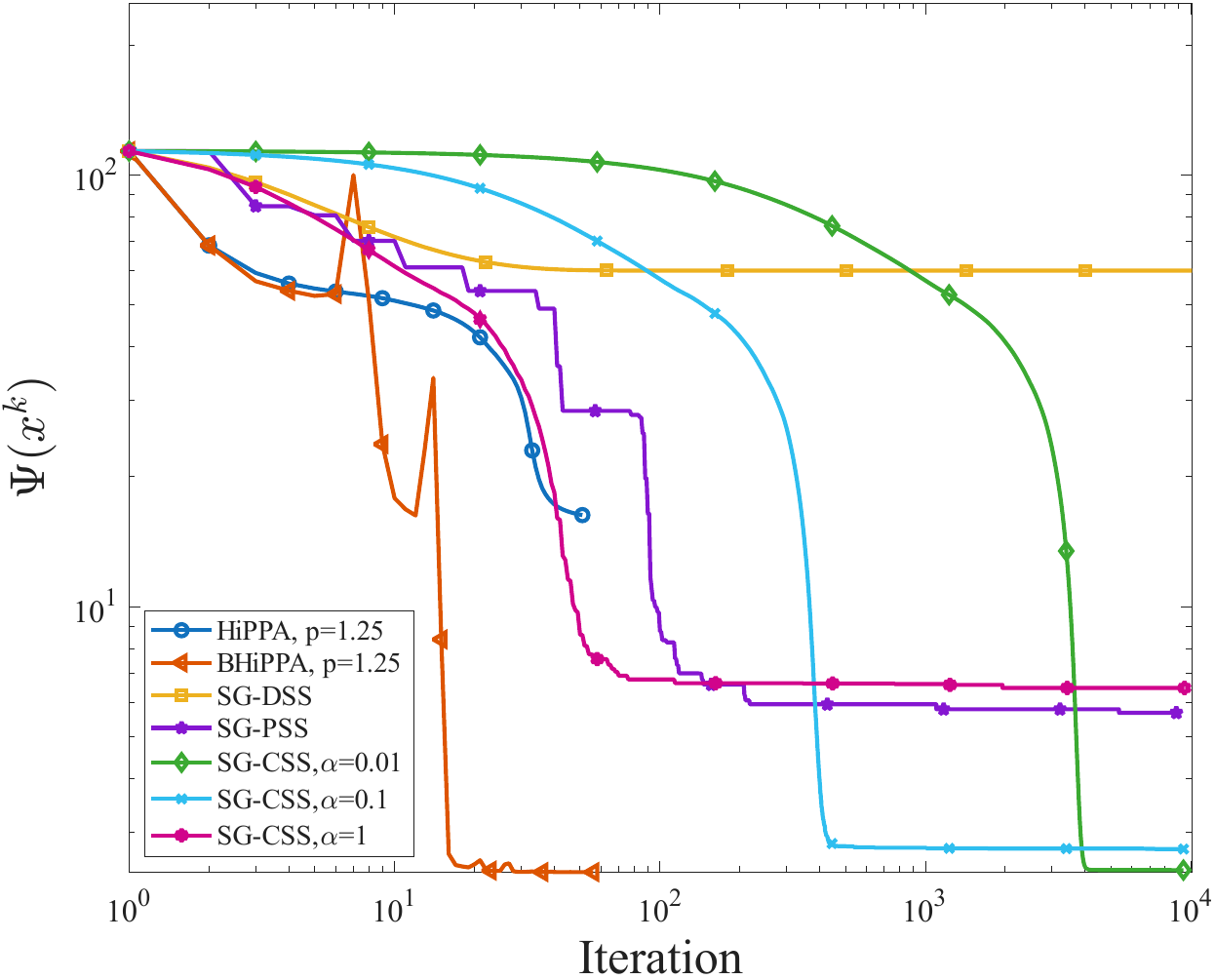}}  \\
             \subfloat[$\Vert R_\gamma^{\varepsilon_k}(x^k)\Vert$ vs. time for \eqref{eq:Experiments:optmodel2}]
            {\includegraphics[width=5.9cm]{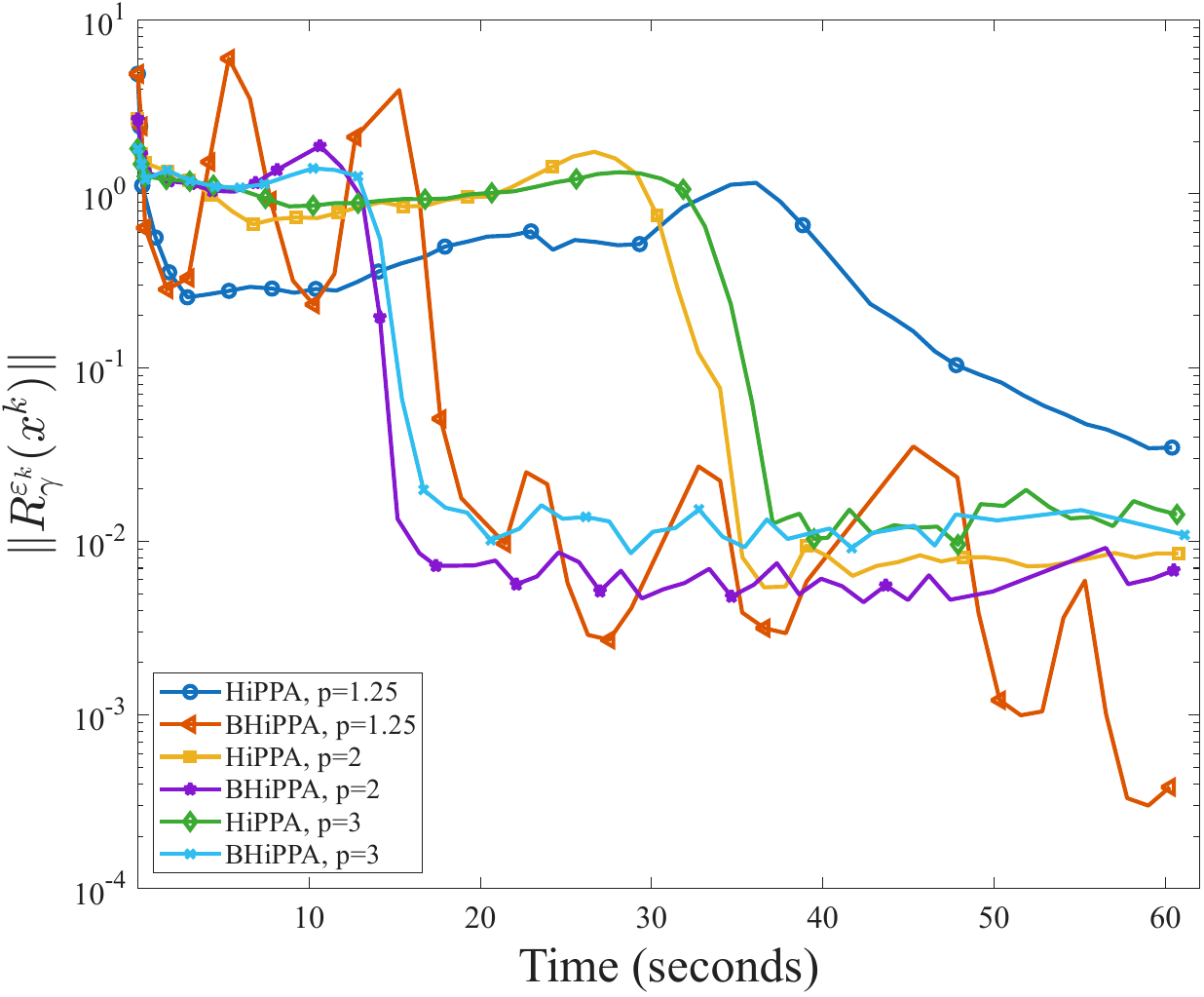}}\qquad\quad
            \subfloat[Function values vs. time for \eqref{eq:Experiments:optmodel2}]
             {\includegraphics[width=5.9cm]{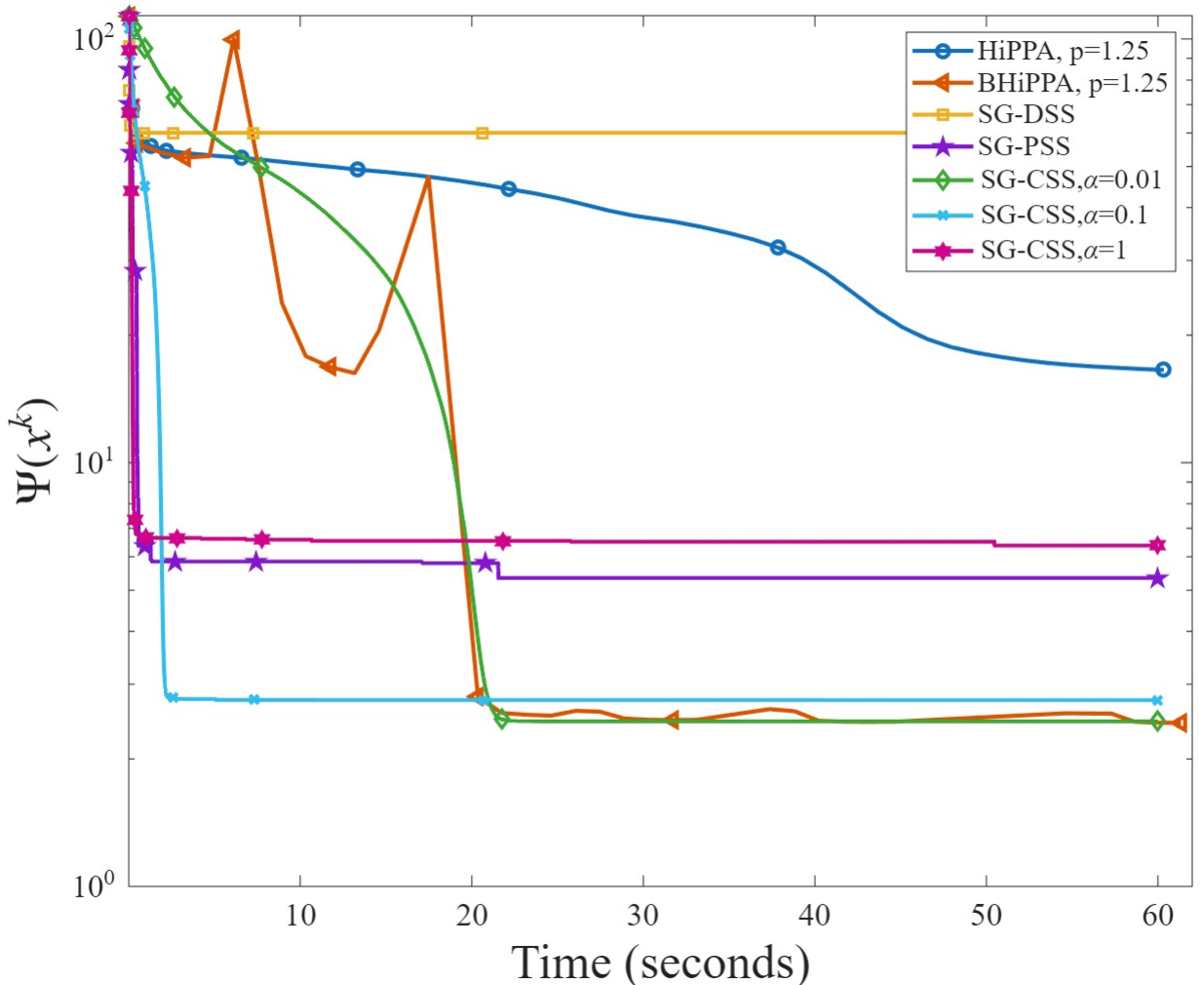}\vspace{-4mm}}
            \caption{The sequences of residual $\left\{\Vert R_\gamma^{\varepsilon_k}(x^k)\Vert\right\}_{k\geq 0}$ and function values $\left\{\Psi(x^k)\right\}_{k\geq 0}$ versus outer iterations  and versus time for the problem \eqref{eq:Experiments:optmodel2}.}
             \label{fig:ex:diffp2}
    \end{center}    
\end{figure}

\section{Conclusions} \label{sec:disc}
In this paper, we introduced an inexact two-level smoothing optimization (ItsOPT) framework to address a wide class of nonsmooth and nonconvex problems. This framework represents a crucial step toward applying proximal envelopes in situations where either the objective function or the proximity regularization is complicated. In such cases, the proximal auxiliary problem cannot be solved in closed form, and consequently, the associated envelope does not admit an exact oracle. Any algorithm within this framework consists of two methodological levels:
(i) at the lower level, the proximal auxiliary problem is solved approximately according to prescribed inexactness conditions, thereby generating an inexact oracle for the envelope;
(ii) at the upper level, zeroth-, first-, or second-order methods are applied based on the generated inexact oracle.

We focused in particular on the high-order proximal operator (HOPE) as the auxiliary problem and the high-order Moreau envelope (HOME) as the smoothing tool. We studied the key analytical properties of HOME and HOPE under proximity regularization of order $p>1$. Building on these results, we proposed the Boosted Inexact High-Order Proximal-Point Method (Boosted HiPPA), a line search acceleration of the inexact high-order proximal-point method, as the first zeroth-order natural algorithm arising from the ItsOPT framework.
We established subsequential convergence of Boosted HiPPA under reasonable assumptions on the quality of the inexact proximal step, and proved global and linear convergence under the KL property. Notably, when the original objective is a KL function with exponent $\theta \in (0, 1)$, setting  $p = \frac{1}{1-\theta}$ ensures linear convergence. To our knowledge, Boosted HiPPA is the first algorithm that achieves linear convergence for any KL function by appropriately leveraging proximal regularization. 

Finally, we note that some first-order methods in the sense of the ItsOPT framework have been developed in \cite{Kabganitechadaptive}. The development of second-order instances ItsOPT requires the second-order characterization of HOME, which constitutes an important direction for future research.




\bibliographystyle{siamplain}
\bibliography{references.bib}
\end{document}